\begin{document}
\newtheorem{lem}{Lemma}[section]
\newtheorem{prop}{Proposition}[section]
\newtheorem{cor}{Corollary}[section]
\numberwithin{equation}{section}
\newtheorem{thm}{Theorem}[section]

\theoremstyle{remark}
\newtheorem{example}{Example}[section]
\newtheorem*{ack}{Acknowledgments}

\theoremstyle{definition}
\newtheorem{definition}{Definition}[section]

\theoremstyle{remark}
\newtheorem*{notation}{Notation}
\theoremstyle{remark}
\newtheorem{remark}{Remark}[section]

\newenvironment{Abstract}
{\begin{center}\textbf{\footnotesize{Abstract}}%
\end{center} \begin{quote}\begin{footnotesize}}
{\end{footnotesize}\end{quote}\bigskip}
\newenvironment{nome}

{\begin{center}\textbf{{}}%
\end{center} \begin{quote}\end{quote}\bigskip}

\newcommand{\triple}[1]{{|\!|\!|#1|\!|\!|}}

\newcommand{\xx}{\langle x\rangle}
\newcommand{\ep}{\varepsilon}
\newcommand{\al}{\alpha}
\newcommand{\be}{\beta}
\newcommand{\de}{\partial}
\newcommand{\la}{\lambda}
\newcommand{\La}{\Lambda}
\newcommand{\ga}{\gamma}
\newcommand{\del}{\delta}
\newcommand{\Del}{\Delta}
\newcommand{\sig}{\sigma}
\newcommand{\ome}{\omega}
\newcommand{\Ome}{\Omega}
\newcommand{\C}{{\mathbb C}}
\newcommand{\N}{{\mathbb N}}
\newcommand{\Z}{{\mathbb Z}}
\newcommand{\R}{{\mathbb R}}
\newcommand{\T}{{\mathbb T}}
\newcommand{\Rn}{{\mathbb R}^{n}}
\newcommand{\Rnu}{{\mathbb R}^{n+1}_{+}}
\newcommand{\Cn}{{\mathbb C}^{n}}
\newcommand{\spt}{\,\mathrm{supp}\,}
\newcommand{\Lin}{\mathcal{L}}
\newcommand{\SSS}{\mathcal{S}}
\newcommand{\F}{\mathcal{F}}
\newcommand{\xxi}{\langle\xi\rangle}
\newcommand{\eei}{\langle\eta\rangle}
\newcommand{\xei}{\langle\xi-\eta\rangle}
\newcommand{\yy}{\langle y\rangle}
\newcommand{\dint}{\int\!\!\int}
\newcommand{\hatp}{\widehat\psi}
\renewcommand{\Re}{\;\mathrm{Re}\;}
\renewcommand{\Im}{\;\mathrm{Im}\;}
\def\11{{\rm 1~\hspace{-1.4ex}l} }

\title
[Invariant measures for BO]
{Invariant measures and long time behaviour for the  Benjamin-Ono equation III}
\author[Yu Deng]{Yu~Deng}
\author[Nikolay Tzvetkov]{Nikolay~Tzvetkov}
\author[Nicola Visciglia]{Nicola~Visciglia}
\address{Department of Mathematics, Princeton University, Princeton, NJ 08544},
\email{yudeng@math.princeton.edu}
\address{D\'epartement de Math\'ematiques, Universit\'e de Cergy-Pontoise, 2, 
avenue Adolphe Chauvin, 95302 Cergy-Pontoise  
Cedex, France and Institut Universitaire de France}\email{nikolay.tzvetkov@u-cergy.fr}
\address{Universit\`a Degli Studi di Pisa, Dipartimento di Matematica,
Largo B. Pontecorvo 5 I - 56127 Pisa. Italy}\email{ viscigli@dm.unipi.it}
\maketitle

\begin{abstract}
We complete the program developed in our previous works  aiming to construct an infinite sequence of invariant measures of gaussian type associated with the conservation laws of the Benjamin-Ono equation.
\end{abstract}
\section{Introduction}
Our goal here is to complete the program developed in our previous works   \cite{Deng, tz, TVens, TVimrn, TVjmpa}
aiming to construct an infinite sequence of invariant measures of gaussian type associated with the conservation laws of the Benjamin-Ono equation.

The Benjamin-Ono equation reads 
\begin{equation}\label{Bo}
\partial_t u+ {\mathcal H} \partial_x^2 u + u\partial_x u =0,
\end{equation}
where ${\mathcal H}$ denotes the Hilbert transform. We consider \eqref{Bo} with periodic boundary conditions, i.e. the spatial variable $x$ is on the one dimensional torus.
It is well known (see e.g. \cite{Mats}) that at least formally the solutions of \eqref{Bo} satisfy an infinite number of conservation laws of the form 
\begin{equation}\label{cons-laws}
E_{k/2}(u)= \|u\|_{\dot{H}^{k/2}}^2 + R_{k/2} (u), \quad k=0,1,2,\cdots, 
\end{equation}
where $R_{k/2}$  is a sum of terms homogeneous in $u$ of order larger or equal to three (but contains "less derivatives").
Despite of this remarkable algebraic property, which indicates that the Benjamin-Ono equation is "integrable", there are some 
important analytical difficulties (related to the non local nature of \eqref{Bo}) to develop the inverse scattering method for \eqref{Bo}. In particular we are aware of
no reference implementing integrability methods in the context of the periodic Benjamin-Ono equation.

As already noticed in our previous works, the conservation laws \eqref{cons-laws} may be used to construct invariant measures supported by Sobolev 
spaces of increasing smoothness. This in turn implies some insides on the long time behavior of the solution of \eqref{Bo}. Recall that the idea of using a conserved quantity to construct gaussian type invariant measures goes back to \cite{LRS}. It was then further developed by many authors including 
\cite{B,B2, BB, BB2,BB3, BT2,BTT,Deng0,Deng, NORS,OH,Richards,S,Tz_fourier,TVens,TVimrn,TVjmpa,zh}. 

Let us now briefly recall the construction of gaussian type measures associated with \eqref{cons-laws}.
These measures are absolutely continuous with respect to the gaussian measure $\mu_{k/2}$  induced 
by 
\begin{equation}\label{series}
\varphi_{k/2}(x, \omega)=\sum_{n\in \Z \setminus \{ 0\}} 
\frac{g_n(\omega)}{|n|^{k/2}} e^{{\bf i}nx},
\end{equation}
where, $(g_{n}(\omega))$ is a sequence of centered standard complex gaussian variables such that $g_{n}=
\overline{g_{-n}}$ and  
$(g_{n}(\omega))_{n>0}$ are independent.
For any $N\geq 1$, $k\geq 1$ and $R>0$ we introduce the function
\begin{equation}\label{density}
F_{k/2,N, R}(u)
=\Big(\prod_{j=0}^{k-2} \chi_R (E_{j/2}(\pi_N u)) \Big)
\chi_R (E_{(k-1)/2}(\pi_N u)-\alpha_N) 
e^{-R_{k/2}(\pi_N u)}
\end{equation}
where
$\alpha_N=\sum_{n=1}^N \frac {c}{n}$ for a suitable constant $c$, $\pi_N$ denotes the projector 
on Fourier modes $n$ such that $|n|\leq N$, $\chi_{R}$ is a cut-off function defined as $\chi_R(x)=\chi (x/R)$
with $\chi:\R \rightarrow \R$ a smooth,
compactly supported function such that $\chi(x)=1$ for every $|x|<1$.

It is proved in \cite{tz, TVens} that for every $k\in \N$ with $k\geq 1$ there exists a $\mu_{k/2}$ measurable function 
$F_{k/2,R }(u)$ such that $(F_{k/2,N, R}(u))_{N\geq 1}$ converges to $F_{k/2,R }(u)$ in $L^q(d\mu_{k/2})$ 
for every $1\leq q<\infty$. This 
in particular implies that  $F_{k/2,R }(u)\in L^q(d\mu_{k/2})$. 

Set $d\rho_{k/2,R}\equiv F_{k/2,R }(u)d\mu_{k/2}$.  Then we have that 
$$
\bigcup_{R>0}{\rm supp}(\rho_{k/2,R})={\rm supp}(\mu_{k/2})
$$
and one may conjecture that $\rho_{k/2,R}$ is invariant under a well defined flow of \eqref{Bo}.

This conjecture was proved for $k=1$ in \cite{Deng} and for $k\geq 4$ in \cite{TVimrn,TVjmpa}.
Our goal in this paper is treat the two remaining cases. 
\begin{thm}
The measures $d\rho_{1,R}$ and $d\rho_{3/2,R}$ are
invariant under the flow associated with
the Benjamin-Ono equation \eqref{Bo} established in \cite{M}.
\end{thm}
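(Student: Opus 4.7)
The plan is to implement the by now standard Bourgain--Zhidkov scheme, adapted to the low regularity setting of the two missing indices $k=2$ (i.e.\ $\rho_{1,R}$) and $k=3$ (i.e.\ $\rho_{3/2,R}$): establish invariance of a finite dimensional analogue of $\rho_{k/2,R}$ via Liouville's theorem and the conservation laws $E_{j/2}$, and then transfer the invariance to the Molinet flow $\Phi(t)$ of \eqref{Bo} from \cite{M} by a limiting argument that combines the $L^q(d\mu_{k/2})$ convergence of the densities from \cite{tz,TVens} with a probabilistic approximation of $\Phi(t)$ by a suitably truncated flow.

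The first step is to introduce the frequency truncated Benjamin--Ono equation
\begin{equation*}
\partial_t u_N + {\mathcal H}\partial_x^2 u_N + \pi_N\bigl(\pi_N u_N\cdot \partial_x \pi_N u_N\bigr)=0,
\end{equation*}
which, restricted to $\pi_N L^2(\T)$, is a finite dimensional Hamiltonian ODE; complemented by the identity on the orthogonal complement it defines a global smooth flow $\Phi_N(t)$. Each $E_{j/2}(\pi_N u)$ is exactly conserved by $\Phi_N$ for $0\le j\le k-1$, so Liouville together with these conservation laws yields the invariance under $\Phi_N(t)$ of the finite dimensional measure $F_{k/2,N,R}(u)\,d\mu_{k/2}$. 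The Wick type subtraction of the constant $\alpha_N=\sum_{n=1}^N c/n$ from $E_{(k-1)/2}(\pi_N u)$ in \eqref{density} is precisely what absorbs the logarithmically divergent quadratic piece arising from integration against $\mu_{k/2}$ at the cutoff scale, allowing the limit density $F_{k/2,R}$ to exist in every $L^q(d\mu_{k/2})$.

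The main obstacle is then to pass to the limit $N\to\infty$, and more precisely to prove that $\Phi_N(t,u)\to \Phi(t,u)$ in $\mu_{k/2}$-probability, uniformly on bounded time intervals. The measures $\mu_1$ and $\mu_{3/2}$ concentrate on $H^s$ with $s<1/2$ and $s<1$ respectively, regularities where \eqref{Bo} is genuinely quasilinear and no contraction argument is available. The approach is to derive an $N$-uniform local in time stability estimate for $\Phi_N(t,u)-\Phi(t,u)$ via a gauge transform in the spirit of \cite{M} combined with the modified energy machinery used for higher $k$ in \cite{TVimrn,TVjmpa} and the probabilistic estimates of \cite{Deng}. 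The finite dimensional invariance of Step~2, together with the $L^q$ convergence of the densities, then lets the local estimate be iterated on arbitrarily long time intervals, since the $\rho_{k/2,R}$-measure of the set where the stability breaks down on a unit time interval is controlled uniformly in $N$; a standard dyadic in time Borel--Cantelli then globalises the approximation. This is the step we expect to be the most delicate, especially at the lower regularity $s<1/2$ relevant to $\rho_{1,R}$.

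Granted these ingredients, the conclusion is routine: for any bounded continuous $f$ on $\mathrm{supp}(\mu_{k/2})$ the finite dimensional invariance reads
\begin{equation*}
\int f(\Phi_N(t,u))\,F_{k/2,N,R}(u)\,d\mu_{k/2}
=\int f(u)\,F_{k/2,N,R}(u)\,d\mu_{k/2},
\end{equation*}
and letting $N\to\infty$ via the $L^q$-convergence $F_{k/2,N,R}\to F_{k/2,R}$ on one side and the flow approximation on the other produces
\begin{equation*}
\int f(\Phi(t,u))\,F_{k/2,R}(u)\,d\mu_{k/2}
=\int f(u)\,F_{k/2,R}(u)\,d\mu_{k/2},
\end{equation*}
that is, the invariance of $d\rho_{k/2,R}$ under the Molinet flow for $k=2,3$. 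The identity $\bigcup_R\mathrm{supp}(\rho_{k/2,R})=\mathrm{supp}(\mu_{k/2})$ recorded above then extends the dynamical statement across the whole support of $\mu_{k/2}$, completing the program of \cite{Deng,tz,TVens,TVimrn,TVjmpa}.
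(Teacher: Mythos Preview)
Your proposal has a genuine gap at its very first step. You claim that ``each $E_{j/2}(\pi_N u)$ is exactly conserved by $\Phi_N$ for $0\le j\le k-1$, so Liouville together with these conservation laws yields the invariance under $\Phi_N(t)$ of the finite dimensional measure $F_{k/2,N,R}(u)\,d\mu_{k/2}$.'' This is wrong on two counts. First, the invariance of $F_{k/2,N,R}\,d\mu_{k/2}$ requires the conservation of $E_{k/2}(\pi_N u)$ itself (it supplies the exponential factor $e^{-R_{k/2}(\pi_N u)}$ combined with the Gaussian quadratic part), not merely of the lower $E_{j/2}$'s. Second, for the truncated Benjamin--Ono equation only $E_0$ and $E_{1/2}$ are exact conservation laws; already $E_1(\pi_N u)$ fails to be conserved by $\Phi_N$, and likewise $E_{3/2}$. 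This is precisely the ``first difficulty'' flagged in the introduction. So the finite dimensional measure you write down is \emph{not} invariant under $\Phi_N$, and the Bourgain--Zhidkov scheme in the form you describe does not even get off the ground.

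The paper proceeds quite differently. It replaces the sharp projectors $\pi_N$ by smoothed multipliers $S_N^\epsilon$ and works with the flow $\Phi_N^\epsilon$ of \eqref{modify}; this is essential because at regularity below $H^{1/2}$ (relevant for $\rho_{1,R}$) it is not known that the $\pi_N$-truncated flows approximate $\Phi$ at all, so your proposed stability step would likely fail as well. With the smoothed truncation one has a deterministic approximation $\|\Phi_N^\epsilon(t)\phi-\Phi(t)\phi\|\le C(\epsilon)N^{-\theta}$ via the gauge transform (Proposition~\ref{deterministic}), but the constants blow up as $\epsilon\to 0$. Since the modified energy $E_N^\epsilon$ is \emph{not} conserved by $\Phi_N^\epsilon$, the paper proves instead an \emph{almost invariance} statement (Proposition~\ref{deng}): $\big|\int_A F_{N,R}^\epsilon\,d\mu_1-\int_{\Phi_N^\epsilon(t)A}F_{N,R}^\epsilon\,d\mu_1\big|\le \delta t$, obtained by bounding $\|\frac{d}{dt}E_N^\epsilon(\Phi_N^\epsilon(t)\varphi)|_{t=0}\|_{L^2(d\mu_1)}$ through explicit Gaussian orthogonality computations and showing it vanishes in the double limit $N\to\infty$, $\epsilon\to 0$. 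The linear-in-$t$ form of this bound is what allows iteration over many short time intervals (whose length depends badly on $\epsilon$) without accumulating a nontrivial error; the modified energy machinery thus enters at the level of the measure evolution, not at the level of the flow stability as you suggest. Combining this with Proposition~\ref{approx} (convergence of the $\epsilon$-densities to the true density) and the deterministic approximation yields $\int_A d\rho_{1,R}\le\int_{\Phi(\bar t)A}d\rho_{1,R}$, and the same scheme handles $\rho_{3/2,R}$.
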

There are two main sources of difficulties to proof the invariance of  $\rho_{k/2,R}$. 
\\

The first one, presented only for $k\geq 2$, is that even if $E_{k/2}$ is conserved quantity for 
\eqref{Bo} it is no longer conserved by the approximated versions of \eqref{Bo}. This difficulty was resolved for $k\geq 4$ in \cite{TVimrn, TVjmpa} by introducing an argument exploiting in an essential way the random oscillations of the initial data. 
This approach however does not quite see the time oscillations coming from the dispersive
nature of \eqref{Bo}. These time oscillations are quantitively captured by the Bourgain spaces. 
We refer to \cite{NORS} where the time oscillations are used in an essential way in a related context, i.e. to resolve the problem coming from the 
lack of conservation for of the approximation versions of the original problem.  
\\

The second difficulty is the resolution of the Cauchy problem associated with \eqref{Bo} and its approximations on the support of $\rho_{k/2,R}$. For $k\geq 4$ this can be done by standard methods for solving quasilinear hyperbolic PDE's 
(the case $k=4$ is slightly more delicate and already appeals to a dispersive effect). 
For $k=2,3$ the resolution of \eqref{Bo} on the support of 
$\rho_{k/2,R}$ is a delicate issue which is resolved in \cite{M}. The case $k=1$ is even more delicate and was resolved in \cite{Deng} which in turn led to the invariance of 
 $\rho_{1/2,R}$ because in the case $k=1$ the first mentioned difficulty is absent.
\\
 
 In view of the above discussion, we see that the cases $k=2,3$ combine both difficulties and the aim of the present paper is to solve them simultaneously. 
 \\
 
Let us now explain briefly the main novelty in this paper.  
Recall that in the construction of invariant measures, in the infinite dimensional
situation, an important role is played by
a suitable and careful choice of a "good" family of finite dimensional approximating problems. 
In many situations this approximation
can be obtained by projection on the Fourier modes with at most $N$
frequencies, via the sharp Dirichlet projectors $\pi_N$, and letting $N\rightarrow \infty$.
It is however not quite clear whether in the case of the Benjamin-Ono equation
{\it at low level of regularity}  this family of finite dimensional problems
approximate well the true solution of the Benjamin-Ono equation.
To overcome this difficulty we use the idea of \cite{BT2,BTT,Deng} and we project
the equation by using a family of smoothed projectors $S_N^\epsilon$ with $\epsilon>0$ and $N\in \N$ (see the next pargraph). 
However, in contrast with the case treated in \cite{Deng},
where it is sufficient to work with a fixed parameter $\epsilon$ and letting $N\rightarrow \infty$, in the situation we face in this paper, 
it is important to consider both $\epsilon\rightarrow 0$ and $N\rightarrow \infty$, which requires a considerable care. In particular it is of crucial importance that
in Proposition~\ref{deng} below, we have a bound proportional to $t$ which enables to glue local bounds on time intervals with very poor dependence on $\varepsilon$.
In other words, the fact that we do not sacrify any time integration and that we only exploit the random oscillations of the initial data 
in the estimates on the measure evolution is of importance for the analysis in this paper.
\\

We conclude this introduction by fixing some notations.
We denote by ${\mathcal B}(X)$ the Borel sets of the topological space $X$ and by 
$B_M(Y)$ the ball of radius $M$, centered at the origin of a Banach space $Y$.
For every fixed $\epsilon\in (0,1)$ we denote by $\psi_\epsilon$ a smooth function
$\psi_\epsilon:\R\rightarrow \R$ such that
\begin{align}\label{psiepsilon}
\psi_\epsilon(x)=1 \hbox{ for } x\in [0, (1-\epsilon)],  \psi_\epsilon (x)=0 \hbox{ for }
x>1, \\\nonumber\|\psi_\epsilon\|_{L^\infty}=1 \hbox{
and } \psi_{\epsilon}(x)=\psi_\epsilon(|x|).
\end{align}
We denote by $S^\epsilon_N$ the Fourier multiplier:
\begin{equation}\label{SepsilonN}
S^\epsilon_N(\sum_{j\in \Z} a_j e^{ijx})=\sum_{j\in \Z}
a_j \psi_\epsilon(\frac jN) e^{ijx}.
\end{equation}
We also denote by $\Phi(t)$ the flow associated with \eqref{Bo}
(well-defined on $H^s$, $s\geq 0$ thanks to \cite{M}, see also \cite{MP}
for simpler proof)
and by $\Phi_N^\epsilon(t)$ the flow on $H^s$, $s\geq 0$ associated with 
\begin{align}\label{modify}
\partial_t u + {\mathcal H} \partial_x^2 u + S_N^\epsilon (S_N^\epsilon u \cdot S_N^\epsilon u_x)=0.
\end{align}
Since the $x$ mean value is conserved by the flow of \eqref{Bo}, we shall only consider solutions of \eqref{Bo} and of its approximated version \eqref{modify} with vanishing zero Fourier mode (this is the case in \eqref{series} as well).  
\\

{\bf Acknowledgement. }
N.T. is supported by ERC Grant Dispeq,  N.V. is supported by FIRB grant Dinamiche Dispersive.
\section{Deterministic Theory}
In this section we prove the following deterministic result.
\begin{prop}\label{deterministic}
Let $0<\epsilon<1$, $\sigma>\sigma'>0$ and $M>0$ be fixed, so that $\sigma$ is small enough. We have, for some $ T=T(\epsilon,\sigma,\sigma',M)>0$, $C= 
C(\epsilon,\sigma,\sigma',M)>0$ that:
\[\sup_{\phi\in B_{M}(H^{1/2-\sigma'})}\sup_{|t|\leq T}\|\Phi_{N}^{\epsilon}(t)\phi-\Phi(t)\phi\|_{H^{1/2-\sigma}}\leq C N^{-\theta},\] 
where  $\theta=\theta(\sigma,\sigma')>0$.
\end{prop}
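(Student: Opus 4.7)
The plan is to compare the two flows by a Gronwall-type argument carried out in the Bourgain-space framework used by Molinet in \cite{M} to solve \eqref{Bo} in $H^{s}$, $s\ge 0$. Write $u(t)=\Phi(t)\phi$, $u_N(t)=\Phi_N^{\epsilon}(t)\phi$, and $w=u_N-u$. Then $w$ satisfies $\partial_t w+{\mathcal H}\partial_x^2 w=-(\mathcal{E}_1+\mathcal{E}_2)$ with $w(0)=0$, where
\begin{equation*}
\mathcal{E}_1=S_N^{\epsilon}\bigl(S_N^{\epsilon}w\cdot S_N^{\epsilon}\partial_x u_N\bigr)+S_N^{\epsilon}\bigl(S_N^{\epsilon}u\cdot S_N^{\epsilon}\partial_x w\bigr)
\end{equation*}
is bilinear and linear-in-$w$, while
\begin{equation*}
\mathcal{E}_2=S_N^{\epsilon}\bigl(S_N^{\epsilon}u\cdot S_N^{\epsilon}\partial_x u\bigr)-u\partial_x u
\end{equation*}
is a pure consistency error depending only on the reference solution $u$.

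First I would establish uniform a priori bounds. From \cite{M}, $u$ stays in a ball of $H^{1/2-\sigma'}$ on an interval $[-T_0,T_0]$ with $T_0=T_0(M)$. Since $S_N^{\epsilon}$ commutes with the linear propagator and is bounded on every $H^s$, the same well-posedness scheme applied to \eqref{modify} yields analogous bounds on $u_N$, uniformly in $N$, on a (possibly $\epsilon$-dependent) interval $T=T(\epsilon,M)\le T_0$. Using that $I-S_N^{\epsilon}$ vanishes on frequencies $|n|\le(1-\epsilon)N$, one has
\begin{equation*}
\|(I-S_N^{\epsilon})f\|_{H^{1/2-\sigma}}\lesssim_{\epsilon} N^{-(\sigma-\sigma')}\|f\|_{H^{1/2-\sigma'}},
\end{equation*}
which, combined with the bilinear Bourgain-type estimate for the BO nonlinearity of \cite{M}, bounds $\mathcal{E}_2$ in the appropriate dual Bourgain norm by $C(\epsilon,M)N^{-\theta}$ for some $\theta=\theta(\sigma,\sigma')>0$.

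For the main error $\mathcal{E}_1$, the same bilinear estimates applied to the pairings $(w,u_N)$ and $(u,w)$, together with the Tao--Molinet gauge transformation used to absorb the derivative loss from $\partial_x u$ and $\partial_x u_N$, produce a Gronwall-type inequality of the schematic form
\begin{equation*}
\|w\|_{Y_T}\le C(\epsilon,M)\bigl(N^{-\theta}+T^{\nu}\|w\|_{Y_T}\bigr)
\end{equation*}
for some $\nu>0$, where $Y_T$ is the relevant Bourgain space restricted to $[-T,T]$. Choosing $T=T(\epsilon,\sigma,\sigma',M)$ small enough absorbs the last term, yielding $\|w\|_{Y_T}\le CN^{-\theta}$, from which the claimed bound in $L^{\infty}_tH^{1/2-\sigma}_x$ follows by the standard embedding.

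The principal obstacle is the derivative loss in $u\partial_x u$ at regularity below $H^{1/2}$: a naive energy method on $w$ loses a full derivative, so the gauge/Bourgain machinery of \cite{M} is essential, and must be applied consistently to both $u$ and $u_N$. The delicate point is controlling the commutators between $S_N^{\epsilon}$ and the nonlinear gauge: these produce terms that contribute $\epsilon$-dependent factors into $T$ and $C$ but, crucially, do not affect $\theta$. Keeping $\theta$ independent of $\epsilon$ is what makes this deterministic bound usable later when it must be glued, via Proposition~\ref{deng}, over many short time intervals as $\epsilon\to 0$ and $N\to\infty$ simultaneously.
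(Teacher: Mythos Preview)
Your high-level plan --- Bourgain spaces, Tao--Molinet gauge, short-time contraction --- is the right framework, and it is the one the paper uses. But there is a structural discrepancy between your sketch and the paper's argument that matters.

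You take the difference $w=u_N-u$ \emph{first} and then propose to ``apply the gauge'' to the equation for $w$ in order to absorb the derivative loss in $\mathcal{E}_1$. The paper does it the other way around: it applies a gauge $M^{N}=\exp(\tfrac{\mathrm{i}}{2}S_N^{\epsilon}P S_N^{\epsilon})$ to $u^{N}$ and the analogous gauge (with $S=\mathrm{Id}$) to $u^{\infty}$, obtaining gauged unknowns $w^{N}$ and $w^{\infty}$, derives for each of them an evolution equation \eqref{neweqn} whose nonlinearities $\mathcal{N}_2,\mathcal{N}_3$ carry \emph{no} derivative loss (the weight $\Lambda_2$ is bounded by $\min_j\langle n_j\rangle$), and only \emph{then} takes the difference $w^{N}-w^{\infty}$. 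Because the gauged equations are already tame, the difference estimate is, as the paper puts it, ``a standard procedure of taking differences''; the $N^{-\theta}$ factor enters through the discrepancy between the two gauges and the two equations, which lives at frequencies $\gtrsim N$.

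The gap in your route is the sentence ``the Tao--Molinet gauge transformation used to absorb the derivative loss from $\partial_x u$ and $\partial_x u_N$, produce a Gronwall-type inequality''. The gauge is a nonlinear operator depending on the solution itself; it does not act on the difference $w$ in any canonical way, and applying a single gauge (say based on $u$) to the equation for $w$ will remove the $u\,\partial_x w$ term but leave a residual $(u_N-u)\partial_x u_N$-type term that still loses a derivative at regularity below $H^{1/2}$. You would then need a second renormalization, or to switch to the paper's strategy of gauging each solution with its own gauge. Your sketch also omits the $Y^{s}$ component of the norm and the specific $X^{-1/2,12/25}$ control on $u$ that the paper needs to close the multilinear estimates for $\mathcal{N}_2$; these are not incidental, they are what makes the bootstrap close at this regularity. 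In short, the idea is right but the order of operations is inverted, and as written the contraction step for $\mathcal{E}_1$ does not close.
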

\subsection{The spaces}
Let the standard $X^{s,b}$ space be defined by\[\|u\|_{X^{s,b}}^{2}=\sum_{n\in\mathbb{Z}}\int_{\mathbb{R}}\langle n\rangle^{2s}\langle \tau-|n|n\rangle^{2b}|(\mathcal{F}_{x,t}u)(n,\tau)|^{2}\,\mathrm{d}\tau,\] and the $Y^{s}$ space be \[\|u\|_{Y^{s}}^{2}=\sum_{n\in\mathbb{Z}}\langle n\rangle^{2s}\bigg(\int_{\mathbb{R}}|(\mathcal{F}_{x,t}u)(n,\tau)|\,\mathrm{d}\tau\bigg)^{2}.\]We then define the space $U'$ by\[\|u\|_{U'}=\|u\|_{X^{-1/2,12/25}}+\|u\|_{Y^{1/2-\sigma'}},\] and $U$ is defined by replacing $\sigma'$ with $\sigma$. The space $(U')^{T}$ is defined by\[\|u\|_{(U')^{T}}=\sup\{\|v\|_{U'}:v|_{[-T,T]}=u|_{[-T,T]}\},\] while $U^{T}$ and $X^{s,b,T}$ are defined similarly.

In the proof below we will denote \[s=1/2-\sigma, \,\,\,s'=1/2-\sigma',\,\,\,r=1/2+\sigma=1-s.\] The norms we will control in the bootstrap estimate will be $X^{s',r,T}$ and $X^{s,r,T}$ for the gauged function $w$, and $(U')^{T}$ and $U^{T}$ for the original function $u$.
\subsection{Linear bounds}
The content of this section is well-known. We just record it here for the reader's convenience.
In the sequel $\chi\in C^\infty_0(\R)$ is a fixed function such that $\chi(t)=1 \hbox{ for } |t|<1$, $\chi(t)=0 \hbox{ for } |t|>2$.
\begin{prop}\label{linearbound}
We have the following bounds :\\

(1) the Strichartz estimates:
\[\|u\|_{L_{t,x}^{4}}\lesssim \|u\|_{X^{0,3/8}},\] and \[\|u\|_{L_{t,x}^{6}}\lesssim\|u\|_{X^{\sigma,r}}.\]

(2) the bound for the Duhamel evolution: 
 \[\|\mathcal{E}u\|_{X^{s,b}}\lesssim \|u\|_{X^{s,b-1}},\,\,\,\,\,\,0<b<1;\] where the Duhamel operator is defined by
 \[\mathcal{E}u(t)=\chi(t)\int_{0}^{t}\chi(s)e^{(t-s)H\partial_{xx}}u(s)\,\mathrm{d}s.\]

(3) the short-time bound: for $T\leq 1$ and $0<b<b'<1/2$ we have\[\|\chi(T^{-1}t)u\|_{X^{s,b}}\lesssim T^{b'-b}\|u\|_{X^{s,b'}}.\]

(4) fixed-time estimates: 
\[\sup_{t}\|u(t)\|_{H^{1/2-\sigma}}\lesssim\min(\|u\|_{X^{1/2-\sigma},1/2+\sigma},\|u\|_{Y^{1/2-\sigma}}).\]

(5) the linear bounds: 
 \[\|\chi(t)e^{\pm H\partial_{xx}}\phi\|_{X^{s,b}\cap Y^{s}}\lesssim \|\phi\|_{H^{s}}\] for any $s$ and $b$.
\end{prop}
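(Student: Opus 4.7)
The plan is to handle each of (1)--(5) as standard facts about the periodic Benjamin--Ono linear propagator, whose dispersion relation is $\tau = |n|n$, and to proceed from the cheapest items to the hardest.

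For (5), a direct computation gives $\mathcal{F}_{x,t}[\chi(t)e^{t\mathcal{H}\partial_x^2}\phi](n,\tau) = \hat\phi(n)\hat\chi(\tau - |n|n)$ for the sign matching the flow. The $X^{s,b}$ norm then factorizes as $\|\phi\|_{H^s}^2 \cdot \int\langle\sigma\rangle^{2b}|\hat\chi(\sigma)|^2\,d\sigma$, with the last factor finite for any $b$ because $\hat\chi$ is Schwartz; the $Y^s$ version reduces similarly to $\|\hat\chi\|_{L^1}$. For (4), writing $u(t,x) = \sum_n e^{inx}\int e^{it\tau}\hat u(n,\tau)\,d\tau$ and applying Cauchy--Schwarz in $\tau$ against the weight $\langle\tau-|n|n\rangle^{-1/2-\sigma}$ (square-integrable for $\sigma>0$) yields the $X^{1/2-\sigma,1/2+\sigma}$ bound, and $|\hat u(n,t)|\le\int|\hat u(n,\tau)|\,d\tau$ gives the $Y^{1/2-\sigma}$ bound immediately.

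For (2), I would conjugate by the linear group: setting $f(t,x) = e^{-t\mathcal{H}\partial_x^2}u(t,x)$, the Duhamel operator becomes $e^{t\mathcal{H}\partial_x^2}\chi(t)\int_0^t \chi(s)f(s)\,ds$, and since $X^{s,b}$ is unitarily invariant under the linear flow the estimate reduces to the one-variable fact that $f\mapsto\chi(t)\int_0^t\chi(s)f(s)\,ds$ sends $H^{b-1}_t\to H^b_t$ for $0<b<1$, a classical Bourgain/Kenig--Ponce--Vega calculation. For (3), $\chi(T^{-1}t)$ acts in time-frequency by convolution against $T\hat\chi(T\cdot)$, whose $L^1$-mass is $O(1)$ and essential width is $T^{-1}$; slicing by dyadic shells $|\tau-|n|n|\sim 2^j$, the convolution loses at most $\min(1, T\cdot 2^j)$ in each shell, and $\ell^2_j$-summability using $b<b'<1/2$ produces the claimed $T^{b'-b}$ gain.

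The main difficulty is the Strichartz pair (1). By the standard $TT^*$/Plancherel reduction, $\|u\|_{L^4_{t,x}}\lesssim \|u\|_{X^{0,3/8}}$ is equivalent to controlling, for each $(n,\tau)$, the number of representations $n=n_1+n_2$ with $|n_1|n_1+|n_2|n_2-|n|n$ in a fixed unit interval; this is the Bourgain/Molinet Benjamin--Ono Strichartz counting lemma, and the exponent $3/8$ arises from combining the counting bound with Cauchy--Schwarz in the modulation variables. The $L^6$ bound follows from an analogous triple-convolution counting estimate, whose mild logarithmic divergence is absorbed by the derivative loss $\sigma>0$ in $X^{\sigma,r}$. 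Since the authors characterize the whole proposition as well-known, I would cite these counting lemmas from the Benjamin--Ono Strichartz literature rather than reproduce them in detail.
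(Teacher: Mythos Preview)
Your proposal is correct and in fact supplies considerably more detail than the paper, which dispatches the entire proposition with a one-line citation (``These are well-known properties of $X^{s,b}$ spaces, see \cite{Tao}''). The sketches you give for (2)--(5) are the standard arguments one finds in that reference, and your reduction of (1) to the Bourgain-type counting lemmas for the dispersion relation $\tau=|n|n$ is the expected route; since the authors regard the whole proposition as routine background, your level of detail is already more than what they provide.
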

\begin{proof} These are well-known properties of $X^{s,b}$ spaces, see \cite{Tao}.
\end{proof}
\subsection{The gauge transform}\label{transform}
The use of gauge transforms (allowing to weak the impact of the derivative loss) in the context of the Benjamin-Ono equation was initiated by Tao \cite{Tao-BO}.
The computation in this section is a much simplified version of that in \cite{Deng}. We still need the notation from \cite{Deng}, namely that $m_{ij}$ represents the sum $m_{i}+\cdots +m_{j}$ for $i\leq j$.

In this section we are fixing an $\epsilon$ and an $N$; so we will denote $S_{N}^{\epsilon}$ simply by $S$ and $\psi_\epsilon$ by $\psi$. 
Let $u=\Phi_{N}^{\epsilon}(t)(\phi)$ be the global solution to (\ref{modify}) (with initial data $\phi$ of zero mean value). 
Let $z$ be the unique mean zero antiderivative of $u$, and consider the operators $P:g\mapsto(Sz)\cdot g$ and $Q:g\mapsto (Su)\cdot g$. By abusing notation we will also call them $Sz$ and $Su$. The exponential \[M=\exp\bigg(\frac{\mathrm{i}}{2}SPS\bigg)=\sum_{\lambda=0}^{\infty}\frac{1}{\lambda!}(\mathrm{i}/2)^{\lambda}(SPS)^{\lambda}\] is defined as a power series; we will then define \[v=Mu;\,\,\,\,\,\,\,\,w=\pi_{>0}(Mu).\]
The goal is to prove the following
\begin{lem} We have the evolution equation
\begin{equation}\label{neweqn}(\partial_{t}-\mathrm{i}\partial_{xx})w=\mathcal{N}_{2}+\mathcal{N}_{3}, \end{equation}where\[(\mathcal{N}_{2})_{n_{0}}=\sum_{\lambda\geq 0}C_{\lambda}\sum_{n_{0}=n_{1}+n_{2}+m_{1\lambda}}\Lambda_{2}\cdot w_{n_{1}}w_{n_{2}}\prod_{i=1}^{\lambda}\frac{u_{m_{i}}}{m_{i}},\] and \[(\mathcal{N}_{3})_{n_{0}}=\sum_{\lambda\geq 0}C_{\lambda}\sum_{n_{0}=n_{1}+n_{2}+n_{3}+m_{1\lambda}}\Lambda_{3}\cdot u_{n_{1}}u_{n_{2}}u_{n_{3}}\prod_{i=1}^{\lambda}\frac{u_{m_{i}}}{m_{i}}.\] Here $|C_{\lambda}|\lesssim C^{\lambda}/\lambda!$, some of the $w$ may be replaced by $\bar{w}$, and that\[|\Lambda_{2}|\lesssim\min(|n_{0}|,|n_{1}|,|n_{2}|),\,\,\,\,\,\,\,\,|\Lambda_{3}|\lesssim 1,\]
where we use the notation $v_{n}=\hat{v}(n)$. 
\end{lem}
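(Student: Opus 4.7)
The plan is to compute $\partial_{t}w=\partial_{t}\pi_{>0}(Mu)$ by differentiating the power series that defines $M$, extract dispersion via the positive-frequency projection, and then perform the Benjamin-Ono gauge cancellation to expose the claimed algebraic structure.

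First I would derive the equation for the antiderivative $z$. Since $u$ has zero mean, $z$ is well defined with zero mean; integrating \eqref{modify} once in $x$ yields
\[\partial_{t}z+\mathcal{H}\partial_{x}u+\tfrac{1}{2}S(Su)^{2}=0,\]
so that $\partial_{t}P=S\partial_{t}z$ splits into a linear (dispersive) piece $-S\mathcal{H}\partial_{x}u$ and a quadratic piece $-\tfrac{1}{2}S^{2}(Su)^{2}$. Both will feed into $\partial_{t}M$.

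Next, from $M=\sum_{\lambda}\tfrac{(\mathrm{i}/2)^{\lambda}}{\lambda!}(SPS)^{\lambda}$ and the fact that $SPS$ at different times does not commute with itself (because the multiplication operator $P$ does not commute with the Fourier multiplier $S$), I would compute
\[\partial_{t}M=\sum_{\lambda\geq 1}\frac{(\mathrm{i}/2)^{\lambda}}{\lambda!}\sum_{k=0}^{\lambda-1}(SPS)^{k}\,(S\,\partial_{t}P\,S)\,(SPS)^{\lambda-1-k}.\]
Combining $\partial_{t}v=M\partial_{t}u+(\partial_{t}M)u$ with $\partial_{t}u=-\mathcal{H}\partial_{x}^{2}u-S(Su\cdot Su_{x})$, rewriting $M\mathcal{H}\partial_{x}^{2}u=\mathcal{H}\partial_{x}^{2}v-[\mathcal{H}\partial_{x}^{2},M]u$, and using that $\mathcal{H}\partial_{x}^{2}=-\mathrm{i}\partial_{xx}$ on the range of $\pi_{>0}$, projection gives
\[(\partial_{t}-\mathrm{i}\partial_{xx})w=\pi_{>0}\bigl([\mathcal{H}\partial_{x}^{2},M]u-MS(Su\cdot Su_{x})+(\partial_{t}M)u\bigr).\]
The heart of the argument is then the gauge cancellation. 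At each level $\lambda$, the piece of $[\mathcal{H}\partial_{x}^{2},(SPS)^{\lambda}]u$ in which the top $\partial_{x}^{2}$ falls across exactly one factor of $Sz$ produces a contribution proportional to $S(Sz)_{x}S\cdot\partial_{x}(\cdot)=S(Su)S\cdot\partial_{x}(\cdot)$, and this cancels exactly the derivative-loss part of the corresponding term in $MS(Su\cdot Su_{x})$, together with the contribution from the linear piece of $\partial_{t}P$ inside $(\partial_{t}M)u$ at the adjacent level. After this cancellation every surviving commutator term carries its derivative on $Sz$ (yielding $Su$) rather than on $u$ itself, producing only bounded multiplier factors; this is the origin of $|\Lambda_{3}|\lesssim 1$. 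The quadratic piece of $\partial_{t}z$ combined with $M$ furnishes further cubic contributions. Substituting $Mu$ back in terms of $w$, $\bar w$ and monomials in $u$ and its antiderivative (the Fourier coefficients of $Sz$ supply the factors $u_{m_{i}}/m_{i}$), one reads off the stated form of $\mathcal{N}_{2}$ and $\mathcal{N}_{3}$, with $|C_{\lambda}|\lesssim C^{\lambda}/\lambda!$ inherited from the exponential series.

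The main obstacle is verifying the quantitative bounds $|\Lambda_{2}|\lesssim\min(|n_{0}|,|n_{1}|,|n_{2}|)$ and $|\Lambda_{3}|\lesssim 1$. While $|\Lambda_{3}|\lesssim 1$ follows from the classical Benjamin-Ono gauge miracle of \cite{Tao-BO} once one has tracked the derivative placement, the $\min$ bound on $\Lambda_{2}$ requires an additional symmetrization that uses positivity of $n_{1},n_{2}$ (since $w=\pi_{>0}v$) to redistribute the single surviving derivative onto the lowest-frequency factor. The smoothed projectors $S=S_{N}^{\epsilon}$ complicate the cancellation because $S$ does not commute with multiplication, but, as the paper indicates, the present computation is a simplified version of \cite{Deng} and one is spared the additional complication of subtracting a counterterm ($\alpha_{N}$) inside the gauge.
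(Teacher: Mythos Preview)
Your overall strategy---expand $M$ as a power series, commute, and perform the Tao-type gauge cancellation---is the paper's, and your displayed identity $(\partial_{t}-\mathrm{i}\partial_{xx})w=\pi_{>0}\bigl([\mathcal{H}\partial_{x}^{2},M]u-MS(Su\cdot Su_{x})+(\partial_{t}M)u\bigr)$ is correct. But your description of the cancellation has a real gap. You speak of the piece of $[\mathcal{H}\partial_{x}^{2},(SPS)^{\lambda}]u$ ``in which the top $\partial_{x}^{2}$ falls across exactly one factor of $Sz$'', as if $\mathcal{H}\partial_{x}^{2}$ obeyed a Leibniz rule. It does not: $\mathcal{H}$ is a Fourier multiplier, not a derivation, and $[\mathcal{H},M]$ is nontrivial. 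Concretely $\mathcal{H}\partial_{xx}=-\mathrm{i}\partial_{xx}+2\mathrm{i}\pi_{<0}\partial_{xx}$, so that
\[
\pi_{>0}[\mathcal{H}\partial_{xx},M]u=-\mathrm{i}\,\pi_{>0}[\partial_{xx},M]u-2\mathrm{i}\,\pi_{>0}M\pi_{<0}u_{xx}.
\]
The second term on the right carries two full derivatives on $u$ and is \emph{not} removed by the gauge cancellation or by $(\partial_{t}M)u$; it is exactly the paper's line \eqref{line1}, written as $-2\mathrm{i}\,\pi_{>0}\partial_{x}(M\pi_{<0}u_{x})$. Its treatment is a separate frequency argument: since $n_{0}>0>n_{1}$ in that sum, some $m_{i}$ in the $M$-expansion must satisfy $|m_{i}|\geq |n_{0}|+|n_{1}|$, which after relabeling converts the weight $n_{0}n_{1}$ into $\lesssim\min(\langle n_{0}\rangle,\langle n_{1}\rangle,\langle n_{2}\rangle)$. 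This sign-forcing mechanism---not a ``symmetrization using positivity of $n_{1},n_{2}$''---is the actual origin of the $\mathcal{M}_{2}$ structure and of the $\min$-bound on $\Lambda_{2}$.

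Relatedly, your assertion that after cancellation ``every surviving commutator term carries its derivative on $Sz$ \ldots\ producing only bounded multiplier factors'' is not accurate: the surviving terms split into an $\mathcal{M}_{2}$ part (weight $\sim\min\langle n_{j}\rangle$, which can be large) and an $\mathcal{M}_{3}$ part (bounded weight). The paper then performs a second step you only allude to: in $\mathcal{M}_{2}$ one substitutes $u=M^{-1}v$ on the two distinguished factors $u_{n_{1}},u_{n_{2}}$, and when the resulting index changes sign ($n_{1}n_{3}\le 0$) one again uses the large-$m_{i}$ observation to push that piece into $\mathcal{N}_{3}$, while the sign-preserving piece becomes $\mathcal{N}_{2}$ with $w$ or $\bar w$ in place of $v$. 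Your outline needs both of these frequency/sign arguments made explicit; without them the bounds $|\Lambda_{2}|\lesssim\min$ and $|\Lambda_{3}|\lesssim 1$ are not established.
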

\begin{proof}
The evolution equation satisfied by $w$ can be computed as follows:
\begin{eqnarray}
(\partial_{t}-\mathrm{i}\partial_{xx})w&=&\pi_{>0}M(\partial_{t}-\mathrm{i}\partial_{xx})u+\pi_{>0}[\partial_{t},M]u-\mathrm{i}\pi_{>0}[\partial_{xx},M]u\nonumber\\
&=&-2\mathrm{i}\pi_{>0}(M\pi_{<0}u_{xx})+\pi_{>0}\big([\partial_{t},M]u-\mathrm{i}\big[\partial_{x},[\partial_{x},M]\big]u\big)\nonumber\\
&+&\pi_{>0}(-MS(Su\cdot Su_{x})-2\mathrm{i}[\partial_{x},M]u_{x})\nonumber\\
\label{line1}&=&-2\mathrm{i}\pi_{>0}\partial_{x}(M\pi_{<0}u_{x})\\
\label{line2}&-&2\mathrm{i}\pi_{>0}([\partial_{x},M]-\frac{\mathrm{i}}{2}M(SQS))u_{x}\\
\label{line3}&+&2\mathrm{i}\pi_{>0}[\partial_{x},M]\pi_{<0}u_{x}+\pi_{>0}\big([\partial_{t},M]-\mathrm{i}\big[\partial_{x},[\partial_{x},M]\big]\big)u.
\end{eqnarray}

Expanding $M$ as the power series and writing in Fourier space, we see that the term in (\ref{line1}) has the form $\mathcal{M}_{2}$, where
\[(\mathcal{M}_{2})_{n_{0}}=\sum_{\lambda\geq 0}C_{\lambda}\sum_{n_{1}+m_{1,\lambda}=n_{0},n_{0}>0>n_{1}}(n_{0}n_{1}u_{n_{1}})\cdot \Phi\cdot\prod_{i=1}^{\lambda}\frac{u_{m_{i}}}{m_{i}},\] where $|C_{\lambda}|\leq C^{\lambda}/\lambda!$,  and $\Phi$ is a bounded factor.
Notice that one of $m_{i}$ must be at least $|n_{0}|+|n_{1}|$ in size, we can rearrange the indices and rewrite this as \[(\mathcal{M}_{2})_{n_{0}}=\sum_{\lambda\geq 0}C_{\lambda}\sum_{n_{1}+n_{2}+m_{1,\lambda}=n_{0}}\Lambda\cdot u_{n_{1}}u_{n_{2}}\cdot\prod_{i=1}^{\lambda}\frac{u_{m_{i}}}{m_{i}},\] where $\Lambda$ verifies the bound\[|\Lambda|\leq \min(\langle n_{0}\rangle,\langle n_{1}\rangle,\langle n_{2}\rangle).\]

Next, to analyze the term in (\ref{line2}), notice that $[\partial_{x},SPS]=SQS$, we have that\[[\partial_{x},M]-\frac{\mathrm{i}}{2}M(SQS)=R,\] where\[R=\sum_{\lambda,\mu\geq 0}\frac{1}{(\lambda+\mu+1)!}(\mathrm{i}/2)^{\lambda+\mu+1}(SPS)^{\lambda}[SQS,(SPS)^{\mu}].\] The last commutator can be written as a power of $SPS$, multiplied by\[[SPS,SQS]=SP[S^{2},Q]S+SQ[P,S^{2}]S,\] multiplied by another power of $SPS$. We will only consider the first term, since the second one is similar. First we may commute $\partial_{x}$ with a power of $SPS$ and move it left; since $[\partial_{x},P]=Q$, the error term will be of form $\mathcal{M}_{3}$, where \[(\mathcal{M}_{3})_{n_{0}}=\sum_{\lambda\geq 0}C_{\lambda}\sum_{n_{1}+n_{2}+n_{3}+m_{1,\lambda}=n_{0}}\Lambda\cdot u_{n_{1}}u_{n_{2}}u_{n_{3}}\cdot\prod_{i=1}^{\lambda}\frac{u_{m_{i}}}{m_{i}},\] where $|\Lambda|\lesssim 1$.

Now, let $v=(SPS)^{\mu-1}u$ for some $\mu$, we have \[([S^{2},Q]\partial_{x}v)_{n_{0}}=\mathrm{i}\sum_{n_{0}=n_{1}+m_{1}}n_{1}\big(\psi^{2}(n_{0}/N)-\psi^{2}(n_{1}/N)\big)\psi(m_{1}/N)u_{m_{1}}v_{n_{1}}.\] Plugging in the expression of $v_{n_{1}}$ in terms of $u$, we obtain that \[([S^{2},Q]\partial_{x}v)_{n_{0}}=\mathrm{i}\sum_{n_{0}=n_{2}+m_{1,\mu}}n_{1}\big(\psi^{2}(n_{0}/N)-\psi^{2}(n_{1}/N)\big)\psi(m_{1}/N)u_{m_{1}}u_{n_{2}}\prod_{i=2}^{\mu}\frac{u_{m_{i}}}{m_{i}},\] where $n_{1}=n_{2}+m_{2,\mu}$. Now in this sum, if $\langle m_{i}\rangle\gtrsim\langle n_{1}\rangle$ for some $i\geq 2$, it would be of form $\mathcal{M}_{3}$; otherwise, if $\langle n_{0}\rangle\gtrsim \langle n_{1}\rangle$, it would be of form $\mathcal{M}_{2}$, and if $\langle n_{0}\rangle\ll \langle n_{1}\rangle$, then we have $\langle n_{0}\rangle\ll\langle n_{2}\rangle $ also, so by swapping $n_{2}$ and $m_{1}$ and using symmetry, we find that this term will be of form $\mathcal{M}_{2}$ also.

Next we consider the second term in (\ref{line3}). Recall from Leibniz rule that\[[\partial_{x},M]=\sum_{\lambda,\mu\geq 0}\frac{1}{(\lambda+\mu+1)!}(\mathrm{i}/2)^{\lambda+\mu+1}(SPS)^{\lambda}(SQS)(SPS)^{\mu}.\] If we then commute this with $\partial_{x}$ again and the commutator hits one $SPS$ factor, we will get the same cubic term as $\mathcal{M}_{2}$ above. Therefore, let \[y=\partial_{t}F-\mathrm{i}\partial_{x}u,\] we only need to consider the part \[\sum_{\lambda,\mu\geq 0}\frac{1}{(\lambda+\mu+1)!}(\mathrm{i}/2)^{\lambda+\mu+1}\pi_{>0}(SPS)^{\lambda}(S(Sy)S)(SPS)^{\mu}u.\] We then have from our equation that\[y=-2\mathrm{i}\pi_{<0}u_{x}-\frac{1}{2}\pi_{\neq 0}S(Su)^{2}.\] The second term in the above equation corresponds to a term of form $\mathcal{M}_{3}$; for the first term above, we will combine it with the first term of line (\ref{line3}) to obtain (here we omit the summation in $\lambda$ and $\mu$ which does not affect the estimate anyway)
\[\mathcal{N}=\pi_{>0}[(SPS)^{\lambda}S(Su)S(SPS)^{\mu}(\pi_{<0}u_{x})-(SPS)^{\lambda}S(S\pi_{<0}u_{x})S(SPS)^{\mu}(u)].\] Writing this in Fourier space, we can check that\[(\mathcal{N})_{n_{0}}=\sum_{n_{0}=n_{1}+n_{2}+m_{1\sigma},n_{0}>0>n_{2}}\Lambda\cdot u_{n_{1}}u_{n_{2}}\prod_{i=1}^{\sigma}\frac{u_{m_{i}}}{m_{i}},\] where $\sigma=\lambda+\mu$, $\Lambda$ is nonzero only if all variables are $\lesssim N$, and that\[|\Lambda|\lesssim N^{-1}|n_{2}|\cdot(|n_{0}|+|m_{1}|+\cdots+|m_{\sigma}|).\] Therefore, depending on whether $\max|m_{i}|\gtrsim \min(|n_{0}|,|n_{2}|)$ or not, we can also include this term in either $\mathcal{M}_{3}$ or $\mathcal{M}_{2}$.

Now we need to transform $\mathcal{M}_{2}$ and $\mathcal{M}_{3}$ further into $\mathcal{N}_{2}$ and $\mathcal{N}_{3}$. We will leave $\mathcal{M}_{3}$ as it is, and further consider an $\mathcal{M}_{2}$ term\[\sum_{n_{1}+n_{2}+m_{1,\lambda}=n_{0}}\Lambda\cdot u_{n_{1}}u_{n_{2}}\cdot\prod_{i=1}^{\lambda}\frac{u_{m_{i}}}{m_{i}}.\] Recall that\[u=M^{-1}v=\sum_{\lambda=0}^{\infty}\frac{1}{\lambda!}(-\mathrm{i}/2)^{\lambda}(SPS)^{\lambda}v,\] we have that for positive $n_{1}$,\[u_{n_{1}}=\sum_{\lambda\geq 0}C_{\lambda}\sum_{n_{1}=n_{3}+m_{1\lambda}}\Phi\cdot v_{n_{3}}\prod_{i=1}^{\lambda}\frac{u_{m_{i}}}{m_{i}},\] where $|C_{\lambda}|\lesssim C^{\lambda}/\lambda!$, and $|\Phi|\lesssim1$. Since $u=\bar{u}$, for negative $n_{1}$ we have \[u_{n_{1}}=\sum_{\lambda\geq 0}C_{\lambda}\sum_{n_{1}=n_{3}+m_{1\lambda}}\Phi\cdot (\bar{v})_{n_{3}}\prod_{i=1}^{\lambda}\frac{u_{m_{i}}}{m_{i}}.\] Clearly we may do the same for $n_{2}$. If $n_{1}n_{3}\leq 0$, then there must be some $i$ so that $|m_{i}|\gtrsim |n_{1}|$ (which cancels the $\Lambda$ factor), therefore this counts as a term of $\mathcal{N}_{3}$, upon substituting $v$ by $u$ again. If $n_{1}n_{3}>0$, then we may replace the $v$ on the right hand side by $w$, since we know that $w$ (resp. $\bar{w}$) is supported in the positive (resp. negative) frequencies, so we get $\mathcal{N}_{2}$.

In any case we have reduced each of (\ref{line1}), (\ref{line2}) and (\ref{line3}) to either $\mathcal{N}_{2}$ or $\mathcal{N}_{3}$, this completes the proof.
\end{proof}
\subsection{The bootstrap estimate}
In this section we prove the main a priori estimate, namely the following
\begin{prop}\label{bootstrap}
Let $\epsilon$ and $M$ be fixed. For each $N$, let $u^{N}$ be the solution to (\ref{modify}), with initial data $u^{N}(0)=\phi$, where $\|\phi\|_{H^{s'}}\leq M$. If $N=\infty$ we assume $u^{\infty}$ solves (\ref{Bo}). Moreover, let $w^{N}$ and $w^{\infty}$ be the corresponding gauge transforms. Then, when $T$ is small enough depending on $\epsilon$ and $M$, we can find some functions $\widetilde{u^{N}},\widetilde{w^{N}}$ and $\widetilde{u^{\infty}},\widetilde{w^{\infty}}$ extending $u^{N}$, $u^{\infty}$ and $w^{N}$, $w^{\infty}$ on $[-T,T]$, such that\[\|\widetilde{u}^{N}\|_{U'}+\|\widetilde{u}^{\infty}\|_{U'}+N^{\theta}\|\widetilde{u^{N}}-\widetilde{u^{\infty}}\|_{U}\lesssim_{\epsilon,M}1\] and \[\|\widetilde{w}^{N}\|_{X^{s',r}}+\|\widetilde{w}^{\infty}\|_{X^{s',r}}+N^{\theta}\|\widetilde{w^{N}}-\widetilde{w^{\infty}}\|_{X^{s,r}}\lesssim_{\epsilon,M}1,\] where $\theta>0$ is some constant independent of $N$.
\end{prop}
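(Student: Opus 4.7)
The plan is to execute a standard bootstrap argument on the extended gauge variables in Bourgain spaces, driven by the evolution equation (\ref{neweqn}). I would first define the extensions $\tilde{w}$, $\tilde{u}$ by the truncated Duhamel formula
\[\tilde{w}(t) = \chi(t)e^{\mathrm{i}t\partial_{xx}}w(0) + \chi(t)\int_{0}^{t}\chi(s/T)e^{\mathrm{i}(t-s)\partial_{xx}}(\mathcal{N}_2+\mathcal{N}_3)(s)\,\mathrm{d}s,\]
with analogous formulas for $\tilde{u}$ using (\ref{modify}) (resp.\ (\ref{Bo})) in place of (\ref{neweqn}), and for the $\infty$ versions. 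These extensions coincide with the actual solutions on $[-T,T]$ and have Bourgain norms defined globally. I would then impose the bootstrap assumption that the quantities $\|\tilde{w}^{N}\|_{X^{s',r}}$, $\|\tilde{w}^{\infty}\|_{X^{s',r}}$, $\|\tilde{u}^{N}\|_{U'}$, $\|\tilde{u}^{\infty}\|_{U'}$ are each bounded by a large constant $K=K(\epsilon,M)$, together with $N^\theta\|\tilde{w}^N-\tilde{w}^\infty\|_{X^{s,r}}, N^\theta\|\tilde{u}^N-\tilde{u}^\infty\|_U \leq K$, and aim to upgrade each bound to $K/2$ when $T$ is small enough depending only on $(\epsilon, M)$.

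The heart of the argument is the multilinear estimate
\[\|\mathcal{N}_2\|_{X^{s',r-1}} + \|\mathcal{N}_3\|_{X^{s',r-1}} \lesssim_\epsilon Q(K),\]
a polynomial in $K$ with $\epsilon$-dependent coefficients, together with its analogue for differences that produces an extra factor $N^{-\theta}$. For $\mathcal{N}_2$ the key is $|\Lambda_2|\lesssim\min(|n_0|,|n_1|,|n_2|)$, which lets us move the derivative loss onto the smallest-frequency factor so that the two $w$-inputs carry the high frequencies, after which we estimate bilinearly via the $L^6_{t,x}$ Strichartz estimate of Proposition \ref{linearbound}(1) combined with a modulation analysis in $\tau-|n|n$. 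For $\mathcal{N}_3$ the bound $|\Lambda_3|\lesssim1$ is harmless and a standard trilinear estimate via $L^4_{t,x}$-Strichartz suffices. The tail in $\lambda$ is tamed by the combinatorial gain $|C_\lambda|\lesssim C^\lambda/\lambda!$: each factor $u_m/m$ is bounded by Cauchy--Schwarz against $\sum 1/m^2$ using the $Y^{s'}$-part of $\|\tilde{u}\|_{U'}$ (up to $K$-dependent constants), and the factorial absorbs $C^\lambda$. Composing this estimate with the Duhamel bound of Proposition \ref{linearbound}(2) and gaining $T^\delta$ from the cutoff $\chi(s/T)$ via Proposition \ref{linearbound}(3) closes the bootstrap for $T$ small. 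The $u$-bound in $U'$ then follows by inverting the gauge, $u = M^{-1}v$, expanding as a series in $(SPS)^\lambda v$ and applying the same factorial and Strichartz estimates.

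For the difference, I would run the same scheme on $\delta w = \tilde{w}^N-\tilde{w}^\infty$ and $\delta u = \tilde{u}^N-\tilde{u}^\infty$, which satisfy $(\partial_t-\mathrm{i}\partial_{xx})\delta w = (\mathcal{N}_2^N-\mathcal{N}_2^\infty)+(\mathcal{N}_3^N-\mathcal{N}_3^\infty)$. Each difference $\mathcal{N}^N-\mathcal{N}^\infty$ is expanded telescopically so that every resulting term contains either a $\delta$-input or one internal multiplier of the form $S_N^\epsilon-I$ (including the $S_N^\epsilon$ factors hidden inside $M^N$). The $\delta$-terms inherit $N^{-\theta}$ from the bootstrap hypothesis, while the $(S_N^\epsilon-I)$-terms gain $N^{-\theta}$ directly because $1-\psi_\epsilon(n/N)$ vanishes on $|n|\leq(1-\epsilon)N$, producing a loss $\langle n\rangle^\theta N^{-\theta}$ that is absorbed by the regularity slack $\sigma-\sigma'>0$, i.e.\ by $s'>s$, between the inputs and the target norm.

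The main obstacle I expect is the multilinear estimate for $\mathcal{N}_2$ at the scaling $(s,r)=(1/2-\sigma,1/2+\sigma)$: since $s+r=1$ sits on the edge of BO Bourgain theory, the bilinear estimate on two $w$-factors at this threshold regularity must carefully exploit the resonance identity for $\tau-|n|n$ to split high- and low-modulation regimes and pay for the derivative loss, while tracking the interaction with the cutoff $\psi_\epsilon(n/N)$. A secondary difficulty is that $M^N$ and $M^\infty$ differ both through $u^N\ne u^\infty$ and through the $S_N^\epsilon$ multipliers embedded inside them, so the $M^N-M^\infty$ expansion has to be executed while preserving the $N^{-\theta}$ gain uniformly in the index $\lambda$ of the defining series.
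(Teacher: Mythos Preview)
The central gap is in how you handle $\tilde{u}$. Your plan to define $\tilde{u}$ by the Duhamel formula for the original equation (\ref{modify}) (or (\ref{Bo})) cannot close a bootstrap at regularity $s'=1/2-\sigma'<1/2$: the nonlinearity $u\,u_x$ carries a full derivative, and there is no multilinear $X^{s,b}$ estimate that recovers this loss --- this is precisely why the gauge transform is introduced in the first place. You do later say that the $u$-bound follows by inverting the gauge via $u=M^{-1}v$, which is the correct route, but you gloss over a crucial circularity: the operator $M^{-1}$ is itself built from $u$ (through the factors $SPS$), so expanding $u=M^{-1}v$ as a power series and estimating term by term only yields $\|\tilde{u}\|_{U'}\lesssim C(K)$ with $K$ the bootstrap constant, \emph{not} an improved bound $\lesssim_{M}1$. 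The paper resolves this by a separate two-step argument: first control a slightly weaker norm $\|\partial_x^{-\sigma/10}u^{*}\|_{U'}\lesssim_{M}1$ via a frequency split $\pi_{>K}+\pi_{[0,K]}$ (with $K$ large depending on the bootstrap constant but $\ll T^{-1}$), using the original evolution equation only on the bounded-frequency piece where the derivative loss is harmless; then feed this into $u=M^{-1}v$ with $v=w+\pi_{\le 0}v$, observing that any $\pi_{\le 0}v$ contribution forces some $|m_i|\ge K$, giving a gain $K^{-\theta}$ that kills the bootstrap-constant dependence. Without this mechanism your bootstrap for $u$ does not close.

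A secondary issue: your sketch of the $\mathcal{N}_2$ estimate is underspecified. The paper's argument hinges on the resonance identity $|\Xi|\sim\max_j\langle n_j\rangle\cdot\min_j\langle n_j\rangle$ and a case split according to which of $\xi_j$ or $\eta_i$ is $\gtrsim AB$. When the large modulation falls on an $\eta_i$ (an $H$-factor coming from a $u_{m_i}/m_i$), one must invoke the $X^{-1/2,12/25}$ component of the $U'$ norm of $u$; this is in fact the reason that exponent appears in the definition of $U'$. A generic $L^6_{t,x}$ Strichartz bound on the $w$-factors alone does not suffice here.
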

Notice that\[\sup_{t}\|u(t)\|_{H^{s}}\lesssim\|u\|_{U},\]which follows from proposition \ref{linearbound}, we can see that Proposition \ref{deterministic} is a consequence of Proposition \ref{bootstrap}.
\begin{proof} We only consider the bound for $u^{N}$ (and denote $u^{N}$ by $u$), since the bound for $u^{\infty}$ follows from a similar (and much easier) estimate, and the bound for the difference $u^{N}-u^{\infty}$ follows from a standard procedure of taking differences\footnote[1]{Since we are \emph{not} using any energy estimate which may not be compatible with taking differences.}.

In order to initiate the bootstrap, the first step is to bound the norm $\|u^{N}\|_{U^{T}}$ and $\|w^{N}\|_{X^{s',r,T}}$ for very small $T$. By the standard arguments in $X^{s,b}$ theory, together with part (5) of Proposition \ref{linearbound}, we know that this reduces to proving $\|w(0)\|_{H^{s'}}\lesssim_{M}1$. However, from the expression of the gauge transform we know that\[|w(0)_{n_{0}}|\lesssim\sum_{\mu\geq 0}\frac{C^{\mu}}{\mu!}\sum_{n_{0}=n_{1}+m_{1,\mu}}|u(0)_{n_{1}}|\cdot\prod_{i=1}^{\mu}\frac{|u(0)_{m_{i}}|}{m_{i}}.\] Let the sum over $m_{i}$ be $y_{n_{0}-n_{1}}$, then we have
\[\sum_{l}\langle l\rangle^{3/4}|y_{l}|\lesssim\sum_{m_{1},\cdots,m_{\mu}}\prod_{i=1}^{\mu}\langle m_{i}\rangle^{-1/4}|u(0)_{m_{i}}|\lesssim \|u(0)\|^\mu_{H^{1/4+\sigma}}\lesssim_{M}1,\] and $s'<1/2$, so we can easily deduce that\[\|w(0)\|_{H^{s'}}\lesssim\sum_{l}|y_{l}|\cdot\bigg(\sum_{n}\langle n+l\rangle^{2s'}|u(0)_{n}|^{2}\bigg)^{1/2}\lesssim_{M}1.\]

Suppose we have constructed some $\widetilde{w}$ and $\widetilde{u}$ for some time $T$, satisfying the desired inequalities, we now need to improve these inequalities, with the same $T$, provided that $T\ll_{\epsilon,M}1$. We will first construct a new $\widetilde{w}$, and this is done simply using the equation (\ref{neweqn}). We will define\[w^{*}=\chi(t)e^{\mathrm{i}\partial_{xx}}w(0)+\mathcal{E}(\mathcal{N}_{2}+\mathcal{N}_{3}),\] where $\mathcal{E}$ is the Duhamel operator as in Proposition \ref{linearbound}, and $\mathcal{N}_{2}$ and $\mathcal{N}_{3}$ are constructed \emph{using $\chi(t)\widetilde{w}$ and $\chi(T^{-1}t)\widetilde{u}$ respectively}; however, we will denote these two functions simply by $w$ and $u$ below. Using Proposition \ref{linearbound} again, we now only need to bound\[\|\mathcal{N}_{2}\|_{X^{s',r-1}}+\|\mathcal{N}_{3}\|_{X^{s',r-1}}.\]

To bound $\mathcal{N}_{3}$, we use duality to reduce the bounding the following expression\begin{eqnarray}J&=&\sum_{\mu}C_{\mu}\sum_{n_{0}=n_{13}+m_{1\mu}}\int_{\xi_{0}=\xi_{13}+\eta_{1\mu}}\Lambda\cdot\langle n_{0}\rangle^{s'}(\mathcal{F}_{x,t}v)(n_{0},\xi_{0})\times\nonumber\\
&\times&\prod_{j=1}^{3}(\mathcal{F}_{x,t}u)(n_{j},\xi_{j})\cdot\prod_{i=1}^{\mu}\frac{(\mathcal{F}_{x,t}u)(m_{i},\eta_{i})}{m_{i}}\nonumber,\end{eqnarray} note the abuse of notation by replacing $\chi(T^{-1}t)\widetilde{u}$ with $u$. Here we assume that $v\in X^{0,1/2-\sigma}\subset L_{t,x}^{4}$ (even after taking absolute value in Fourier space), and we may assume without loss of generality that $|n_{0}|\lesssim |n_{1}|$ (the case $|n_{0}|\lesssim |m_{i}|$ is much easier). Now use that $\langle \partial_{x}\rangle^{-s'}u\in L_{t,x}^{2}$, and that $u\in Y^{s'}\subset L_{t,x}^{10}$ when $\sigma$ is small enough, and that $\partial_{x}^{-1}u\in L_{t,x}^{\infty}$ (all hold after taking absolute value in Fourier space), we could simply take absolute value of every term in $J$, then switch to $(t,x)$ space, then use H\"{o}lder to bound $J$. The gain $T^{\theta}$ will come from Proposition \ref{linearbound} and the time cutoff $\chi(T^{-1}t)$ (the same happens below).

Now let us consider the harder part $\mathcal{N}_{2}$. We may omit the summation in $\mu$, and we only need to consider a sum of type
\begin{eqnarray}
J&=&\sum_{n_{0}=n_{1}+n_{2}+m_{1\mu}}\langle n_{0}\rangle^{s'}\langle n_{1}\rangle^{-s'}\langle n_{2}\rangle^{-s'}\min_{0\leq j\leq 2}\langle n_{j}\rangle\times\nonumber\\
&\times&\int_{\xi_{0}=\xi_{1}+\xi_{2}+\eta_{1\mu}+\Xi}F(n_{0},\xi_{0})G(n_{1},\xi_{1})G(n_{2},\xi_{2})\prod_{i=1}^{\mu}\frac{H(m_{i},\eta_{i})}{m_{i}}\nonumber.
\end{eqnarray}Here $\Xi=|n_{0}|n_{0}-|n_{1}|n_{1}-|n_{2}|n_{2}$ and $F$ is defined by\[F(n,\xi)=(\mathcal{F}_{x,t}v)(n,\xi+|n|n)\] with $v$ as above, $G$ and $H$ are defined in the same way, corresponding to functions $\langle\partial_{x}\rangle^{s'}w$ and $u$ respectively. Moreover, we may assume in the summation that $\min\langle n_{j}\rangle\gg\max\langle m_{i}\rangle$, since otherwise we can bound this term in the same way as $\mathcal{N}_{3}$. In this situation we can check algebraically that\[|\Xi|\sim\max_{j}\langle n_{j}\rangle\cdot\min\langle n_{j}\rangle.\] Let $\max\langle n_{j}\rangle =A$ and $\min\langle n_{j}\rangle =B$, then the weight\[\langle n_{0}\rangle^{s'}\langle n_{1}\rangle^{-s'}\langle n_{2}\rangle^{-s'}\min_{0\leq j\leq 2}\langle n_{j}\rangle\lesssim B^{11/20};\] moreover, one of $\xi_{j}$ or $\eta_{i}$ must be $\gtrsim AB$ by our bound on $\Xi$.

Let $|\xi_{j}|\gtrsim AB$ for some $j$, say $j=1$ (the other cases being similar). Notice that $v\in X^{0,1/2-\sigma}\subset L_{t,x}^{4}$, and also $\langle \partial_{x}\rangle^{s'}w\in L_{t,x}^{4}$ (the function that determines $G$), and that we can cancel the weight $B^{11/20}$ by a power $\langle \xi_{1}\rangle^{3/10}$, so we still have\[h\in X^{0,1/5}\subset L_{t,x}^{3}\] by interpolation, where\[(\mathcal{F}_{t,x}h)(n,\xi+|n|n)=\langle\xi\rangle^{3/10}G(n,\xi).\] Now we simply use the above arguments to cancel the weight, then switch to the $(x,t)$ space and ue H\"{o}lder, bounding the $F$ factor in $L_{t,x}^{4}$, one $G$ factor in $L_{t,x}^{4}$ and the other in $L_{t,x}^{3}$, and all $H$ factors in appropriate spaces.

If $|\eta_{i}|\gtrsim AB$ for some $i$ (say $i=1$), then we will use the $X^{-1/2,12/25}$ bound for $u$, which implies\[h\in X^{1/2,1/5}\subset L_{t,x}^{3}\] by H\"{o}lder, where\[(\mathcal{F}_{x,t}h)(n,\xi+|n|n)=|n|^{-1}\langle\xi\rangle^{7/25}H(n,\xi).\] Moreover the $\langle \xi_{1}\rangle^{7/25}$ factor cancels the weight, so we simply bound $F$ and both $G$ factors in $L_{t,x}^{5}$ (using part (1) of Proposition \ref{linearbound} and interpolation), bound the $H$ factor corresponding to $m_{1}$ in $L_{t,x}^{3}$, then bound the other factors in appropriate norms.

Finally we should improve the bound on $u$. We must be careful here, since we will not use the evolution equation of $u$; however, let us postpone this issue to the end, and first see how we can bound the $(U')^{T}$ norm of $u$.

Bounding the $Y^{s'}$ norm is easy; since $u=M^{-1}v$ we can write $u$ as a linear combination of spacetime shifts $(n,\beta)$ of $v$ with coefficients that are summable even after multiplying by $\langle n\rangle^{7/8}$ (this can be proved in the same way as in the analysis of $w(0)$ before), and we know that a spacetime shift $(n,\beta)$ increases the $Y^{s}$ norm by a factor $\lesssim \langle n\rangle^{s'}$.

Now we need to bound the $X^{-1/2,12/25}$ norm of $u$. Clearly we may restrict to $\pi_{>0}u$, so by the formula $u=M^{-1}v$ and duality we only need to bound
\begin{eqnarray}\label{case1}J&=&\sum_{n_{0}=n_{1}+m_{1\mu}}\int_{\beta_{0}=\beta_{1}+\eta_{1\mu}+\Xi}\langle n_{0}\rangle^{-1/2}\langle \beta_{0}\rangle^{12/25}\times\\
&\times&\langle n_{1}\rangle^{-s'}\langle \beta_{1}\rangle^{-r} F(n_{0},\beta_{0})G(n_{1},\beta_{1})\prod_{i=1}^{\mu}\frac{H(m_{i},\eta_{i})}{m_{i}}\nonumber,\end{eqnarray} provided $\langle m_{i}\rangle\ll \langle n_{0}\rangle$ for each $i$, and $H$ is as above, $F$ and $G$ are bounded in $L_{t,x}^{2}$. If instead $\langle m_{i}\rangle\gtrsim\max(\langle n_{0}\rangle,\langle n_{1}\rangle)$ for some $i$, then we should have \begin{eqnarray}\label{case2}J&=&\sum_{n_{0}=n_{1}+m_{1\mu}}\int_{\beta_{0}=\beta_{1}+\eta_{1\mu}+\Xi}\langle n_{0}\rangle^{-1/2}\langle \beta_{0}\rangle^{12/25}\times\\
&\times&\langle n_{1}\rangle^{1/2}\langle \beta_{1}\rangle^{-12/25} F(n_{0},\beta_{0})G(n_{1},\beta_{1})\prod_{i=1}^{\mu}\frac{H(m_{i},\eta_{i})}{m_{i}}\nonumber.\end{eqnarray} In both cases we have \[\Xi=|n_{0}|n_{0}-|n_{1}|n_{1}-|m_{1}|m_{1}-\cdots-|m_{\mu}|m_{\mu}.\] From the equation we know that either $\langle\beta_{1}\rangle\gtrsim\langle\beta_{0}\rangle$, or $\langle\eta_{i}\rangle\gtrsim\langle\beta_{0}\rangle$ for some $i$, or $|\beta_{0}|\lesssim|\Xi|$.

In case (\ref{case1}), if $\langle\beta_{0}\rangle\lesssim\langle\beta_{1}\rangle$, then we can cancel the two powers, then bound $F$ and $G$ in $L_{t,x}^{2}$, the other factors in $L_{t,x}^{\infty}$; if $\langle\beta_{0}\rangle\lesssim\langle\eta_{i}\rangle$ for some $i$, then we simply invoke the $X^{-1/2,12/25}$ bound for $u$ and make similar arguments; if $|\beta_{0}|\lesssim|\Xi|$, notice that \[|\Xi|\lesssim |n_{0}|\cdot\max_{i}|m_{i}|,\] we can use this to cancel the weight, then bound $F$ in $L_{t,x}^{2}$, $G$ in $L_{t,x}^{4}$, the other factors in appropriate spaces.

In case (\ref{case2}), we must have some $i$, so that $|m_{i}|\sim A$ is larger than any other parameter. We may assume in the worst case that $|n_{1}|\sim A$ (since when $|n_{0}|\sim A$ or $|m_{j}|\sim A$ we will gain more due to the powers we have), and the maximum of all other parameters is $B$. Then again we have either $\langle \beta_{0}\rangle\lesssim\langle \beta_{1}\rangle$ or $\langle \beta_{0}\rangle\lesssim\langle \eta_{i}\rangle$ or $|\beta_{0}|\lesssim|\Xi|\lesssim AB$. In the first case we cancel the weight, then bound $F$ and $G$ in $L_{t,x}^{2}$, in the second case, we make similar arguments as before, using the $X^{-1/2,12/25}$ norm of $u$; in the third case we can cancel the weight and gain at least $A^{1/25}$, so the proof still goes through.

Finally let us discuss how to obtain an \emph{improved} estimate without using the evolution equation for $u$. We argue as in \cite{Deng}, first choose some large $K$ depending on the bound $M'$ appearing in the bootstrap assumption, but still smaller than $T^{-1}$; then by decomposing $\pi_{>0}u$ into $\pi_{>K}u$ and $\pi_{[0,K]}u$, we can bound the slightly weaker norm $\|\partial_{x}^{-\sigma/10}u^{*}\|_{U'}$ of some other extension $u^{*}$ of $u$ by $O_{M}(1)$ (in fact, the bound for $\pi_{>K}$ part is trivial since we can gain a power of $K$, and for the $\pi_{[0,K]}$ part we will use the evolution equation for $u$). Then we use the formula $u=M^{-1}v$ and write $v=w+\pi_{\leq 0}v$. We will use $u^{*}$ to realize the operator $M$, use some extension $w^{*}$ of $w$ that is bounded by $O_{M}(1)$ as we just proved. Then we should be able to bound the output function by $O_{M}(1)$ as above, except for the part where we have $\pi_{\leq 0}v$ (which is bounded only by $O_{M'}(1)$ instead of $O_{M}(1)$). But in this case we must have $|m_{i}|\geq K$ for some $i$, so we gain a small power of $K$ which cancels the $O_{M'}(1)$ loss.

In this way we can complete the proof of the proposition.
\end{proof}
\section{Some useful orthogonality relations}\label{orth}
In this section we recall for the sake of completeness some useful results from  \cite{TVjmpa} on the orthogonality of multilinear products of Gaussian variables
$g_k(\omega)$ that appear in \eqref{series}. Introduce the sets : 
$$
{\mathcal A}(n)=\{(j_1,..., j_n)\in \Z^n| j_k\neq 0, k=1,...,n, \sum_{k=1}^n 
j_k=0\},
$$
$$\tilde {\mathcal A}(n)=\{(j_1,..., j_n)\in {\mathcal A}(n)  \,|\,  j_k\neq -j_l, 
\hbox{ } \forall k,l\},
$$
$
\tilde {\mathcal A}^c(n)={\mathcal A}(n)\setminus \tilde {\mathcal A}(n)
$
and
\begin{equation*}
\tilde {\mathcal A}^{c,j}(n)=\{(j_1,...,j_n)\in \tilde {\mathcal A}^c(n)| j=j_l=-j_m \hbox{ for some } 1\leq l\neq m\leq n\}.
\end{equation*}

\begin{prop}\label{orth1}
Assume that 
$$
(j_1,..., j_n), (i_1,...,i_n)\in \tilde {\mathcal A}(n),
\{j_1,...,j_n\}\neq \{i_1,..., i_n\},$$
then 
$
\int g_{j_1}...g_{j_n}
\overline{g_{i_1}...g_{i_n}
} dp=0.
$
\end{prop}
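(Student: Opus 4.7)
The plan is to reduce the computation to independent positive-frequency Gaussians using the reality constraint $g_{-m}=\overline{g_m}$, and then appeal to the elementary moment identity $\int g^{\alpha}\overline{g}^{\beta}\,dp=\alpha!\,\delta_{\alpha,\beta}$ valid for a single standard complex Gaussian $g$.

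First I would encode each $n$-tuple by its frequency multiplicities. For each $m>0$ set
\[a_m=\#\{k:j_k=m\},\quad b_m=\#\{k:j_k=-m\},\quad c_m=\#\{k:i_k=m\},\quad d_m=\#\{k:i_k=-m\},\]
and substitute $g_{-m}=\overline{g_m}$ into the integrand. This rewrites
\[g_{j_1}\cdots g_{j_n}\,\overline{g_{i_1}\cdots g_{i_n}}=\prod_{m>0}g_m^{a_m+d_m}\,\overline{g_m}^{b_m+c_m}.\]
By independence of the family $(g_m)_{m>0}$ together with the moment identity above, the integral factorises into a product indexed by $m>0$ that vanishes unless
\[a_m+d_m=b_m+c_m\qquad\text{for every }m>0.\]

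The key step is then to exploit the hypothesis that both tuples lie in $\tilde{\mathcal A}(n)$, which translates into $a_mb_m=0$ and $c_md_m=0$ for every $m$. A short case analysis finishes the argument: if $a_m>0$ then $b_m=0$, so the balance condition forces $c_m=a_m+d_m\geq a_m>0$, whence $d_m=0$ and $a_m=c_m$; the symmetric case $b_m>0$ gives $b_m=d_m$; and if all four multiplicities vanish there is nothing to check. In every case one concludes $a_m=c_m$ and $b_m=d_m$ for every $m>0$, i.e. the two tuples coincide as multisets, contradicting the assumption $\{j_1,\dots,j_n\}\neq\{i_1,\dots,i_n\}$.

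The only subtlety worth flagging, and where I expect the main (very mild) obstacle to lie, is the role of the $\tilde{\mathcal A}$ restriction: it is precisely what prevents the pairing mechanism $g_m\overline{g_m}=|g_m|^2$ from producing nontrivial cancellations between $+m$ and $-m$ entries within a single tuple, and hence what allows the orthogonality to hold in the clean form stated. Note that the sum-zero condition $\sum_k j_k=0$ inherited from $\mathcal A(n)$ plays no role in this particular computation; it will only enter later, when these orthogonality relations are combined with the constraint coming from the Fourier support of the multilinear expressions.
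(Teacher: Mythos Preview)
Your argument is correct. The factorisation via $g_{-m}=\overline{g_m}$, the independence of $(g_m)_{m>0}$, and the balance condition $a_m+d_m=b_m+c_m$ are all valid, and the case analysis using $a_mb_m=c_md_m=0$ cleanly forces $a_m=c_m$, $b_m=d_m$. Your closing remark that the sum-zero constraint plays no role here is also accurate.

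The paper organises the proof differently. Rather than arguing directly, it establishes a single auxiliary lemma valid for arbitrary tuples in $\mathcal{A}(n)$ (not just $\tilde{\mathcal{A}}(n)$): if the multisets differ and the integral is nonzero, then at least one of the two tuples must contain a pair of opposite indices $j_l=-j_m$ or $i_l=-i_m$. Proposition~\ref{orth1} is then an immediate corollary, since membership in $\tilde{\mathcal{A}}(n)$ rules out such pairs. The paper's proof of the lemma singles out one index $l$ at which the multiplicities in the two tuples disagree and analyses the four sets $\{k:i_k=\pm i_l\}$, $\{k:j_k=\pm i_l\}$; it does not write down the full multiplicity balance as you do. The payoff of the paper's route is that the same lemma also underlies Proposition~\ref{orth2}, where one tuple is allowed a single opposite pair; your cleaner multiplicity argument handles Proposition~\ref{orth1} directly but would need to be adapted (tracking one distinguished pair on each side) to recover Proposition~\ref{orth2}.
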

\begin{prop}\label{orth2}
Let $i, j>0$ be fixed and assume \begin{align}\nonumber&(j_1, j_2, j_3, j_4,j_5)\in \tilde {\mathcal A}^{c,j}(5),
(i_1,i_2, i_3, i_4,i_5)\in \tilde {\mathcal A}^{c,i}(5),\\\nonumber
&\big \{\{j_1, j_2, j_3, j_4,j_5\}\setminus \{j, -j\}\big\}\neq \big \{\{i_1, i_2, i_3, i_4,i_5\}\setminus \{i, -i\}\big \},\end{align}
then
$
\int g_{j_1}g_{j_2}g_{j_3}g_{j_4}g_{j_5}
\overline{g_{i_1}
g_{i_2}g_{i_3}
g_{i_4}g_{i_5}
} dp=0$.
\end{prop}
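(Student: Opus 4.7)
My plan is to reduce the statement to an elementary multiset identity via the explicit moment formula for independent complex Gaussians.

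First I would rewrite $\overline{g_n}=g_{-n}$, so that the integral equals $E[\prod_{k=1}^{5}g_{j_k}\,\prod_{k=1}^{5}g_{-i_k}]$. Letting $p_n$ and $q_n$ denote the multiplicities of $n$ and $-n$ respectively in the combined ten-element multiset $\{j_1,\ldots,j_5,-i_1,\ldots,-i_5\}$, the independence of $(g_n)_{n>0}$ together with the identity $E[g_n^{p}\overline{g_n}^{q}]=p!\,\delta_{p,q}$ for standard complex Gaussians factors the expectation as $\prod_{n>0}p_n!\,\delta_{p_n,q_n}$. Thus the integral is nonzero if and only if $p_n=q_n$ for every $n>0$, equivalently iff the ``signed count'' $\delta_A(v):=\#\{k:a_k=v\}-\#\{k:a_k=-v\}$ agrees for the two underlying multisets of $(j_1,\ldots,j_5)$ and $(i_1,\ldots,i_5)$.

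Next, since removing a $(v,-v)$ pair from a multiset does not alter $\delta$, setting $R_j=\{j_1,\ldots,j_5\}\setminus\{j,-j\}$ and $R_i=\{i_1,\ldots,i_5\}\setminus\{i,-i\}$ I would reduce the necessary condition for a nonzero integral to $\delta_{R_j}=\delta_{R_i}$. Both $R_j$ and $R_i$ are triples of nonzero integers summing to zero (the sum condition coming from $\sum_k j_k=0=\sum_k i_k$ together with $j+(-j)=i+(-i)=0$). The key observation, which I expect to be the only mildly delicate point, is that no opposite pair $(v,-v)$ can occur inside such a triple: if $R$ contained $v$ and $-v$, then the third entry would be forced to equal zero, contradicting membership in $\mathcal{A}(5)$.

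Therefore for each $|v|>0$ at most one of $\pm v$ appears in each residue, and its sign and multiplicity are read off directly from $\delta_{R_j}(v)$ (respectively $\delta_{R_i}(v)$); hence $\delta_{R_j}$ (respectively $\delta_{R_i}$) recovers $R_j$ (respectively $R_i$) uniquely as a multiset. Consequently $\delta_{R_j}=\delta_{R_i}$ would force $R_j=R_i$, contradicting the standing hypothesis. The integral must therefore vanish.
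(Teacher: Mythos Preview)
Your argument is correct. The moment identity $E[g^{p}\overline{g}^{q}]=p!\,\delta_{p,q}$ together with independence gives exactly the nonvanishing criterion $\delta_{J}=\delta_{I}$ you state; the reduction to the residuals $R_j,R_i$ is sound, and the observation that a zero--sum triple of nonzero integers cannot contain an opposite pair is precisely what pins down $R_j,R_i$ from their $\delta$-functions.

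Your route differs from the paper's. The paper does not argue directly but invokes an auxiliary lemma (if two $n$-tuples in $\mathcal{A}(n)$ give distinct multisets yet a nonzero integral, then one of them must contain an opposite pair $j_l=-j_m$), whose proof isolates a single index of mismatched multiplicity and factors the expectation accordingly. That lemma yields Proposition~\ref{orth1} immediately, but for Proposition~\ref{orth2} (where both tuples already contain an opposite pair by hypothesis) it is only the starting point: one still has to peel off the opposite pairs and analyse the remaining triples, which the paper defers to \cite{TVjmpa}. Your approach replaces this indirect iteration by the exact nonvanishing criterion $\delta_{J}=\delta_{I}$, from which both propositions drop out at once; the paper's lemma buys a statement that is reusable for general $n$ without writing down the full moment, while your computation buys a self-contained and shorter proof in the present case.
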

The proof of the propositions above are based on the following lemma.
\begin{lem}
Let
\begin{equation}\label{digraf}
(j_1,...,j_n),(i_1,...,i_n)\in {\mathcal A}(n),
\{j_1,...,j_n\}\neq \{i_1,...,i_n\}\end{equation}
be such that:
\begin{equation}\label{gicie}
\int g_{j_1}...g_{j_n}\overline{g_{i_1}
...g_{i_n}} dp\neq 0.
\end{equation}
Then there exist $1\leq l, m\leq n$, with $l\neq m$ and such that at least one of the following occurs:
either $i_l=-i_m$ or $j_l=-j_m$.
\end{lem}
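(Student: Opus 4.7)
The plan is to reduce the expectation to a product of single-variable Gaussian moments using $g_{-k}=\overline{g_k}$, apply the fact that $\int g_k^{a}\overline{g_k}^{\,b}\,dp=0$ unless $a=b$, and then run a short counting argument to force the two multisets to coincide, contradicting \eqref{digraf}.

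First I would collect exponents. Since $(g_k)_{k>0}$ is an independent family of standard complex Gaussians and $g_{-k}=\overline{g_k}$, rewrite each factor $g_{j_l}$ as $g_{|j_l|}$ when $j_l>0$ and as $\overline{g_{|j_l|}}$ when $j_l<0$; for each conjugated factor $\overline{g_{i_l}}$ the roles are swapped. Grouping by $k>0$, the integrand in \eqref{gicie} becomes
\[\prod_{k>0} g_k^{a_k}\,\overline{g_k}^{\,b_k},\]
with
\[a_k=\#\{l:j_l=k\}+\#\{l:i_l=-k\},\qquad b_k=\#\{l:j_l=-k\}+\#\{l:i_l=k\}.\]
By independence the integral factorises over $k$, and each factor vanishes unless $a_k=b_k$; hence \eqref{gicie} forces $a_k=b_k$ for every $k>0$.

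Next I would argue by contradiction. Suppose there are no $l\neq m$ with $j_l=-j_m$ or $i_l=-i_m$. Setting $A_{\pm k}=\#\{l:j_l=\pm k\}$ and $B_{\pm k}=\#\{l:i_l=\pm k\}$, the assumption means that for each $k>0$ at most one of $A_k,A_{-k}$ is nonzero, and likewise at most one of $B_k,B_{-k}$ is nonzero. Combining this with $A_k+B_{-k}=A_{-k}+B_k$ yields $A_k=B_k$ and $A_{-k}=B_{-k}$ for every $k>0$: indeed, if $A_k>0$ then $A_{-k}=0$, so $B_k=A_k+B_{-k}\geq A_k>0$, whence $B_{-k}=0$ and $A_k=B_k$; the case $A_{-k}>0$ is symmetric, and if both $A_k,A_{-k}$ vanish then $B_k=B_{-k}$, which together with the single-sign constraint on $B$ gives $B_{\pm k}=0$. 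Therefore the multisets $\{j_1,\ldots,j_n\}$ and $\{i_1,\ldots,i_n\}$ coincide, contradicting \eqref{digraf}.

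I do not anticipate any essential obstacle; the only delicate point is the bookkeeping, since the identity $g_{-k}=\overline{g_k}$ makes it easy to confuse which occurrences of $g_k$ and $\overline{g_k}$ contribute to $a_k$ versus $b_k$. Once the exponents are correctly assembled, the pairing argument is elementary and terminates in a few lines.
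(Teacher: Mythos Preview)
Your argument is correct. Both your proof and the paper's rest on the same core facts: rewriting via $g_{-k}=\overline{g_k}$, factoring over $k>0$ by independence, and using that $\int g_k^{a}\overline{g_k}^{\,b}\,dp=0$ unless $a=b$. The organization differs, however. The paper picks a single index $l$ at which the multiplicity of $i_l$ in the two tuples differs, assumes (by contradiction) that $-i_l$ does not occur among the $i_k$'s, and then computes directly that the $g_{i_l}$ factor of the integral has unequal powers and hence vanishes; the symmetric case is handled by swapping the roles of $i$ and $j$. You instead derive the full system of balance equations $A_k+B_{-k}=A_{-k}+B_k$ for all $k>0$ and run a short case analysis under the no-pair hypothesis to force $A_{\pm k}=B_{\pm k}$ everywhere, yielding multiset equality. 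Your route is a bit more symmetric and avoids the WLOG step, at the cost of a small case split; the paper's is more local and slightly quicker to the punchline. Both are equally elementary.
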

\begin{proof}
By \eqref{digraf} we get the
existence of $l\in \{1,...n\}$ such that:
\begin{equation}\label{hyHY}|\{k=1,...,n| i_l=i_k\}|\neq 
|\{k=1,...,n| j_k=i_l\}|\end{equation}
where $|.|$ denotes the cardinality.
Next we introduce 
\begin{eqnarray*}
{\mathcal N}_l & = & \{ k=1,...,n \, | \, i_k=-i_l\},
\\
{\mathcal M}_l & = & \{ k=1,...,n \, | \, i_k=i_l\},
\\
{\mathcal P}_l & = & \{ k=1,...,n \, | \, j_k=i_l\},
\\
{\mathcal L}_l & = & \{ k=1,...,n \, | \, j_k=-i_l\}.
\end{eqnarray*}
Notice that ${\mathcal M}_l\neq \emptyset$ since it contains at least the element $l$,
and also by \eqref{hyHY} $|{\mathcal M}_l|\neq |{\mathcal P}_l|$.
We can assume $|{\mathcal M}_l|> |{\mathcal P}_l|$ (the case 
$|{\mathcal M}_l|<|{\mathcal P}_l|$ is similar).
Our aim
is to prove that ${\mathcal N}_l\neq \emptyset$.
Next assume by the absurd that
${\mathcal N}_l= \emptyset$,
then by independence we get
\begin{align}\label{toulou}&\int g_{j_1}...g_{j_n}\overline{g_{i_1}
...g_{i_n}} dp
\\\nonumber=\int |g_{i_l}|^{2|{\mathcal P}_l|}
{\bar g_{i_l}}^{|{\mathcal M}_l| + 
|{\mathcal L}_l|-|{\mathcal P}_l| } dp &\int \big( \Pi_{k\notin {\mathcal M}_l} \overline{g_{i_k}}\big) 
\big(\Pi_{h\notin {\mathcal L}_l\cup {\mathcal P}_l} g_{j_h}\big)
dp=0\end{align}
where at the last step we used $|{\mathcal M}_l| + 
|{\mathcal L}_l|-|{\mathcal P}_l|>0$. Hence we get an absurd by \eqref{gicie}.
\end{proof}
\section{On the approximation of the measures $d\rho_{1,R}$ and $d\rho_{3/2,R}$}
We first introduce the modified energies:
\begin{align}\label{EepsilonN}E_N^\epsilon(u)=  \|u\|_{\dot H^1}^2- 
\|S_N^\epsilon u\|_{\dot H^{1}}^2
+E_{1}(S_N^\epsilon u),
\end{align}
\begin{align}\label{FepsilonN}G_N^\epsilon(u)&= \|u\|_{\dot H^{3/2}}^2 - 
\|S_N^\epsilon u\|_{\dot H^{3/2}}^2
+E_{3/2}(S_N^\epsilon u),
\end{align}
and the approximating modified densities:
\begin{align}\label{old}
F_{N,R}^\epsilon &=\chi_R (\|\pi_N u\|_{L^2})\times 
\chi_R (\|\pi_N u\|_{\dot H^{1/2}}^2 -\alpha_N+ 1/3 \int (S_N^\epsilon u)^3 dx)
\\\nonumber
&\times \exp({
\|S_N^\epsilon u\|_{\dot H^{1}}^2
-E_{1}(S_N^\epsilon u)
}) ,\end{align}
\begin{align}
\label{oldrho} H_{N,R}^\epsilon &= \chi_R (\|\pi_N u\|_{L^2})\times 
\chi_R (\|\pi_N u\|_{\dot H^{1/2}}^2 + 1/3 \int (S_N^\epsilon u)^3 dx) 
\\\nonumber &\times \chi_R(E_N^\epsilon (\pi_N u)-\alpha_N)
\times  \exp({\|S_N^\epsilon u\|_{\dot H^{3/2}}^2
-E_{3/2}(S_N^\epsilon u)}) .
\end{align}
 We recall the explicit expressions of $E_1$ and $E_{3/2}$:
 $$E_1(u)=\|u\|_{\dot H^1}^2+ \frac{3}{4} \int u^2 {\mathcal H} \partial_x u
+ \frac{1}{8} \int  u^4$$
and 
$$E_{3/2}(u)= \|u\|^2_{\dot H^{3/2}} - (\int \frac 32 u u_x^2 +\frac 12 u ({\mathcal H}u_x)^2) \\\nonumber
- \int (\frac 13 u^3 {\mathcal H}u_x +\frac 14 u^2{\mathcal H}(uu_x)) - \frac1{20} \int u^5.$$
Next we prove that as $N\rightarrow \infty$ the measures 
$F_{N,R}^\epsilon d\mu_1$ (for $\epsilon>0$ fixed)
converge to $d\rho_{1,R}$ and $H_{N,R}^\epsilon d\mu_{3/2}$
converge to $d\rho_{3/2,R}$ (in a strong sense). 
\begin{prop}\label{approx}
Let $R, \sigma>0$ and $\epsilon_0>0$ be fixed, then:
\begin{equation}\label{occ}\lim_{N\rightarrow \infty} 
\sup_{A\in {\mathcal B} (H^{1/2-\sigma})} |\int_{A} F_{N,R}^{\epsilon_0} d\mu_1-\int_{A}  d\rho_{1, R}|=0,\end{equation}
\begin{equation}\label{occ3}
\lim_{N\rightarrow \infty} 
\sup_{A\in {\mathcal B} (H^{1-\sigma})} |\int_{A} H_{N,R}^{\epsilon_0} d\mu_1-\int_{A}  d\rho_{3/2, R}|=0.\end{equation}
\end{prop}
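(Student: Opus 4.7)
The aim is to reduce the statement to the $L^{q}(d\mu_{k/2})$-convergence, established in \cite{tz,TVens}, of the sharp-cutoff densities $F_{k/2,N,R}$ from \eqref{density} to $F_{k/2,R}$, valid for every $1\le q<\infty$. Since the supremum in \eqref{occ} and \eqref{occ3} is bounded by the $L^{1}$ distance of densities, it suffices to show
$\|F_{N,R}^{\epsilon_{0}}-F_{1,N,R}\|_{L^{1}(d\mu_{1})}\to 0$
and
$\|H_{N,R}^{\epsilon_{0}}-F_{3/2,N,R}\|_{L^{1}(d\mu_{3/2})}\to 0$
as $N\to\infty$ with $\epsilon_{0}$ fixed, and then combine with the known convergence via the triangle inequality.

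Consider \eqref{occ}. The densities $F_{N,R}^{\epsilon_{0}}$ and $F_{1,N,R}$ agree in the factor $\chi_{R}(\|\pi_{N}u\|_{L^{2}})$ and in the quadratic part $\|\pi_{N}u\|_{\dot H^{1/2}}^{2}-\alpha_{N}$ of the second cutoff; they differ only where $S_{N}^{\epsilon_{0}}$ has replaced $\pi_{N}$, namely in $\tfrac{1}{3}\int(\cdot)^{3}\,dx$ inside the second $\chi_{R}$, and in $R_{1}(\cdot)$ in the exponential. Using $|\chi_{R}(a)-\chi_{R}(b)|\lesssim_{R}|a-b|$ and the elementary bound $|e^{x}-e^{y}|\le|x-y|(e^{x}+e^{y})$, the pointwise difference is controlled by a sum of two terms, each of the form (analytic defect)\,$\times$\,(tamed exponential). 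After H\"older's inequality, it remains to establish: (i) $L^{q}(d\mu_{1})$-convergence to zero of the defects $\int[(S_{N}^{\epsilon_{0}}u)^{3}-(\pi_{N}u)^{3}]\,dx$ and $R_{1}(S_{N}^{\epsilon_{0}}u)-R_{1}(\pi_{N}u)$; and (ii) a uniform-in-$N$ $L^{q}(d\mu_{1})$-bound on $\chi_{R}(\|\pi_{N}u\|_{L^{2}})\,\chi_{R}(\cdots)\,e^{-R_{1}(S_{N}^{\epsilon_{0}}u)}$, together with its sharp-cutoff counterpart.

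Ingredient (i) is a direct Wiener-chaos computation: since $S_{N}^{\epsilon_{0}}u-\pi_{N}u$ has Fourier support in the shell $(1-\epsilon_{0})N\le|n|\le N$, plugging the Gaussian series \eqref{series} and invoking the orthogonality relations of Section~\ref{orth} together with hypercontractivity produces decay at any polynomial rate in $N$, hence $L^{q}$-convergence to zero. Ingredient (ii) is the main technical obstacle. In \cite{tz,TVens} the uniform $L^{q}$-bound is proved when both the cutoffs and the exponent involve only $\pi_{N}u$; the present hybrid setting, where the exponent is built from $S_{N}^{\epsilon_{0}}u$ while the $\chi_{R}$-constraints involve $\pi_{N}u$, requires re-running that argument. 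The key observation is that $S_{N}^{\epsilon_{0}}$ is a bounded real Fourier multiplier, so the cutoff $\chi_{R}(\|\pi_{N}u\|_{L^{2}})$ implicitly controls $\|S_{N}^{\epsilon_{0}}u\|_{L^{2}}$, and similarly the second $\chi_{R}$ controls the renormalized quadratic part of $S_{N}^{\epsilon_{0}}u$; with these comparisons in place, the Nelson-type variational estimate of \cite{tz,TVens} survives with bounds independent of $N$ (but not of $\epsilon_{0}$, which is harmless since $\epsilon_{0}$ is fixed).

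For \eqref{occ3} the scheme is identical, comparing $H_{N,R}^{\epsilon_{0}}$ with $F_{3/2,N,R}$. There are now three places where the two densities differ: the cubic inside the second $\chi_{R}$, the nonlinear part $R_{1}(S_{N}^{\epsilon_{0}}\pi_{N}u)$ inside the third cutoff $\chi_{R}(E_{N}^{\epsilon_{0}}(\pi_{N}u)-\alpha_{N})$ coming from the identity $E_{N}^{\epsilon_{0}}(\pi_{N}u)=\|\pi_{N}u\|_{\dot H^{1}}^{2}+R_{1}(S_{N}^{\epsilon_{0}}\pi_{N}u)$, and the term $R_{3/2}(S_{N}^{\epsilon_{0}}u)$ in the exponential. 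Each is handled by the same Lipschitz/mean-value reduction to a Wiener-chaos defect supported at frequencies $\sim N$, which vanishes in $L^{q}(d\mu_{3/2})$ as $N\to\infty$; the uniform exponential bound is imported from the analogous estimates of \cite{TVjmpa,TVimrn}. As $\mu_{3/2}$ is supported on functions strictly smoother than those in the support of $\mu_{1}$, these analytic estimates are, if anything, easier than in the case of \eqref{occ}.
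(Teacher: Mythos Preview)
Your reduction to $L^1$-convergence of the densities and the subsequent Lipschitz/mean-value/H\"older strategy is correct and does prove the proposition, but it is not the route taken in the paper. The paper instead argues softly: Lemma~\ref{a.e.} establishes only \emph{a.e.\ convergence along subsequences} of $F_{N,R}^{\epsilon_0}-F_{N,R}$ (by combining $L^2(d\mu_1)$-convergence of the cubic defect \eqref{first} with deterministic pointwise convergence of the remaining lower-order terms), and the proposition is then deduced by applying Egoroff's theorem to upgrade this to uniform convergence off a set $\Omega_\epsilon^c$ of small $\mu_1$-measure, while on $\Omega_\epsilon^c$ one uses H\"older together with the uniform $L^p$-bounds of Lemma~\ref{unifb} on the two ``pure'' densities $F_{N,R}^{\epsilon_0}$ and $F_{N,R}$. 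Your approach is more direct and in principle quantitative (no subsequences, no Egoroff, and a rate if the chaos defects decay polynomially), but it requires two extra ingredients that the paper's argument avoids: first, an $L^q$-upgrade of all the defects (which you obtain via hypercontractivity from the $L^2$ estimates, though the paper only checks pointwise convergence for the quartic/quintic pieces); second, uniform $L^{q'}$-bounds on genuinely \emph{hybrid} quantities such as $\chi_R(\|\pi_N u\|_{L^2})\chi_R(A_N+C_N)e^{-R_1(S_N^{\epsilon_0}u)}$, where the cutoff carries the sharp cubic $C_N$ but the exponential the smoothed $R_1(S_N^{\epsilon_0}u)$---these arise inevitably from any telescoping and are not literally covered by Lemma~\ref{unifb}, though (as you correctly note) the Nelson-type estimate of \cite{tz,TVens} is robust enough to deliver them. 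The paper's Egoroff route trades the rate for needing only Lemma~\ref{unifb} exactly as stated.
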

The next lemma will be of importance in the sequel.
\begin{lem}\label{unifb}
For every fixed $R>0$, $\epsilon_0>0$, $p\in [1, \infty)$ we have
$$\sup_N \{\big \|F_{N,R}^{\epsilon_0}
\big \|_{L^p(d\mu_1)}, \big \|H_{N,R}^{\epsilon_0}
\big \|_{L^p(d\mu_{3/2})}\}< \infty.$$
\end{lem}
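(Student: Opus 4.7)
I would prove Lemma~\ref{unifb} by exploiting the Wiener chaos structure of the exponents combined with the cut-offs, following the template of the analogous uniform $L^p$ bounds in \cite{tz,TVens,TVjmpa}. I focus on $F_{N,R}^{\epsilon_0}$ first; the argument for $H_{N,R}^{\epsilon_0}$ is structurally identical.

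Using the explicit form of $E_1$, one has
$$\|S_N^{\epsilon_0}u\|_{\dot H^1}^2 - E_1(S_N^{\epsilon_0}u) = -\tfrac{3}{4}Q_N(u) - \tfrac{1}{8}\int (S_N^{\epsilon_0}u)^4\,dx,$$
with $Q_N(u)=\int (S_N^{\epsilon_0}u)^2\mathcal{H}\partial_x(S_N^{\epsilon_0}u)\,dx$. The quartic piece is non-negative, so its exponential is $\leq 1$; the task reduces to a uniform bound on $\int \chi_R^p\,\chi_R^p\,\exp(-\tfrac{3p}{4}Q_N)\,d\mu_1$. Under $d\mu_1$, the Fourier coefficients are $\hat u(n)=g_n/|n|$, and $Q_N$ becomes the trilinear form
$$Q_N=\sum_{n_1+n_2+n_3=0,\;n_j\neq 0}\frac{\psi_{\epsilon_0}(n_1/N)\psi_{\epsilon_0}(n_2/N)\psi_{\epsilon_0}(n_3/N)}{|n_1|\,|n_2|}\,g_{n_1}g_{n_2}g_{n_3}.$$
Since $n_1+n_2+n_3=0$ with all $n_j\neq 0$ forbids every pairing $n_i=-n_j$ (the third index would vanish), $Q_N$ is already Wick-ordered in the third Wiener chaos. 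Proposition~\ref{orth1} then yields $\|Q_N\|_{L^2(d\mu_1)}^2\lesssim\sum_{n_1,n_2\neq 0}|n_1|^{-2}|n_2|^{-2}<\infty$ uniformly in $N$ and $\epsilon_0$, and Gaussian hypercontractivity upgrades this to $\|Q_N\|_{L^p(d\mu_1)}\lesssim_p 1$.

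The subtle point is that a third chaos only enjoys stretched-exponential tails $\exp(-c\lambda^{2/3})$, so $\exp(-cQ_N)$ is \emph{not} unconditionally integrable, and the cut-off on the Wick-renormalized quantity $X_N=\|\pi_N u\|_{\dot H^{1/2}}^2-\alpha_N+\tfrac{1}{3}\int(S_N^{\epsilon_0}u)^3$ must be used essentially. Following \cite{TVens,TVjmpa}, I would decompose $Q_N=Q_N^{\leq K}+Q_N^{>K}$ at a frequency threshold $K$: the high-frequency piece has $L^p$ norm decaying as a negative power of $K$, so a Chebyshev argument confines it, while the low-frequency piece is controlled pointwise on the cut-off support by combining Bernstein's inequality for band-limited functions (yielding $\|\pi_K u\|_{L^\infty}\lesssim K^{1/2}R$ once $\|\pi_N u\|_{L^2}\leq R$ is used) with the $X_N$ cut-off. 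Optimizing $K$ as a function of $R$, $p$ and $\epsilon_0$ closes the $L^p$ bound uniformly in $N$.

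The case $H_{N,R}^{\epsilon_0}$ proceeds in the same pattern with $R_{3/2}$ in place of $R_1$: the additional cut-off $\chi_R(E_N^{\epsilon_0}(\pi_N u)-\alpha_N)$ is precisely the Wick renormalization of $\|u\|_{\dot H^1}^2$ at the $\mu_{3/2}$ level and plays the role the $\dot H^{1/2}$ cut-off plays in the $F$ case; the higher regularity of $\mu_{3/2}$ makes every chaos estimate tighter. The main obstacle in both cases is the mismatch between the cubic chaos $Q_N$ appearing in the exponent and the cubic chaos $\int u^3$ controlled by the cut-off: they are different objects, and the decomposition step must be performed carefully to absorb the divergent contribution of $\exp(-cQ_N)$ using only the cut-off on $\int u^3$ together with the cut-offs on the (Wick) lower-regularity norms.
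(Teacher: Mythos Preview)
Your proposal is correct and follows essentially the same approach as the paper, which simply defers to \cite{TVens} and notes that replacing the sharp projectors $\pi_N$ by the smooth $S_N^{\epsilon_0}$ does not affect the argument. Your sketch in fact supplies more detail than the paper's one-line remark, correctly isolating the sign of the quartic term, the third-chaos structure of $Q_N$, and the Bourgain-style frequency splitting that leverages the cut-offs to tame the stretched-exponential tail.
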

The proof follows modulo minor changes in the argument presented in the analysis in \cite{TVens}.
The only difference is that in this paper we use smoothed projectors $S_N^{\epsilon_0}$
in the definition of the approximating measures,
while in \cite{TVens} we use the sharp projectors $\pi_N$.
This difference however does not affect the argument presented in \cite{TVens}.
\begin{lem}\label{a.e.}
Let $\epsilon_0>1$ be fixed and $\sigma>0$ be small. For every sequence $N_k$ in $\N$, there exists a subsequence $N_{k_h}$ such that:
\begin{equation}\label{iones}F_{N_{k_h}, R}^{\epsilon_0}(u)- F_{N_{k_h},R} (u)\rightarrow 0, \hbox{ a.e. (w.r.t. $d\mu_1$) }
u \in H^{1/2-\sigma},\end{equation}
\begin{equation}\label{iones2}H_{N_{k_h}, R}^{\epsilon_0}(u)- H_{N_{k_h},R} (u)\rightarrow 0, \hbox{ a.e. (w.r.t. $d\mu_{3/2}$) }
u \in H^{1-\sigma},\end{equation}
where
$$
F_{N,R}=\chi_R (\|\pi_N u\|_{L^2})
\chi_R \Big(\|\pi_N u\|_{\dot H^{1/2}}^2 -\alpha_N+ 1/3 \int (\pi_N u)^3 dx\Big) 
e^{-R_{1}(\pi_N u)}
$$
and
$$
H_{N,R}=\chi_R (\|\pi_N u\|_{L^2})
\chi_R (E_{1/2}(\pi_N u)) \chi_R (E_1(\pi_N u) - \alpha_N)  e^{-R_{3/2}(\pi_N u)}
$$
are two of the functions introduced in \eqref{density}.
\end{lem}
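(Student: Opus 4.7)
The plan is to show that the two differences converge to $0$ in $L^{1}(d\mu_{1})$ and $L^{1}(d\mu_{3/2})$ respectively; then almost-everywhere convergence along a subsequence follows by the standard Riesz--Fischer extraction. I describe the argument for \eqref{iones} in detail and indicate how to adapt it to \eqref{iones2}.

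The first step is to decompose the difference telescopically,
\[F_{N,R}^{\epsilon_{0}} - F_{N,R} = \chi_{R}(\|\pi_{N}u\|_{L^{2}}) \Big\{ \big[\chi_{R}(X_{N}^{\epsilon_{0}})-\chi_{R}(X_{N})\big] e^{-R_{1}(S_{N}^{\epsilon_{0}}u)} + \chi_{R}(X_{N}) \big[e^{-R_{1}(S_{N}^{\epsilon_{0}}u)}-e^{-R_{1}(\pi_{N}u)}\big] \Big\},\]
where $X_{N}=\|\pi_{N}u\|_{\dot H^{1/2}}^{2}-\alpha_{N}+\tfrac{1}{3}\int(\pi_{N}u)^{3}\,dx$ and $X_{N}^{\epsilon_{0}}$ replaces $\pi_{N}$ by $S_{N}^{\epsilon_{0}}$ in the cubic integral only. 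Using the Lipschitz continuity of $\chi_{R}$, the pointwise bound $|e^{a}-e^{b}|\leq|a-b|(e^{a}+e^{b})$, H\"{o}lder's inequality, and the uniform $L^{p}$ estimates of Lemma~\ref{unifb} to absorb the exponential factors, the problem reduces to showing
\[\|X_{N}^{\epsilon_{0}}-X_{N}\|_{L^{q}(d\mu_{1})} \to 0 \quad\text{and}\quad \|R_{1}(S_{N}^{\epsilon_{0}}u)-R_{1}(\pi_{N}u)\|_{L^{q}(d\mu_{1})} \to 0\]
for some $q>1$.

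The key observation is that both quantities are real multilinear expressions in $u$ whose defining Fourier multiplier involves the difference $\mathbf{1}_{|n|\leq N}-\psi_{\epsilon_{0}}(n/N)$, supported in the thin annulus $(1-\epsilon_{0})N<|n|\leq N$. For the quartic piece of $R_{1}$, one has pointwise $\big|\int[(S_{N}^{\epsilon_{0}}u)^{4}-(\pi_{N}u)^{4}]\big|\lesssim\|(\pi_{N}-S_{N}^{\epsilon_{0}})u\|_{L^{4}}(\|\pi_{N}u\|_{L^{4}}^{3}+\|S_{N}^{\epsilon_{0}}u\|_{L^{4}}^{3})$, and the first factor tends to $0$ $\mu_{1}$-a.e.\ by dominated convergence in Fourier series combined with $H^{1/2-\sigma}\hookrightarrow L^{4}$. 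The cubic term $\int v^{3}$ (entering $X_{N}$) and the cubic-with-derivative $\int v^{2}\mathcal{H}\partial_{x}v$ (entering $R_{1}$) are genuine Wiener-chaos variables; although $R_{1}(\pi_{N}u)$ and $R_{1}(S_{N}^{\epsilon_{0}}u)$ individually need not converge in $L^{q}$, their divergent parts coincide, so the annular difference has bounded variance. Expanding in Fourier and computing the $L^{2}(d\mu_{1})$ norm through the Gaussian orthogonality relations of Propositions~\ref{orth1} and~\ref{orth2}, each contribution reduces to a sum over Wick pairings in which at least one frequency is constrained to the annulus; this is the same computation that yields the Cauchy property of $(F_{N,R})_{N}$ in $L^{q}$ in \cite{TVens,TVjmpa}, and rerun with the annular multiplier it gives the required vanishing as $N\to\infty$.

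The argument for \eqref{iones2} is structurally identical. The exponent $R_{3/2}$ contributes additional cubic-with-derivative, quartic-with-derivative and quintic Wiener-chaos expressions, and the extra factor $\chi_{R}(E_{N}^{\epsilon_{0}}(\pi_{N}u)-\alpha_{N})$ requires estimating $E_{N}^{\epsilon_{0}}(\pi_{N}u)-E_{1}(\pi_{N}u)=R_{1}(S_{N}^{\epsilon_{0}}u)-R_{1}(\pi_{N}u)$; all of these are multilinear expressions with Fourier multiplier supported on the annulus, to be treated by the same combinatorial machinery adapted to chaoses of order $3$, $4$ and $5$. The main obstacle throughout is the careful bookkeeping of the Wick pairings of Propositions~\ref{orth1} and~\ref{orth2} when applied to annulus-restricted multipliers, and in particular the cancellation between the divergent parts of $R_{1}(\pi_{N}u)$ and $R_{1}(S_{N}^{\epsilon_{0}}u)$ (respectively $R_{3/2}(\pi_{N}u)$ and $R_{3/2}(S_{N}^{\epsilon_{0}}u)$). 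Granted the $L^{q}$ bounds, Chebyshev gives convergence in measure for both sequences, and a diagonal subsequence extraction produces the claimed almost everywhere statements.
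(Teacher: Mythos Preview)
Your route is genuinely different from the paper's. The paper does \emph{not} prove $L^{1}$-convergence of $F_{N,R}^{\epsilon_{0}}-F_{N,R}$ and then extract; it works pointwise from the start. It shows the cubic-with-derivative difference goes to zero in $L^{2}(d\mu_{1})$ (hence a.e.\ along a subsequence) by the same annular orthogonality computation you describe, and handles the quartic and $\int u^{3}$ pieces deterministically via Sobolev embedding. The crucial step you do differently is the passage from $R_{N}^{\epsilon_{0}}-R_{N}\to 0$ to $e^{-R_{N}^{\epsilon_{0}}}-e^{-R_{N}}\to 0$: the paper uses that $R_{N}(u)$ itself converges a.e.\ (a result from \cite{TVens}), hence is a.e.\ bounded along the subsequence, so continuity of $\exp$ on bounded sets gives the pointwise conclusion directly. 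No $L^{p}$ control on the exponentials is invoked at this stage.

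Your argument has a gap precisely at that point. After telescoping and applying $|e^{a}-e^{b}|\leq |a-b|(e^{a}+e^{b})$, you want to ``absorb the exponential factors'' via Lemma~\ref{unifb}. But Lemma~\ref{unifb} bounds the \emph{full} densities $F_{N,R}^{\epsilon_{0}}$ and $F_{N,R}$, each with its own matching cutoff. Your telescope produces hybrid objects like $\chi_{R}(\|\pi_{N}u\|_{L^{2}})\chi_{R}(X_{N})\,e^{-R_{1}(S_{N}^{\epsilon_{0}}u)}$ (the $\pi_{N}$-cutoff paired with the $S_{N}^{\epsilon_{0}}$-exponent) and, worse, in the first term after Lipschitzing you lose the $H^{1/2}$-type cutoff altogether, leaving $\chi_{R}(\|\pi_{N}u\|_{L^{2}})\,e^{-R_{1}(S_{N}^{\epsilon_{0}}u)}$. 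Neither of these is covered by Lemma~\ref{unifb} as stated. The fix is not hard---observe that $\chi_{R}(X_{N}^{\epsilon_{0}})-\chi_{R}(X_{N})$ vanishes unless one of $|X_{N}^{\epsilon_{0}}|,|X_{N}|\leq 2R$, so you may insert an indicator and then dominate by $F_{N,3R}^{\epsilon_{0}}$ or $F_{N,3R}$; alternatively, the \emph{proof} of Lemma~\ref{unifb} in \cite{TVens} only uses the cutoffs to bound Sobolev norms, and since $\|S_{N}^{\epsilon_{0}}u\|_{H^{s}}\leq\|\pi_{N}u\|_{H^{s}}$ the mixed density satisfies the same tail estimate---but you do need to say this. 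The paper's pointwise approach sidesteps the issue entirely, which is its main advantage; your $L^{1}$ approach, once patched, has the merit of being more systematic and not relying on the a.e.\ convergence of $R_{N}$ as an external input.
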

\begin{proof}
First we focus on the proof of \eqref{iones}. Notice that if we prove
\begin{equation}\label{first}
\|\int (S_N^{\epsilon_0} u)^2 {\mathcal H} \partial_x (S_N^{\epsilon_0} u) - 
\int (\pi_N u)^2 {\mathcal H} \partial_x (\pi_N u) \|_{L^2(d\mu_1)}
\rightarrow 0 \hbox{ as } N\rightarrow \infty,
\end{equation}
then up to subsequence we get
$$|\int (S_N^{\epsilon_0} u)^2 {\mathcal H} \partial_x (S_N^{\epsilon_0} u) - 
\int (\pi_N u)^2 {\mathcal H} \partial_x (\pi_N u) |\rightarrow 0, \hbox{ a.e. (w.r.t. $d\mu_1$) }
u \in H^{1/2-\sigma}.$$
On the other hand 
\begin{equation}\label{L4}\int (\pi_N u)^4- \int (S_N^{\epsilon_0} u)^4 \rightarrow 0,
\forall u \in H^{1/2-\sigma},\end{equation}
provided that $\sigma>0$ is small enough in such a way that
$H^{{1/2}-\sigma}\subset L^4$.
Hence summarizing we get
\begin{equation}\label{pasa}|R_{N_{k_h}}^{\epsilon_0}(u) - R_{N_{k_h}}(u)|\rightarrow 0 
\hbox{ as } h\rightarrow \infty \hbox{ a.e. (w.r.t. $d\mu_1$) }
u \in H^{1/2-\sigma},\end{equation}
where: $$R_N(u)= 3/4 \int (\pi_N u)^2 {\mathcal H} \partial_x (\pi_N u)+ 1/8\int (\pi_N u)^4$$
and $$R_N^{\epsilon_0}=3/4 \int (S_N^{\epsilon_0} u)^2 {\mathcal H} \partial_x (S_N^{\epsilon_0} u)
+1/8\int (S_N^{\epsilon_0} u)^4.$$
Recall also that following \cite{TVens} one can show that there exists $L$ such
that 
$$
\int (\pi_N u)^2 {\mathcal H} \partial_x (\pi_N u)\rightarrow L
$$ 
in $L^2(d\mu_{1})$,  in particular we have up to
subsequence convergence a.e.
w.r.t. $d\mu_1$ and hence we can assume that up to subsequence
$R_N(u)$ is bounded a.e. w.r.t. $d\mu_1$. By combining this fact with \eqref{pasa}
we deduce:
\begin{equation}\label{fin1}\lim_{k\rightarrow \infty} \exp(-R_{N_{k_h}}^{\epsilon_0}(u))-
\exp(-R_{N_{k_h}}(u))=0,
\hbox{ a.e. (w.r.t. $d\mu_1$) }
u \in H^{1/2-\sigma}.
\end{equation}
On the other hand we have
$$\int (\pi_N u)^3- \int (S_N^{\epsilon_0} u)^3 \rightarrow 0,
\forall
u \in H^{1/2-\sigma},$$
and hence
\begin{equation}\label{fin2}\lim_{N\rightarrow \infty} 
\Big[\chi_R (\|\pi_N u\|_{H^{1/2}}^2 -\alpha_N+ 1/3 \int (S_N^{\epsilon_0} u)^3)
\end{equation}
$$
- \chi_R (\|\pi_N u\|_{H^{1/2}}^2 -\alpha_N+ 1/3 \int (\pi_N u)^3)\Big]
=0 \hbox{ a.e. (w.r.t. $d\mu_1$) }
u \in H^{1/2-\sigma}.$$
We conclude by combining \eqref{fin1}
and \eqref{fin2}.\\
Next we focus on \eqref{first}, whose proof follows by
$$\big\| \sum_{\substack{(j, k,l)\in \Z^3\\ 0<|i| |j|,|k|\leq N\\ j+k+l=0}} \frac {1}{|j||k|}
(1
- \psi_{\epsilon_0} (j/N) \psi_{\epsilon_0} (k/N)\psi_{\epsilon_0} (l/N))g_j g_k 
g_l \big\|_{L^2_\omega}^2\rightarrow 0 \hbox{ as } 
N\rightarrow \infty,$$
that in turn, by an orthogonality argument (as in \cite{TVjmpa}), is equivalent to:
$$
 \sum_{\substack{(j, k,l)\in \Z^3\\ 0<|j|,|k|, |l|\leq N\\ j+k+l=0}} \frac {1}{|j|^2|k|^2}
|1
- \psi_{\epsilon_0} (j/N) \psi_{\epsilon_0} (k/N)\psi_{\epsilon_0} (l/N)|^2\rightarrow 0 \hbox{ as } 
N\rightarrow \infty.$$
Notice that due to the cut--off $\psi_\epsilon$ we can restrict the sum on the set
$$\{(j,k,l)\in \Z^3| j+k+l=0, 0<|j|,|k|, |l|\leq N, \max\{|j|/N, |k|/N, |l|/N\}\geq (1-\epsilon_0)\}
$$
and hence we can control the sum above
by 
$$\sum_{|j|,|k|>(1-\epsilon_0)N/2} \frac1{|j|^2|k|^2}\rightarrow 0 \hbox{ as } 
N\rightarrow \infty.$$
The proof of \eqref{iones2} is similar to the proof of \eqref{iones},
provided that we show:
\begin{equation}\label{firstion}
\|\int (S_N^{\epsilon_0} u) (S_N^{\epsilon_0} u_x)^2 - 
(\pi_N u) (\pi_N u_x)^2\|_{L^2(d\mu_1)}
\rightarrow 0 \hbox{ as } N\rightarrow \infty,
\end{equation}
\begin{equation}\label{firstiontwo}
\|\int (S_N^{\epsilon_0} u) ({\mathcal H}S_N^{\epsilon_0} u_x)^2 - 
(\pi_N u) ({\mathcal H} \pi_N u_x)^2\|_{L^2(d\mu_1)}
\rightarrow 0 \hbox{ as } N\rightarrow \infty,
\end{equation}
and
\begin{align}\label{1ion}|\int (S_N^{\epsilon_0} u)^3 
{\mathcal H}(S_N^{\epsilon_0}u)_x 
- \int (\pi_N u)^3 {\mathcal H}(\pi_N u)_x|\rightarrow 0,
\\\label{2ion}
|\int (S_N^{\epsilon_0} u)^2{\mathcal H}(S_N^{\epsilon_0}uS_N^{\epsilon_0} u_x)
- \int (\pi_N u)^2{\mathcal H}(\pi_N u \pi_N u_x)|\rightarrow 0,
\\\label{3ion}
|\int 
(S_N^{\epsilon_0}u)^5 - \int (\pi_N u)^5|\rightarrow 0,
\\\nonumber
\hbox{ a.e. (w.r.t. $d\mu_1$) }
u \in H^{1-\sigma}\end{align}
The proof of \eqref{3ion} follows by the Sobolev embedding $H^{1-\sigma}\subset L^5$.
To prove \eqref{1ion} (and by a similar argument \eqref{2ion})
we use the following inequality (that follows by fractional integration by parts, 
see page 283 in \cite{TVens}):
$$|\int v_1v_2 v_3{\mathcal H}\partial_x v_4 dx|
\leq C ( \|v_2\|_{L^{\infty}}
\|v_3\|_{L^{\infty}} \|v_1\|_{H^{1/2}}\|v_4\|_{H^{1/2}}+ $$$$
\|v_1\|_{L^{\infty}}
\|v_3\|_{L^{\infty}} \|v_2\|_{H^{1/2}}\|v_4\|_{H^{1/2}} 
+ \|v_1\|_{L^{\infty}}
\|v_2\|_{L^{\infty}} \|v_3\|_{H^{1/2}}\|v_4\|_{H^{1/2}})$$ and hence 
\eqref{1ion} follows
provided that $\|S_N^{\epsilon_0} u-\pi_N u\|_{L^\infty}\rightarrow 0$,
 $\|S_N^{\epsilon_0} u-\pi_N u\|_{H^{1/2}}\rightarrow 0
\hbox{ a.e. (w.r.t. $d\mu_{3/2}$) }
u \in H^{1-\sigma}.$ The second estimate is trivial and the first one follows
since we can select $\rho,p>0$ in such a way that
$W^{\rho,p}\subset L^\infty$ and also $\|v\|_{W^{\rho,p}}<\infty
\hbox{ a.e. (w.r.t. $d\mu_1$) }
u \in H^{1-\sigma}$ (see Proposition 4.2 in \cite{TVens} ). 
The proof of \eqref{firstion} and \eqref{firstiontwo} 
follows the same orthogonality argument as the proof of \eqref{first}.
More precisely we get
\begin{align*}\sum_{\substack{(j,k,l)\in \Z^3, j+k+l=0\\ 0<|j|,|k|, |l|\leq N\\ \max\{|j|/N, |k|/N, |l|/N\}\geq (1-\epsilon_0)}} \frac 1{|j||k||l|^3}
\lesssim \sum_{0<|j|,|k|\leq N, |l|>N(1-\epsilon_0)} \frac 1{|j||k||l|^3}
\\\nonumber + 
\sum_{\substack{j+k+l=0\\0<|j|,|k|\leq N, |j|>N(1-\epsilon_0)}} \frac 1{|j||k||l|^3}
= O(\frac 1{N^\alpha})\end{align*} 
for some $\alpha>0$. In the last estimate we used \cite{tz} (end of page 500).
\end{proof}
\noindent {\em Proof of Proposition \ref{approx}.}
The proof of \eqref{occ} and \eqref{occ3} since now on are the same,
hence we focus on the first one.
It is sufficient to prove that given any sequence $N_k$ in $\N$
there exists a subsequence $N_{k_h}$ such that \eqref{occ} occurs.
Recall that 
$$\lim_{N\rightarrow \infty}
\sup_{A\in {\mathcal B}(H^{1/2-\sigma})}
|\int_{A} F_{N, R} d\mu_1- \int_A d\rho_{1,R}|=0.$$
By combining Lemma \ref{a.e.} with the Egoroff theorem we get 
that, up to subsequence, for every $\epsilon>0$ there exists $\Omega_\epsilon\subset H^{1/2-\sigma}$, with $\sigma>0$,
such that $\mu_1(\Omega_\epsilon)<1-\epsilon$
and
$F^{\epsilon_0}_{N, R}(u) - F_{N, R}(u)\rightarrow 0 \hbox{ in } L^{\infty}(\Omega_\epsilon).$
As a consequence we get
\begin{equation}\label{primhol}|\int_{A\cap \Omega_\epsilon}  F^{\epsilon_0}_{N, R}(u) d\mu_1
- \int_{A\cap \Omega_\epsilon}  F_{N, R}(u) d\mu_1|<\epsilon
\hbox{ for } N>N(\epsilon).\end{equation}
On the other hand by the H\"older inequality
\begin{align}\label{sechol}|\int_{A\cap \Omega_\epsilon^c}  &F^{\epsilon_0}_{N, R}(u) d\mu_1
- \int_{A\cap \Omega_\epsilon^c}  F_{N, R}(u) d\mu_1|\\\nonumber &
\lesssim 
\sup_N \Big(\|  F^{\epsilon_0}_{N, R}\|_{L^2(d\mu_1)} +  \|F_{N, R}\|_{L^2(d\mu_1)}\Big)
\times | \mu_1(\Omega_\epsilon^c) |^{1/2} \lesssim \epsilon^{1/2}\end{align}
where we used Lemma \ref{unifb} and \cite[Proposition 6.5]{TVens}.
The proof follows by combining \eqref{primhol}
with \eqref{sechol}.

\hfill$\Box$

\section{A-priori Gaussian bounds w.r.t. $d\mu_1$}

Recall that for every $\epsilon>0$ we denote by $\psi_\epsilon$ any function
that satisfies \eqref{psiepsilon} and by $S^\epsilon_N$
the associated multiplier defined by \eqref{SepsilonN}.
The main aim of this section is the proof
of the following result.
\begin{prop}\label{spcaart}
Let us denote by $S$ the family of operators 
$S_N^\epsilon$,
for every $N\in \N$, $\epsilon>0$. Then we have: 
\begin{align}\label{1y}
\|\int S \varphi {\mathcal H} (S \varphi_{x}) 
S \varphi S \varphi_x -S\varphi {\mathcal H} (S\varphi_x)  S^2  (S\varphi S\varphi_x)\|_{L^2(d\mu_1(\varphi))}=O(\sqrt \epsilon);
\\\label{2y} 
\|\int  (S\varphi)^3 (S\varphi S\varphi_x)
- (S\varphi)^3 S^2  (S\varphi S\varphi_x)
\|_{L^2(d\mu_1(\varphi))}=O(\epsilon) + O(\frac {\ln N}{\sqrt N}).
\end{align}
\end{prop}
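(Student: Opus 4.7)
The plan is to expand both random integrals in Fourier series using $\varphi(\omega,x)=\sum_{n\neq 0}g_n(\omega)|n|^{-1}e^{\mathbf{i}nx}$, so that the integrand of \eqref{1y} becomes a quadrilinear expression in the Gaussians,
\[I_1=\mathbf{i}\sum_{\substack{j_1+j_2+j_3+j_4=0\\|j_k|\leq N,\ j_k\neq 0}}\frac{[1-\psi_\epsilon^2((j_3+j_4)/N)]\,\mathrm{sgn}(j_4)\prod_k\psi_\epsilon(j_k/N)}{|j_1|\,|j_3|}\,g_{j_1}g_{j_2}g_{j_3}g_{j_4},\]
and the integrand of \eqref{2y} becomes an analogous quintic expression carrying the cutoff factor $1-\psi_\epsilon^2((j_4+j_5)/N)$ together with denominator $|j_1||j_2||j_3||j_4|$. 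The only source of smallness is that $1-\psi_\epsilon^2(\cdot/N)$ is supported on the shell $|m|\geq(1-\epsilon)N$.

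To compute $\|I\|_{L^2(d\mu_1)}^2=\mathbb{E}|I|^2$ I apply Wick's theorem and the orthogonality relations of Section~\ref{orth}. By Propositions~\ref{orth1}--\ref{orth2}, the resulting $8$-fold (resp.\ $10$-fold) sum collapses to a finite collection of diagonal matchings of the multisets of indices, plus resonant terms coming from internal pair contractions. The crucial algebraic observation, which drives the whole estimate, is that any Wick contraction pairing the two indices appearing in the cutoff (namely $(j_3,j_4)$ for \eqref{1y} or $(j_4,j_5)$ for \eqref{2y}) forces that pair to sum to zero, whence $1-\psi_\epsilon^2(0)=0$ and the contribution vanishes. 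So only contractions leaving the ``derivative pair'' uncontracted survive.

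For the surviving terms in \eqref{1y}, one is reduced to estimating sums of the shape
\[\sum_{\substack{\sum j_k=0,\ |j_k|\leq N\\|j_3+j_4|\geq(1-\epsilon)N}}\frac{1}{|j_\alpha|\,|j_\beta|\,|j_\gamma|\,|j_\delta|}\]
for a few tuples $(\alpha,\beta,\gamma,\delta)$ dictated by the matching. The shell condition forces $\max(|j_3|,|j_4|)\gtrsim N$, and combined with the convergent lattice sums $\sum_{|j|\leq N}|j|^{-2}\lesssim 1$ and $\sum_{(1-\epsilon)N\leq|j|\leq N}|j|^{-2}\lesssim \epsilon/N$, one obtains $\mathbb{E}[I_1^2]\lesssim \epsilon$, giving $\|I_1\|_{L^2}=O(\sqrt\epsilon)$. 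For \eqref{2y} the analogous quintic analysis produces the $O(\epsilon)$ contribution from the diagonal matchings (the extra $|j|^{-1}$ factor in the denominator improves convergence), while the surviving single pair contraction -- which necessarily involves indices among $\{j_1,j_2,j_3\}$, since any contraction touching $(j_4,j_5)$ is killed -- reduces the sum to a triple restricted lattice sum over frequencies in the shell, handled by the elementary computation used at the end of page $500$ of~\cite{tz}, and contributing the $O(\ln N/\sqrt N)$ correction.

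The main technical difficulty is the systematic bookkeeping of Wick pairings and the asymmetric denominator configurations they produce: one must verify, for each pairing, that the cutoff either annihilates the contribution or leaves a sum controllable by the above restricted-lattice bounds. In particular, for \eqref{2y} the precise identification of which pair contractions remain compatible with the cutoff is what makes the resonant contribution achieve the sharp rate $\ln N/\sqrt N$ rather than a worse power; and tracking the $\mathrm{sgn}(j_4)$ (resp.\ $\mathrm{sgn}(j_5)$) sign throughout is needed in order to make use of Propositions~\ref{orth1}--\ref{orth2} rather than a lossier absolute-value bound.
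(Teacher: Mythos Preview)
Your proposal has a genuine gap in the treatment of the diagonal matchings for \eqref{1y}. You claim that after applying orthogonality the surviving sums are of the shape $\sum \frac{1}{|j_\alpha||j_\beta||j_\gamma||j_\delta|}$ with the shell restriction $|j_3+j_4|\gtrsim N$, and that these are $O(\epsilon)$. But the identity matching contributes $\sum_j |c(j)|^2 = \sum_{|j_3+j_4|>N(1-\epsilon)} |j_1|^{-2}|j_3|^{-2}$, and this is \emph{not} $O(\epsilon)$: take $j_1=j_3=1$; then $j_4$ ranges over a shell of $\sim\epsilon N$ values, giving a contribution $\gtrsim \epsilon N$. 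The shell forces $\max(|j_3|,|j_4|)\gtrsim N$, but if it is $|j_4|$ that is large, the denominator $|j_1|^{-2}|j_3|^{-2}$ gains nothing. What rescues the estimate, and what the paper actually does, is an algebraic cancellation in the \emph{symmetrized} coefficient: on the region where (in the paper's variables $a,b,c,d$ with denominator $|a|^{-1}|b|^{-1}$ and sign $\mathrm{sgn}(d)$) one has $c\cdot d<0$, the swap $(c,d)\mapsto(d,c)$ together with $(a,b)\mapsto(b,a)$ flips the sign while leaving the weight invariant, and the contributions cancel pointwise. The paper implements this via a case decomposition (Lemmas~\ref{thir}, \ref{thirdf56fght} and the $c\cdot d<0$ parts of Lemmas~\ref{thirdf}, \ref{thirdf5678}), so that an $L^2$ estimate is only needed on regions where at least one of the \emph{denominator} variables $a,b$ lies in the shell; there the factor $\sum_{(1-\epsilon)N<|a|\leq N}|a|^{-2}=O(\epsilon/N)$ supplies the gain. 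Your sketch says the sign must be ``tracked'' but never uses it to produce a cancellation, and an absolute-value bound on each matching separately cannot close.

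A smaller but related issue in your treatment of \eqref{2y}: you assert that ``any contraction touching $(j_4,j_5)$ is killed'', hence the surviving pair contractions lie within $\{j_1,j_2,j_3\}$. Only the contraction $j_4=-j_5$ is annihilated by the cutoff; contractions such as $j_1=-j_5$ or $j_3=-j_4$ survive, and in fact these are precisely the ones responsible for the $O(\ln N/\sqrt N)$ contribution (see the paper's treatment of the sets $\tilde{\mathcal A}^c_{N,a=-e}$, $\tilde{\mathcal A}^c_{N,c=-d}$, etc., which requires the delicate telescoping sum at the end of the proof).
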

The estimate \eqref{1y}
is equivalent to:
\begin{equation}\label{equiv1y}
\|\sum_{{(a,b,c,d)\in \mathcal A}_N(4)} \Lambda_N^\epsilon (a,b,c,d)
\frac {sign (d)}{|a||b|} g_ag_{b}g_cg_{d}\|_{L^2_\omega}=O(\sqrt \epsilon)
\end{equation}
where $g_e$ are the Gaussian independent variables in \eqref{series},
\begin{equation}\label{Deltamm}\Lambda_N^\epsilon (a,b,c,d)= \psi_\epsilon(\frac{a}N) \psi_\epsilon(\frac{b}N)\psi_\epsilon(\frac{c}N)
\psi_\epsilon(\frac{d}N) [\psi_\epsilon^2(\frac{a+c}N) - 1]
\end{equation}
and
\begin{equation}\label{giglMITmm}{\mathcal A}_N(4)=\{(a,b,c,d)\in {\Z}^4|0<|a|, |b|, |c|, |d|
\leq N, a+b+c+d=0\}.\end{equation}
The proof of \eqref{equiv1y} (and hence \eqref{1y}) is splitted in several lemmas.
\begin{lem}\label{thir}
We have
$\sum_{{(a,b,c,d)\in \mathcal B}_N^\epsilon} \Lambda_N^\epsilon (a,b,c,d)
\frac {sign (d)}{|a||b|} g_ag_{b}g_cg_{d}=0$,
where 
$${\mathcal B}_N^\epsilon=\{(a,b,c,d)\in {\mathcal A}_N(4)| 0<|a|, |b|, |c|, |d| \leq N(1-\epsilon)\}$$
for every $N\in \N$, $\epsilon>0$.
\end{lem}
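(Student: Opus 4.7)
The plan is to exploit a discrete involution of the summation set together with an elementary sign-analysis of the support of $\Lambda_N^\epsilon$. I would first observe that on $\mathcal{B}_N^\epsilon$ all four single-variable cutoffs $\psi_\epsilon(a/N),\psi_\epsilon(b/N),\psi_\epsilon(c/N),\psi_\epsilon(d/N)$ equal $1$, since $|a|/N,\dots,|d|/N\le 1-\epsilon$ and $\psi_\epsilon\equiv 1$ on $[-(1-\epsilon),1-\epsilon]$ by \eqref{psiepsilon} and evenness. Consequently \eqref{Deltamm} collapses on $\mathcal{B}_N^\epsilon$ to $\Lambda_N^\epsilon(a,b,c,d)=\psi_\epsilon^2((a+c)/N)-1$.

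Next I would introduce the involution $\iota\colon(a,b,c,d)\mapsto(b,a,d,c)$, which clearly maps $\mathcal{B}_N^\epsilon$ bijectively to itself. Under $\iota$ the commutative product $g_ag_bg_cg_d$ is unchanged, the weight $1/(|a||b|)$ is unchanged, and (using $b+d=-(a+c)$ together with the evenness of $\psi_\epsilon$) the factor $\psi_\epsilon^2((a+c)/N)$ is also preserved; the only change is that $\mathrm{sign}(d)$ becomes $\mathrm{sign}(c)$. Relabelling the summation variable via $\iota$ therefore yields
\[
\sum_{\mathcal{B}_N^\epsilon}\Lambda_N^\epsilon(a,b,c,d)\,\frac{\mathrm{sign}(d)}{|a||b|}\,g_ag_bg_cg_d=\sum_{\mathcal{B}_N^\epsilon}\Lambda_N^\epsilon(a,b,c,d)\,\frac{\mathrm{sign}(c)}{|a||b|}\,g_ag_bg_cg_d,
\]
so twice the sum of interest equals $\sum_{\mathcal{B}_N^\epsilon}\Lambda_N^\epsilon(a,b,c,d)\cdot(\mathrm{sign}(c)+\mathrm{sign}(d))/(|a||b|)\cdot g_ag_bg_cg_d$.

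The main step is then to verify that this symmetrised summand vanishes pointwise. Since $\Lambda_N^\epsilon\ne 0$ requires $\psi_\epsilon^2((a+c)/N)\ne 1$, the fact that $\psi_\epsilon\equiv 1$ on $[-(1-\epsilon),1-\epsilon]$ forces $|a+c|>N(1-\epsilon)$. Assuming without loss of generality $a+c>N(1-\epsilon)$ and using $|a|,|c|\le N(1-\epsilon)$, one checks that $a$ and $c$ must both be strictly positive (else, e.g., $a\le 0$ gives $c\ge a+c>N(1-\epsilon)$, contradicting $|c|\le N(1-\epsilon)$). Then $b+d=-(a+c)<-N(1-\epsilon)$ together with $|b|,|d|\le N(1-\epsilon)$ forces $b,d<0$ by the same reasoning. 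Hence $c$ and $d$ have opposite signs, so $\mathrm{sign}(c)+\mathrm{sign}(d)=0$; the case $a+c<-N(1-\epsilon)$ is identical upon negation. The only mild subtlety is identifying the correct involution: the simultaneous swaps in $(a,b)$ and in $(c,d)$ are essential, since together they preserve the weight $\psi_\epsilon^2((a+c)/N)$ via the constraint $b+d=-(a+c)$, whereas either swap alone would spoil it.
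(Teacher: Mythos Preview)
Your proof is correct and follows essentially the same strategy as the paper: both rest on the observation that on $\mathcal{B}_N^\epsilon$ the nonvanishing of $\Lambda_N^\epsilon$ forces $c$ and $d$ to have opposite signs, together with the $(a\leftrightarrow b,\,c\leftrightarrow d)$ symmetry of the remaining factors. The paper presents this as a case split ($c\cdot d>0$ giving $\Lambda_N^\epsilon=0$ directly, $c\cdot d<0$ handled by a four-term grouping over $(a,b,c,d),(a,b,d,c),(b,a,c,d),(b,a,d,c)$), whereas you apply the single involution $(a,b,c,d)\mapsto(b,a,d,c)$ first and then observe $\mathrm{sign}(c)+\mathrm{sign}(d)=0$ on the support of $\Lambda_N^\epsilon$; your packaging is a bit more economical, but the content is the same.
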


\begin{proof}
Let us fix $(a,b,c,d)\in {\mathcal B}_N^\epsilon$. 
First we assume $c,d>0$ (the case $c,d<0$ is similar). By the condition $a+b+c+d=0$ we deduce 
that $\min\{a,b\}<0$. Since we are assuming $0<|a|, |b|, |c|, |d|\leq N(1-\epsilon)$,
we get $|a+c|=|b+d|<N(1-\epsilon)$. 
Hence we obtain by the cut--off properties of $\psi_\epsilon$ that $\Lambda_N^\epsilon (a,b,c,d)=0$.\\
Hence we have to consider the case $c\cdot d<0$. Under the extra assumption
$a\neq b$ we deduce that the vectors 
$(a,b,c,d), (a,b,d,c), (b,a, c,d), (b,a,d,c)$ are distinct and belong to ${\mathcal B}_N^\epsilon$. Moreover
$g_ag_bg_cg_d=g_ag_bg_dg_c=g_bg_ag_cg_d=g_bg_ag_dg_c$ and
by simple algebra (recall $a+b+c+d=0$) we get:
$$
\frac{1}{|a||b|} [\Lambda_N^\epsilon (a,b,c,d)sign(d) +\Lambda_N^\epsilon (a,b,d,c)sign(c)$$
$$+\Lambda_N^\epsilon (b,a,c,d)sign(d)+\Lambda_N^\epsilon (b,a,d,c)sign(c)]
=0.$$
The same argument works for $a=b$
(in fact in this case $(a,a,c,d),(a,a,d,c)\in {\mathcal B}_N^\epsilon$ are distinct
since $c$ and $d$ have opposite sign,
$g_ag_ag_cg_d=g_ag_ag_dg_c$ and we have the identity $\frac 1{a^2}\Lambda_N^\epsilon (a,a,c,d)sign(d)
+ \frac 1{a^2} \Lambda_N^\epsilon (a,a,d,c)sign(c)=0$).
 The proof is concluded.
\end{proof}

\begin{lem}\label{thirdf}
We have
$$\sup_N \|\sum_{(a,b,c,d)\in {\mathcal C}_N^\epsilon}
\Lambda_N^\epsilon (a,b,c,d)
\frac {sign (d)}{|a||b|} g_ag_{b}g_cg_{d}\|_{L^2_\omega}=O(\sqrt \epsilon),$$
where 
${\mathcal C}_N^\epsilon=\{(a,b,c,d)\in {\mathcal A}_N(4)|
\max\{|a|, |b|\}>N(1-\epsilon)\}$
and $N\in \N, \epsilon>0$.
\end{lem}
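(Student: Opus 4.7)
The plan is to expand the squared $L^2_\omega$ norm, split the summation according to whether $(a,b,c,d)\in\tilde{\mathcal A}(4)$ (no internal pair summing to zero) or not, apply Proposition \ref{orth1} to the generic part, and treat the internal pairings by an explicit case analysis. Write $X=X_1+X_2$, where $X_1$ collects the tuples in $\tilde{\mathcal A}(4)\cap{\mathcal C}_N^\epsilon$ and $X_2$ the remaining ones in $\tilde{\mathcal A}^c(4)\cap{\mathcal C}_N^\epsilon$. Since $|\Lambda_N^\epsilon|\leq 2$ on its support, every constant produced is absolute.

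\emph{Generic part.} For $X_1$, Proposition \ref{orth1} collapses the cross expectations: in $\mathbb{E}[X_1\overline{X_1}]$ only pairs of $4$-tuples with identical multisets $\{a,b,c,d\}=\{a',b',c',d'\}$ contribute, and the at most $4!$ permutations produce only an absolute combinatorial factor. This reduces the estimate to
\[ \|X_1\|_{L^2_\omega}^2\lesssim\sum_{(a,b,c,d)\in\tilde{\mathcal A}(4)\cap{\mathcal C}_N^\epsilon}\frac{1}{|a|^2|b|^2}. \]
By the symmetry in $a,b$ of the defining condition of ${\mathcal C}_N^\epsilon$, restrict to $|a|>N(1-\epsilon)$. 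For each such $a$, the inner sum over $(b,c,d)$ with $a+b+c+d=0$ and $|b|,|c|,|d|\leq N$ is bounded, for each $b$, by the $O(N)$ admissible values of $c$, hence by $\sum_{0<|b|\leq N}|b|^{-2}\cdot O(N)\lesssim N$. Combined with $\sum_{N(1-\epsilon)<|a|\leq N}|a|^{-2}\lesssim\epsilon/N$, this yields $\|X_1\|_{L^2_\omega}^2=O(\epsilon)$.

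\emph{Paired part.} For $X_2$, the constraint $a+b+c+d=0$ together with an internal vanishing pair forces exactly one of the three cases: (i) $a+b=0=c+d$; (ii) $a+c=0=b+d$; (iii) $a+d=0=b+c$. In case (ii) the tuple is $(a,b,-a,-b)$ and, using $\psi_\epsilon(0)=1$, one computes
\[ \Lambda_N^\epsilon(a,b,-a,-b)=\psi_\epsilon^2(a/N)\psi_\epsilon^2(b/N)\bigl[\psi_\epsilon^2(0)-1\bigr]=0, \]
so case (ii) contributes nothing. In cases (i) and (iii) the summand reduces to $\alpha(a,c)|g_a|^2|g_c|^2$ and $\alpha(a,b)|g_a|^2|g_b|^2$ respectively, and one handles the mean and the variance separately. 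For the mean, since $\mathbb{E}[|g_a|^2|g_c|^2]=1$ whenever $a\neq\pm c$, the antisymmetries $c\mapsto -c$ and $a\mapsto -a$ (using the evenness of $\psi_\epsilon$) produce exact cancellation of the off-diagonal terms, leaving only the $a=\pm c$ contributions of total size $O(\epsilon/N)$. For the variance, a Wick expansion shows that the covariance of $|g_a|^2|g_c|^2$ and $|g_{a'}|^2|g_{c'}|^2$ is nonzero only when the two index sets overlap, and the resulting diagonal sum is $\lesssim\epsilon/N$. Hence $\|X_2\|_{L^2_\omega}^2\lesssim\epsilon/N\leq\epsilon$.

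\emph{Main obstacle.} The crucial point lies in the analysis of cases (i) and (iii) of the paired part: a naive absolute-value bound on the mean of each would yield a logarithmic divergence in $N$, and it is precisely the vanishing of $\Lambda_N^\epsilon$ in case (ii) (through $\psi_\epsilon(0)=1$) together with the two antisymmetries in cases (i) and (iii) that convert these apparently singular contributions into the uniform-in-$N$ estimate $\|X\|_{L^2_\omega}=O(\sqrt\epsilon)$.
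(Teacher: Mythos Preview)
Your argument is correct, and the treatment of the generic part $X_1$ is essentially identical to the paper's. The handling of the paired part $X_2$, however, differs from the paper and is more laborious than necessary.

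The paper disposes of the entire paired set $\tilde{\mathcal C}_N^{\epsilon,c}$ in one stroke: it observes that the involution $(a,b,c,d)\mapsto(-a,-b,-c,-d)$ preserves $\tilde{\mathcal C}_N^{\epsilon,c}$, leaves $g_ag_bg_cg_d$ and $\Lambda_N^\epsilon$ invariant (by evenness of $\psi_\epsilon$), and flips $\mathrm{sign}(d)$, so the sum over $X_2$ vanishes \emph{pointwise}. Your route is the same involution in disguise, but you apply it only at the level of the expectation and then carry out a separate Wick-type variance estimate. Two remarks: first, your phrase ``the antisymmetries $c\mapsto -c$ and $a\mapsto -a$'' is imprecise, since neither of these maps alone produces cancellation (e.g.\ $\Lambda_N^\epsilon(a,-a,c,-c)$ involves $\psi_\epsilon^2((a+c)/N)$, which is not even under $c\mapsto -c$); only the simultaneous map $(a,c)\mapsto(-a,-c)$ works, and once you notice this you see it kills the full random sum, making the mean/variance split superfluous. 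Second, the three cases (i)--(iii) are not disjoint (e.g.\ $(a,a,-a,-a)$ lies in both (ii) and (iii)), so ``exactly one'' is inaccurate, though the overlap contributes only $O(\epsilon/N)$ and is harmless. Your observation that case~(ii) vanishes identically because $\psi_\epsilon(0)=1$ is a nice shortcut the paper does not use.
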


\begin{proof} Assume for simplicity that $|a|>N(1-\epsilon)$
(the case $|b|>N(1-\epsilon)$ is similar). Next we split
${\mathcal C}_N^\epsilon=\tilde {\mathcal C}_N^\epsilon\cup \tilde 
{\mathcal C}_N^{\epsilon,c}$
where 
$$\tilde {\mathcal C}_N^\epsilon=\{(a,b,c,d)\in {\mathcal C}_N^\epsilon| a\neq -b,
a\neq -c,a\neq-d, b\neq -c, b\neq -d, c\neq -d \}$$
and 
$\tilde {\mathcal C}_N^{\epsilon,c}= {\mathcal C}_N^\epsilon\setminus \tilde {\mathcal C}_N^\epsilon$.
Then by orthogonality and Proposition \ref{orth1} we can estimate
$$\|\sum_{(a,b,c,d)\in \tilde {\mathcal C}_N^\epsilon}
\Lambda_N^\epsilon (a,b,c,d)
\frac {(sign (d))}{|a||b|} g_ag_{b}g_cg_{d}\|_{L^2_\omega}^2$$
$$\lesssim (\sum_{N(1-\epsilon)<|a|<N} \frac 1{|a|^2} )\cdot 
(\sum_{0<|b|<N} \frac 1{|b|^2})\cdot ( \sum_{0<|c|<N} 1)
= O(\epsilon).$$
Concerning the sum on the set $\tilde {\mathcal C}_N^{\epsilon,c}$ notice that:
$$\tilde {\mathcal C}_N^{\epsilon,c}=\big (\{(a,-a,a,-a)||a|\neq 0\}\cup \{(a,a,-a,-a)||a|\neq 0\}
\cup
\{(a,-a, b, -b), |a|\neq |b|\}$$$$\cup \{(a, b, -a, -b), |a|\neq |b|\}\cup \{(a,b,-b,-a), |a|\neq |b|\}
\big )\bigcap {\mathcal C}_N^\epsilon.
$$
As a consequence we get  
$(a,b,c,d)\in \tilde {\mathcal C}_N^{\epsilon,c}
$
implies $(-a,-b,-c,-d)\in \tilde {\mathcal C}_N^{\epsilon,c}
$ and moreover $g_ag_bg_cg_d=g_{-a}g_{-b}g_{-c}g_{-d}$.
Since we have the identity $$sign(d)\Lambda_N^\epsilon(a,b,c,d)+sign(-d)\Lambda_N^\epsilon(-a,-b,-c,-d)=0,$$
it is easy to deduce that
$$\sum_{(a,b,c,d)\in \tilde {\mathcal C}_N^{\epsilon,c}}
\Lambda_N^\epsilon (a,b,c,d)
\frac {sign (d)}{|a||b|} g_ag_{b}g_cg_{d}=0.$$

\end{proof}

\begin{lem}\label{thirdf56fght}
We have
$ \sum_{(a,b,c,d)\in {\mathcal D}_N^\epsilon}
\Lambda_N^\epsilon (a,b,c,d)
\frac {sign (d)}{|a||b|} g_ag_{b}g_cg_{d}= 0$,
where
$${\mathcal D}_N^\epsilon=\{(a,b,c,d)\in {\mathcal A}_N(4)| 0<|a|, |b|\leq N(1-\epsilon),
|c|, |d|>N(1-\epsilon)\}
$$ for every $N\in \N$, $\epsilon>0$.
\end{lem}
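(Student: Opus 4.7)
The plan is to cancel the sum to zero \emph{pointwise in $\omega$} by exhibiting a fixed-point-free involution of $\mathcal{D}_N^\epsilon$ under which the summand picks up a minus sign.

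First I would establish a sign structure: on $\mathcal{D}_N^\epsilon$ the indices $c$ and $d$ necessarily have opposite signs. Indeed, the constraint $a+b+c+d=0$ gives $c+d=-(a+b)$, so $|c+d|\leq|a|+|b|\leq 2N(1-\epsilon)$; if $c,d$ had the same sign this would force $|c|+|d|\leq 2N(1-\epsilon)$, contradicting $|c|,|d|>N(1-\epsilon)$. Hence $\mathrm{sign}(c)+\mathrm{sign}(d)=0$ throughout $\mathcal{D}_N^\epsilon$, and in particular $c\neq d$.

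Next I would introduce the involution $\sigma:(a,b,c,d)\mapsto(b,a,d,c)$. It is immediate that $\sigma(\mathcal{D}_N^\epsilon)=\mathcal{D}_N^\epsilon$ and $\sigma\circ\sigma=\mathrm{id}$; moreover $\sigma$ has no fixed point, since a fixed point would require $c=d$, which is excluded by the previous step. Thus $\mathcal{D}_N^\epsilon$ decomposes as a disjoint union of $\sigma$-orbits of cardinality two.

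The cancellation on each orbit is then algebraic. Since $\psi_\epsilon$ is even (so $\psi_\epsilon^2(x/N)=\psi_\epsilon^2(|x|/N)$) and since $b+d=-(a+c)$, one reads off from \eqref{Deltamm} that $\Lambda_N^\epsilon(b,a,d,c)=\Lambda_N^\epsilon(a,b,c,d)$. Combined with the commutativity $g_bg_ag_dg_c=g_ag_bg_cg_d$ and with $|b||a|=|a||b|$, the contribution of a $\sigma$-orbit $\{(a,b,c,d),(b,a,d,c)\}$ to the sum equals
\[\frac{\Lambda_N^\epsilon(a,b,c,d)}{|a||b|}\,g_ag_bg_cg_d\,\bigl(\mathrm{sign}(d)+\mathrm{sign}(c)\bigr)=0.\]
Summing over orbits gives the claim. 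The only step demanding even a short argument is the sign dichotomy for $c,d$; once it is in place the cancellation mechanism is essentially automatic, so I do not anticipate a serious obstacle.
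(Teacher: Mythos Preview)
Your proof is correct and follows essentially the same approach as the paper: first establish that $c\cdot d<0$ on $\mathcal{D}_N^\epsilon$ via the size constraints, then cancel by a symmetry that swaps $c$ and $d$ (the paper groups the four permutations $(a,b,c,d),(a,b,d,c),(b,a,c,d),(b,a,d,c)$, whereas your single involution $(a,b,c,d)\mapsto(b,a,d,c)$ is a slightly more economical packaging of the same cancellation).
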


\begin{proof}
We notice that if $(a,b,c,d)\in {\mathcal D}_N^\epsilon$
then $c\cdot d<0$. In fact assume 
by the absurd that $c, d>0$ or $c,d<0$ then we have
$|c+d|>2N(1-\epsilon)$ and it implies $|a+b|>2N(1-\epsilon)$. This is in contradiction with 
$|a+b|\leq |a|+|b|\leq  2N(1-\epsilon)$.\\
The proof can be concluded arguing as in Lemma~\ref{thir} in the case $c\cdot d<0$.

\end{proof}

\begin{lem}\label{thirdf5678}
We have 
$$\sup_N \|\sum_{(a,b,c,d)\in {\mathcal E}_N^\epsilon}
\Lambda_N^\epsilon (a,b,c,d)
\frac {sign (d)}{|a||b|} g_ag_{b}g_cg_{d}\|_{L^2_\omega}=O(\sqrt \epsilon),$$
where
$${\mathcal E}_N^\epsilon=\{(a,b,c,d)\in {\mathcal A}_N(4)| 0<|a|, |b|, |c|\leq N(1-\epsilon),
|d|>N(1-\epsilon)\}$$$$
\bigcup \{(a,b,c,d)\in {\mathcal A}_N (4)| 0<|a|, |b|, |d|\leq N(1-\epsilon),
|c|>N(1-\epsilon)\},
$$
for every $N\in \N$ and $\epsilon>0$.

\end{lem}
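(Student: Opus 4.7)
The plan is to follow the pattern of Lemmas \ref{thir} and \ref{thirdf}: decompose $\mathcal{E}_N^\epsilon$ into a generic piece $\tilde{\mathcal{E}}_N^\epsilon$ (no $j_k=-j_l$ for $k\neq l$) and its complement $\tilde{\mathcal{E}}_N^{\epsilon,c}$, and treat each by a combination of algebraic cancellation and orthogonality counting.

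For $\tilde{\mathcal{E}}_N^{\epsilon,c}$, I would copy the sign-flip argument at the end of Lemma \ref{thirdf}: the involution $(a,b,c,d)\mapsto(-a,-b,-c,-d)$ preserves this set and the product $g_ag_bg_cg_d$ (the pairing structure makes it a real quantity such as $|g_a|^2|g_b|^2$), leaves $\Lambda_N^\epsilon(a,b,c,d)$ invariant (since $\psi_\epsilon$ is even and the cutoff factor depends only on $|a+c|$), and flips $\mathrm{sign}(d)$, so the contributions cancel pairwise and this piece contributes $0$.

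For $\tilde{\mathcal{E}}_N^\epsilon$ I would split further according to the sign of $c\cdot d$. In the opposite-sign subcase I would group the four tuples $(a,b,c,d)$, $(b,a,c,d)$, $(a,b,d,c)$, $(b,a,d,c)$; the swap $c\leftrightarrow d$ carries the first component of $\mathcal{E}_N^\epsilon$ into the second and conversely, so all four tuples lie in $\mathcal{E}_N^\epsilon$. Writing $\Lambda_N^\epsilon=\Psi\cdot[\psi_\epsilon^2(\cdot/N)-1]$ with $\Psi=\prod\psi_\epsilon(\cdot/N)$, and using $a+c=-(b+d)$, $b+c=-(a+d)$ together with the evenness of $\psi_\epsilon^2$, the combined coefficient of $g_ag_bg_cg_d/(|a||b|)$ factors as $\Psi\cdot X\cdot(\mathrm{sign}(c)+\mathrm{sign}(d))$, with
\[ X=\psi_\epsilon^2\bigl(\tfrac{a+c}{N}\bigr)+\psi_\epsilon^2\bigl(\tfrac{b+c}{N}\bigr)-2, \]
which vanishes whenever $c\cdot d<0$. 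The same identity also handles the degenerate orderings with $a=b$: there $d=-2a-c$ gives $a+d=-(a+c)$, so the two remaining tuples have coefficients proportional to $[\psi_\epsilon^2((a+c)/N)-1](\mathrm{sign}(c)+\mathrm{sign}(d))=0$.

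In the same-sign subcase, from $a+b+c+d=0$ we get $|a+b|=|c+d|=|c|+|d|>N(1-\epsilon)$ (because one of $|c|,|d|$ exceeds $N(1-\epsilon)$ and the signs agree), hence $\max(|a|,|b|)>N(1-\epsilon)/2$. Then Proposition \ref{orth1} reduces the squared $L^2_\omega$-norm to a deterministic sum, which, assuming WLOG $|a|>N(1-\epsilon)/2$, I would estimate as
\[ \Bigl(\sum_{N(1-\epsilon)/2<|a|\le N}\tfrac{1}{|a|^2}\Bigr)\Bigl(\sum_{0<|b|\le N}\tfrac{1}{|b|^2}\Bigr)\cdot\#\{\text{admissible }c\}\lesssim\tfrac1N\cdot 1\cdot N\epsilon=\epsilon, \]
the count of admissible $c$ being $O(N\epsilon)$ since in the first component $|a+b+c|\in(N(1-\epsilon),N]$ confines $c$ to an interval of length $O(N\epsilon)$, and in the second component $|c|\in(N(1-\epsilon),N]$ does so directly. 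Taking the square root yields the desired $O(\sqrt\epsilon)$. The main obstacle will be verifying that the four-term cancellation genuinely exhausts the opposite-sign contributions — i.e., that the three 4-swap partners of each generic tuple really stay inside $\mathcal{E}_N^\epsilon$ — and that the bookkeeping of degenerate orderings such as $a=b$ neither breaks the cancellation nor inflates the orthogonality count.
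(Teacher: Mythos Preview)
Your proposal is correct and follows essentially the same approach as the paper: the same four-term cancellation (borrowed from Lemma~\ref{thir}) disposes of the $c\cdot d<0$ contributions, and the same orthogonality count via Proposition~\ref{orth1} handles the generic $c\cdot d>0$ piece (the paper parametrizes by $(a,b,d)$ and writes $\big(\sum_{|a+b|>N(1-\epsilon)}1/(a^2b^2)\big)\big(\sum_{N(1-\epsilon)<|d|\le N}1\big)=O(\epsilon)$, which is equivalent to your count). The one simplification you overlook is that the degenerate set $\tilde{\mathcal{E}}_N^{\epsilon,c}$ is in fact \emph{empty}: any pairing $j_k=-j_l$ forces two of the moduli to coincide across the threshold $N(1-\epsilon)$, contradicting the definition of $\mathcal{E}_N^\epsilon$ --- so your sign-flip involution, while valid, is unnecessary.
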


\begin{proof} 
Arguing as in Lemma \ref{thir} in the case $c\cdot d<0$, we can restrict to the case 
$(a,b,c,d)\in {\mathcal E}_N^\epsilon$ with
$c\cdot d>0$.
Next we split the sum on two constraints (see Section~\ref{orth} for the definition
of $\tilde {\mathcal A}(4)$ and $\tilde {\mathcal A}^c(4)$):
$${\mathcal E}_N^\epsilon\cap \tilde {\mathcal A}(4)\cap \{(a,b,c,d)| c\cdot d>0\}$$
and
$${\mathcal E}_N^\epsilon\cap \tilde {\mathcal A}^c(4)\cap \{(a,b,c,d)| c\cdot d>0\}.$$
By combining an orthogonality argument with Proposition~\ref{orth1}
we can estimate the sum on the first constraint by  
$$\big (\sum_{|a+b|>N(1-\epsilon)} \frac1{a^2b^2}\big ) 
\cdot \big (\sum_{N(1-\epsilon)<|d|\leq N} 1\big)
=O(\epsilon),$$
where we used that $c \cdot d>0$ implies
$|a+b|=|c+d|>N(1-\epsilon)$. 
Concerning the sum on the second constraint we have
$${\mathcal E}_N^\epsilon\cap \tilde {\mathcal A}^c(4)\cap \{(a,b,c,d)| c\cdot d>0\}
$$
$$\subset {\mathcal E}_N^\epsilon \bigcap \big (\{(a, b, -a, -b) \} \cup \{(a,b,-b,-a)
\cup \{(a,-a,b,-b)\}
\big )=\emptyset.$$


\end{proof}

\noindent{\em Proof of Proposition \ref{spcaart}.} The proof of 
\eqref{equiv1y} (and hence \eqref{1y}) follows by combining
Lemma \ref{thir}, \ref{thirdf}, \ref{thirdf56fght}, \ref{thirdf5678}.\\
Next we focus on the proof of \eqref{2y}, that can be written
as follows:
\begin{equation}\label{chloe}\|\sum_{(a,b,c,d,e)\in {\mathcal A}_N(5)}
\Gamma^\epsilon_N(a,b,c,d,e)\frac{sign(e)}{|a||b||c||d|}
 g_ag_bg_cg_dg_e
\|_{L^2_\omega}=O(\epsilon) + O(\frac {\ln N}{\sqrt N}),
\end{equation}
where:
\begin{equation}
\label{Gamma}
\Gamma^\epsilon_N(a,b,c,d,e)
=\psi_\epsilon(\frac{a}N) \psi_\epsilon(\frac{b}N)\psi_\epsilon(\frac{c}N)
\psi_\epsilon(\frac{d}N)\psi_\epsilon(\frac{e}N) [1-\psi_\epsilon^2(\frac{d+e}N)],
\end{equation}
and 
\begin{equation}\label{Antilde}{\mathcal A}_N(5)=\{(a,b,c,d,e)\in {\Z}^5|0<|a|, |b|, |c|, |d|, |e|
\leq N, a+b+c+d+e=0\}.\end{equation}
Due to the cut-off properties of $\psi_\epsilon$ the sum
in \eqref{chloe} can be replaced by the sum on the set
\begin{align*}&{\mathcal A}_N(5)\cap \{(a,b,c,d,e)|
|d+e|>N(1-\epsilon)\}\\\nonumber&={\mathcal A}_N(5)\cap \{(a,b,c,d,e)|
|a+b+c|>N(1-\epsilon)\}.\end{align*}
Next we split 
\begin{equation}\label{splitprim}
{\mathcal A}_N(5)= \tilde {\mathcal A}_N(5)\cup \tilde {\mathcal A}_N^{c}(5),
\end{equation}
where: 
\begin{align*}\tilde {\mathcal A}_N(5)&=\{(a,b,c,d,e)\in {\mathcal A}_N(5)|\\\nonumber
&a\notin \{-b,-c, -d, -e\}, b\notin \{-c, -d, -e\}, c\notin \{-d, -e\}, d\neq -e
\}\end{align*}
and 
$\tilde {\mathcal A}_N^{c}(5)={\mathcal A}_N(5)\setminus \tilde {\mathcal A}_N(5)$.
By orthogonality and Proposition \ref{orth1} we get:
\begin{multline*}
\| \sum_{\substack{(a,b,c,d,e)\in \tilde {\mathcal A}_N(5)\\|a+b+c|>N(1-\epsilon)}}
\frac{\Gamma^\epsilon_N(a,b,c,d,e)sign(e)}{|a||b||c||d|}
 g_ag_bg_cg_dg_e
\|_{L^2_\omega}^2
\\
\lesssim
\sum_{\substack{0<|a|,|b|,|c|,|d|\leq N, \\|a+b+c|>N(1-\epsilon)}}
\frac{1}{|a|^2|b|^2|c|^2|d|^2}=O(\frac 1N).
\end{multline*}
Concerning the sum on the set $\tilde {\mathcal A}_N^{c}(5)\cap \{(a,b,c,d,e)|
|a+b+c|>N(1-\epsilon)\} $ we first consider the splitting:
$$\tilde {\mathcal A}_N^{c}(5)=\tilde {\mathcal A}^{c}_{N,a=-b}\cup 
\tilde {\mathcal A}^{c}_{N,a=-c}\cup \tilde {\mathcal A}^{c}_{N,a=-d}\cup \tilde {\mathcal A}^{c}_{N,a=-e}\cup \tilde {\mathcal A}^{c}_{N,b=-c}
$$$$
\cup \tilde {\mathcal A}^{c}_{N,b=-d} \cup \tilde {\mathcal A}^{c}_{N,b=-e}
\cup \tilde {\mathcal A}^{c}_{N,c=-d}\cup \tilde {\mathcal A}^{c}_{N,c=-e}
\cup \tilde {\mathcal A}^{c}_{N,d=-e},
$$
where 
\begin{equation}\label{Aort}
 \tilde {\mathcal A}^{c}_{N,a=-b}= \{(a,b,c,d,e)\in {\mathcal A}_N(5)|a=-b\}
 \end{equation}
and analogous definition for the other sets. First notice that
$$\tilde {\mathcal A}^{c}_{N,d=-e}\cap  \{(a,b,c,d,e)|
|d+e|>N(1-\epsilon)\}=\emptyset.$$ Moreover by orthogonality and Proposition 
\ref{orth2} we can estimate the sum on the set 
$\tilde {\mathcal A}^{c}_{N,a=-b}\cap \{(a,b,c,d,e)|
|a+b+c|>N(1-\epsilon)\}$ (notice that since $a=-b$ we get $|c|>N(1-\epsilon)$) by:
$$\sum_{0<|a|<N} \frac 1{|a|^2} \big( \sum_{\substack{|c|>N(1-\epsilon), \\0<|c|,|d|<N}} \frac{1}{c^2d^2}\big )^\frac 12 =O(\frac 1{\sqrt N}).$$
In a similar way we can treat the sum on 
$\tilde {\mathcal A}^{c}_{N,a=-c}$,
$\tilde {\mathcal A}^{c}_{N,b=-c}$.
Next we treat the sum on 
$\tilde {\mathcal A}^{c}_{N,a=-d}$ (which by symmetry is equivalent to 
$\tilde {\mathcal A}^{c}_{N,b=-d}$, $\tilde {\mathcal A}^{c}_{N,c=-d}$).
In this case we can control the sum by 
$$\sum_{0<|a|\leq N} \frac 1{|a|^2} \big( \sum_{\substack{|a+b+c|>(1-\epsilon)N\\0<|b|,|c|\leq N}} \frac{1}{b^2c^2}\big )^\frac 12
\leq \sum_{0<|a|<N/2(1-\epsilon)} \frac 1{|a|^2} \big( \sum_{\substack{|b+c|>(1-\epsilon)N/2\\0<|b|,|c|\leq N}} \frac{1}{b^2c^2}\big )^\frac 12
$$$$+ \sum_{N/2(1-\epsilon)<|a|<N} \frac 1{|a|^2} \big( \sum_{\substack{0<|b|,|c|\leq N
}} \frac{1}{b^2c^2}\big )^\frac 12=O(\frac 1{\sqrt N}).$$
Next we treat the sum on the set
${\mathcal A}^{c}_{N,a=-e}$ (in the same way we can treat the remaining
cases).
We reduce by orthogonality to the estimate:
\begin{align}\label{butcol}\sum_{0<|e|\leq N} \frac 1{|e|} \big (\sum_{\substack{|e+d|>N(1-\epsilon)\\
0<|d|\leq N}} \frac 1{b^2c^2d^2}\big )^{1/2}
\leq \sum_{0<|e|\leq N} \frac 1{|e|} \big (\sum_{\substack{|e+d|>N(1-\epsilon)\\0<|d|\leq N}} \frac 1{d^2}\big )^{1/2}\\\nonumber=
\sum_{N(1-\epsilon)< |e|\leq N} \frac 1{|e|} \big (\sum_{\substack{|e+d|>
N(1-\epsilon)\\0<|d|\leq N}} \frac 1{d^2}\big )^{1/2}
+ \sum_{0<|e|\leq N(1-\epsilon)} \frac 1{|e|} \big (\sum_{\substack{|e+d|>N(1-\epsilon)\\0<|d|\leq N}} \frac 1{d^2}\big )^{1/2}
\\\nonumber=O(\epsilon) + \sum_{0<|e|\leq N(1-\epsilon)} \frac 1{|e|} \big (\sum_{\substack{|e+d|>N(1-\epsilon)\\0<|d|\leq N}} \frac 1{d^2}\big )^{1/2}.\end{align}
Notice that 
if $|e|\leq N(1-\epsilon)$ and $|e+d|>N(1-\epsilon)$ then either 
$|d|\geq N(1-\epsilon)$ or $e\cdot d>0$. In the first case the sum on the r.h.s. of \eqref{butcol} can be estimated by $O(\frac{\ln N}{\sqrt N})$
and in the second case it can be controlled by:
$$\sum_{0<e\leq N(1-\epsilon)} \frac 1{e} \big (\sum_{\substack{N(1-\epsilon)-e<d\leq N}} \frac 1{d^2}\big )^{1/2}$$
We can suppose that the $e$ summations ranges up to $N(1-\epsilon)-2$ since the contribution of the remaining terms is $O(\frac{1}{ N})$.
Finally,  we can estimate 
$$\sum_{0<e\leq N(1-\epsilon)-2} \frac 1{e} \big (\sum_{\substack{N(1-\epsilon)-e<d\leq N}} \frac 1{d^2}\big )^{1/2}$$
by
\begin{eqnarray*}
& \lesssim & \sum_{0<e<  N(1-\epsilon)-2} \frac 1{e}
\big ( \frac 1{N(1-\epsilon)-e} - \frac 1N \big)^\frac 12
\\
& \lesssim &\sum_{0<e<  N(1-\epsilon)} \frac 1{e}
 \frac{\sqrt {\epsilon N +e}}{\sqrt{ N(N(1-\epsilon)-e)}}
\\
& \lesssim & \sum_{0<e< N(1-\epsilon)} \frac 1{e}
 \frac{1}{\sqrt{ (N(1-\epsilon)-e)}}.
\end{eqnarray*}
By using the variable $\epsilon N+e$ we can rewrite the last sum as follows:
$$...=\sum_{\epsilon N<a<N} \frac1{(a-\epsilon N)\sqrt{N-a}} $$$$
\lesssim \sum_{\epsilon N<a<N/2} \frac1{(a-\epsilon N)\sqrt{N-a}} 
+ \sum_{N/2\leq a<N} \frac1{(a-\epsilon N)\sqrt{N-a}} $$
$$\lesssim \frac 1{\sqrt N} \sum_{\epsilon N<a<N/2} 
\frac1{a-\epsilon N} +\frac 1{N} \sum_{N/2\leq a<N} \frac1{\sqrt{N-a}} = O(\frac {\ln N}{\sqrt N}).$$
It concludes the proof of \eqref{2y}.

\hfill$\Box$
\section{Almost Invariance of $F_{N,R}^\epsilon d\mu_1$}
The main result of this section is the following proposition,
where $F_{N,R}^\epsilon$ is defined in \eqref{old}.
\begin{prop}\label{deng}
Let $\sigma, R >0$ be fixed. Then for every $\delta>0$ there exists
$N=N(\delta)>0$ and $\epsilon=\epsilon(\delta)>0$ such that
$$|\int_A F_{N,R}^\epsilon d\mu_1- \int_{\Phi_{N}^\epsilon(t) A}
F_{N,R}^\epsilon d\mu_1|\leq \delta t$$
for every $A \in
{\mathcal B}(H^{1/2-\sigma})$ and for every $t$.
\end{prop}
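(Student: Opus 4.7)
The plan is to write the change $\int_{\Phi_N^\epsilon(t)A} F_{N,R}^\epsilon\,d\mu_1 - \int_A F_{N,R}^\epsilon\,d\mu_1$ as a time integral of a quantity controlled directly by Proposition~\ref{spcaart}, so the key task is to identify precisely the defect of the modified energy $E_N^\epsilon$ along the approximate flow. First I will show that of the three factors of $F_{N,R}^\epsilon$ in \eqref{old}, the two $\chi_R$-cutoff factors are \emph{exactly} conserved by $\Phi_N^\epsilon(t)$. Setting $v=S_N^\epsilon u$ and using \eqref{modify}, a direct integration by parts gives $\tfrac{d}{dt}\|\pi_N u\|_{L^2}^2=0$ because the nonlinearity contributes $\tfrac13\int\partial_x(v^3)\,dx=0$, and $\tfrac{d}{dt}[\|\pi_N u\|_{\dot H^{1/2}}^2+\tfrac13\int(S_N^\epsilon u)^3\,dx]=0$ because this quantity is, up to a factor, the Hamiltonian of \eqref{modify} (the constant $\alpha_N$ playing no role). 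So along $\Phi_N^\epsilon(t)$ only the exponential factor of $F_{N,R}^\epsilon$ evolves, and
\[F_{N,R}^\epsilon(\Phi_N^\epsilon(t)u)=F_{N,R}^\epsilon(u)\exp\Bigl(-\int_0^t\mathcal F(\Phi_N^\epsilon(r)u)\,dr\Bigr),\quad \mathcal F(u):=\tfrac{d}{dt}E_N^\epsilon(\Phi_N^\epsilon(t)u)\big|_{t=0}.\]

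The second and central step is to compute $\mathcal F$ explicitly. Since $E_1$ is a conservation law of the genuine Benjamin--Ono flow $v_t^{BO}=-\mathcal H\partial_x^2 v-vv_x$ while the approximate dynamics gives $v_t=-\mathcal H\partial_x^2 v-(S_N^\epsilon)^2(vv_x)$, the standard energy identity yields
\[\mathcal F(u)=\bigl\langle 2v_{xx}+\nabla E_1(v),\,(I-(S_N^\epsilon)^2)(vv_x)\bigr\rangle,\]
in which the $+2v_{xx}$ comes from the difference $\|u\|_{\dot H^1}^2-\|v\|_{\dot H^1}^2$ in \eqref{EepsilonN} and cancels the top-order term of $\nabla E_1(v)=-2v_{xx}+\tfrac32 v|D|v+\tfrac34|D|(v^2)+\tfrac12 v^3$. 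Of the three surviving terms, the middle one vanishes identically:
\[\langle |D|(v^2),(I-(S_N^\epsilon)^2)(vv_x)\rangle=\tfrac12\langle v^2,(I-(S_N^\epsilon)^2)\mathcal H\partial_x^2(v^2)\rangle=0,\]
because $(I-(S_N^\epsilon)^2)\mathcal H\partial_x^2$ is the composition of a self-adjoint and an anti-self-adjoint Fourier multiplier, hence anti-self-adjoint. What survives is
\[\mathcal F(u)=\tfrac32\langle v|D|v,(I-(S_N^\epsilon)^2)(vv_x)\rangle+\tfrac12\langle v^3,(I-(S_N^\epsilon)^2)(vv_x)\rangle,\]
that is, \emph{exactly} the combination controlled in \eqref{1y}--\eqref{2y}; Proposition~\ref{spcaart} therefore gives $\|\mathcal F\|_{L^2(d\mu_1)}=O(\sqrt\epsilon)+O((\ln N)/\sqrt N)$.

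The third step is a Liouville plus change of variables argument. On the Fourier coefficients $\hat u(n)$ with $0<|n|\leq N$, equation \eqref{modify} is a finite-dimensional Hamiltonian ODE whose vector field is divergence-free (the diagonal entries of the nonlinearity all involve the vanishing mean $\hat u(0)=0$), so Lebesgue measure on the low modes is preserved; on the modes $|n|>N$ the dynamics reduces to the free linear evolution $e^{-t\mathcal H\partial_x^2}$ and preserves $d\mu_1$ coordinatewise. Combining this with the exact conservations of Step~1 and the identity $\|u\|_{\dot H^1}^2-\|S_N^\epsilon u\|_{\dot H^1}^2+E_1(S_N^\epsilon u)=E_N^\epsilon(u)$ gives
\[\int_{\Phi_N^\epsilon(s)A}F_{N,R}^\epsilon\,d\mu_1=\int_A F_{N,R}^\epsilon(u)\exp\Bigl(-\int_0^s\mathcal F(\Phi_N^\epsilon(r)u)\,dr\Bigr)d\mu_1(u),\]
and differentiating in $s$ and undoing the change of variables yields
\[\frac{d}{ds}\int_{\Phi_N^\epsilon(s)A}F_{N,R}^\epsilon\,d\mu_1=-\int_{\Phi_N^\epsilon(s)A}\mathcal F(v)\,F_{N,R}^\epsilon(v)\,d\mu_1(v),\]
in which no exponential blow-up in $s$ appears---this is precisely the point stressed in the introduction about obtaining a bound \emph{linear} in $t$. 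By Cauchy--Schwarz, Lemma~\ref{unifb}, and Step~2, the right-hand side is bounded by $C(R)(\sqrt\epsilon+(\ln N)/\sqrt N)$, which is $\leq\delta$ once $\epsilon$ is small and $N$ is large. Integrating in $s\in[0,t]$ concludes.

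The main obstacle is the algebraic identification in Step~2: without the anti-self-adjointness cancellation that kills the middle $|D|(v^2)$ contribution, $\mathcal F$ would contain a quartic carrying an extra derivative for which no random-oscillation bound in this paper is available. All other ingredients---Liouville on the finite-dimensional part, invariance of $d\mu_1$ under the linear evolution on high modes, and the uniform $L^p(d\mu_1)$ bound on $F_{N,R}^\epsilon$---are routine adaptations of the framework developed in \cite{TVens,TVimrn,TVjmpa}.
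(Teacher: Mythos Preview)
Your argument is correct and matches the paper's: you reduce, via Liouville on low modes, Gaussian invariance on high modes, and Cauchy--Schwarz with Lemma~\ref{unifb}, to the bound $\|\mathcal F\|_{L^2(d\mu_1)}\to 0$ (the paper's Proposition~\ref{leY}, with the Liouville step outsourced to \cite[Prop.~5.4]{TVimrn}), and your computation of $\mathcal F$---including the anti-self-adjointness cancellation of the $|D|(v^2)$ term, which the paper writes as $\int w\,\mathcal H w=0$---lands on exactly the two expressions of Proposition~\ref{spcaart}. One correction: the displayed formula at the end of Step~1 is not true as written, since the exponent of $F_{N,R}^\epsilon$ is $\|S_N^\epsilon u\|_{\dot H^1}^2-E_1(S_N^\epsilon u)$ and its time derivative is $-\mathcal F+\tfrac{d}{dt}\|u\|_{\dot H^1}^2$, not $-\mathcal F$; the missing $\tfrac{d}{dt}\|u\|_{\dot H^1}^2$ is exactly the Jacobian of $d\mu_1$ under the flow, and your Step~3 identity---where you correctly combine the density with the Gaussian via $E_N^\epsilon(u)=\|u\|_{\dot H^1}^2-\|S_N^\epsilon u\|_{\dot H^1}^2+E_1(S_N^\epsilon u)$---is the right statement.
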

\begin{remark}\label{deriv}
Notice that the proposition follows provided that we show
$$\sup_{t, A} |\frac d{dt} \int_{\Phi_{N}^\epsilon(t) A} 
F_{N,R}^\epsilon d\mu_1|\rightarrow 0 \hbox{ as }
\epsilon\rightarrow 0, N\rightarrow \infty.$$
\end{remark}
Using that the modified energies associated with $E_0$ and $E_{1/2}$ are true conservation lows for the approximated flows, we obtain that
the proof of Proposition~\ref{deng} can be completed 
by combining Remark~\ref{deriv} with Proposition 5.4 in \cite{TVimrn}, once we proof the following statement.
\begin{prop}\label{leY}
We have the flowing estimate:
$$
\lim_{\epsilon \rightarrow 0} \Big(\limsup_{N\rightarrow \infty} \big\|\frac d{dt} E^\epsilon_N(\pi_N \Phi_N^\epsilon(t)\varphi)_{t=0}\big\|_{L^2 (d\mu_1(\varphi))}\Big)=0,
$$
where the energies $E^\epsilon_N(u)$ are defined by \eqref{EepsilonN}.
\end{prop}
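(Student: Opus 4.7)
The plan is to compute $\frac{d}{dt}E_N^\epsilon(\pi_N\Phi_N^\epsilon(t)\varphi)\big|_{t=0}$ explicitly, identify algebraic cancellations, and recognize the result as a combination of the multilinear quantities controlled in Proposition \ref{spcaart}. First I would set $u(t)=\Phi_N^\epsilon(t)\varphi$, $u_N=\pi_N u$, $v:=S_N^\epsilon u$; since $\psi_\epsilon$ is supported in $|x|\le 1$, $S_N^\epsilon\pi_N=S_N^\epsilon$, so the quantity to differentiate equals $\|u_N\|_{\dot H^1}^2-\|v\|_{\dot H^1}^2+E_1(v)$. Equation \eqref{modify} gives
\[
\partial_t u_N=-\mathcal H\partial_x^2 u_N-S_N^\epsilon(vv_x),\qquad \partial_t v=-\mathcal H\partial_x^2 v-(S_N^\epsilon)^2(vv_x).
\]
The key idea is to write $\partial_t v=X_{\mathrm{BO}}(v)+R$ with $X_{\mathrm{BO}}(v)=-\mathcal H\partial_x^2 v-vv_x$ and $R:=(I-(S_N^\epsilon)^2)(vv_x)$. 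Because $E_1$ is conserved by the genuine Benjamin--Ono flow, $\langle E_1'(v),X_{\mathrm{BO}}(v)\rangle_{L^2}=0$ for every admissible $v$, so
\[
\frac{d}{dt}E_1(v)=\langle E_1'(v),R\rangle_{L^2},\qquad E_1'(v)=-2\partial_x^2 v+\tfrac32 v\mathcal H v_x-\tfrac32\mathcal H(vv_x)+\tfrac12 v^3.
\]
A direct integration by parts together with $S_N^\epsilon u_N=v$ yields
\[
\frac{d}{dt}\bigl(\|u_N\|_{\dot H^1}^2-\|v\|_{\dot H^1}^2\bigr)=2\int(\partial_x^2 v)\,R\,dx.
\]

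Summing the three contributions, the $-2\partial_x^2 v$ piece of $E_1'(v)$ paired with $R$ exactly cancels the $2\int(\partial_x^2 v)R$ term from the quadratic part. The piece $-\tfrac32\int\mathcal H(vv_x)\,R\,dx$ vanishes identically: $\mathcal H$ is skew-adjoint on real $L^2$ and commutes with the real, even, self-adjoint Fourier multiplier $I-(S_N^\epsilon)^2$, so moving these operators across successively shows the integral equals its own negative. What remains is
\[
\frac{d}{dt}E_N^\epsilon(\pi_N u)\Big|_{t=0}=\tfrac32\int v\,\mathcal H v_x\,(I-(S_N^\epsilon)^2)(vv_x)\,dx+\tfrac12\int v^3\,(I-(S_N^\epsilon)^2)(vv_x)\,dx,
\]
evaluated with $v=S_N^\epsilon\varphi$.

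Finally, these two integrals are precisely the quantities controlled by \eqref{1y} and \eqref{2y} of Proposition \ref{spcaart}, with $L^2(d\mu_1)$ bounds $O(\sqrt\epsilon)$ and $O(\epsilon)+O((\ln N)/\sqrt N)$ respectively. Taking $\limsup_{N\to\infty}$ and then $\lim_{\epsilon\to 0}$ of their sum gives the conclusion. The main obstacle will be the algebraic reduction of the previous paragraph: one must carefully verify that the highest-derivative contribution from $E_1'(v)$ exactly cancels the analogous term coming from differentiating $\|u_N\|_{\dot H^1}^2-\|v\|_{\dot H^1}^2$, and that the $\mathcal H(vv_x)$ piece drops out by skew-adjointness, so that only the random cubic and quintic expressions whose smallness has already been established in Proposition \ref{spcaart} survive.
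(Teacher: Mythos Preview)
Your proposal is correct and follows essentially the same approach as the paper: both compute the time derivative, exploit the conservation of $E_1$ under the genuine Benjamin--Ono flow so that only the error term $R=(I-S^2)(vv_x)$ contributes, verify that the highest-order and $\mathcal H(vv_x)$ pieces cancel, and reduce to the two multilinear expressions handled by Proposition~\ref{spcaart}. Your packaging via $\langle E_1'(v),X_{\mathrm{BO}}(v)\rangle=0$ and the skew-adjointness argument for the $\mathcal H(vv_x)$ term is a slightly cleaner version of the paper's direct term-by-term expansion, but the substance is the same (note only a harmless sign in your expression for $E_1'(v)$: the $\mathcal H(vv_x)$ coefficient should be $+\tfrac32$, which does not affect anything since that term vanishes).
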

\begin{proof}
In the sequel we make computations with  
$N\in \N, \epsilon>0$ fixed. Hence for simplicity
we use the following notations:
$S_N^\epsilon=S$, $E_N^\epsilon=\tilde E$.
Notice that if we denote by $u(t)=\pi_N\Phi_N^\epsilon(t) \varphi$ 
the solution to \eqref{modify}, then
$S u(t)$ solves 
\begin{equation}\label{bOM}
Su_t+ {\mathcal H} S u_{xx} + (Su Su_x) +(S^2 - Id) (Su Su_x)=0.
\end{equation}
Next we recall that
\begin{equation}\label{idEfu}\tilde E(u)= E_{1}(Su) + (\|u\|_{\dot H^1}^2 - \|Su\|_{\dot H^1}^2),\end{equation}
where
$$
E_1(u)=\|u\|_{\dot H^1}^2+ 3/4 \int u^2 {\mathcal H} \partial_x u+ 1/8 \int  u^4.
$$
Arguing as in \cite{TVens} and recalling that $Su$ solves 
\eqref{bOM}, then we get:
\begin{align}\label{Ieq}
&\frac d{dt} E_1(Su)_{t=0} =
\\\nonumber  
2 \int S\varphi_x (Id - S^2 ) (S\varphi S\varphi_x)_x
&+3/2 \int (S\varphi) {\mathcal H} (S\varphi_x)  (Id - S^2 ) (S\varphi S\varphi_x)\\
\nonumber +3/4 \int (S\varphi )^2 {\mathcal H} ( (Id - S^2 ) (S\varphi S\varphi_x)_x)
&+ 1/2 \int  (S\varphi)^3(Id - S^2 ) (S\varphi S\varphi_x).
\end{align}
Moreover we have (use that $u(t)$ solves \eqref{modify}):
\begin{align}\label{IIeq}&\frac d{dt} (\|u\|_{\dot H^1}^2 - \|Su\|_{\dot H^1}^2)_{t=0}= \\
2\int (-{\mathcal H} \varphi_{xx} - S(S\varphi S\varphi_x))_x\varphi_x
&-2\int (-{\mathcal H} S\varphi_{xx} - S^2(S\varphi S\varphi_x))_x S\varphi_x
\nonumber \\&= 
2\int S\varphi S\varphi_x S\varphi_{xx} -2 \int S\varphi S\varphi_x S^3\varphi_{xx}
\nonumber \end{align}
(here we used integration by parts and $\int v {\mathcal H} v=0$).
By combining \eqref{idEfu}, 
\eqref{Ieq}, \eqref{IIeq} we get:
\begin{align}\nonumber
&\frac d{dt} \tilde E(\pi_N\Phi_N^\epsilon(t)\varphi)_{t=0}=
3/2 \int (S\varphi) {\mathcal H} (S\varphi_x)  (Id - S^2 ) (S\varphi S\varphi_x)\\
\nonumber &+3/4 \int (S\varphi )^2 {\mathcal H} ( (Id - S^2 ) (S\varphi S\varphi_x)_x)
+ 1/2 \int  (S\varphi)^3(Id - S^2 ) (S\varphi S\varphi_x),
\end{align}
and by integration by parts
\begin{align}\nonumber ...&= 3/2 \int S\varphi {\mathcal H} (S\varphi_x)  (S\varphi S\varphi_x)
-3/2 \int S\varphi {\mathcal H} (S\varphi_x)  S^2  (S\varphi S\varphi_x)
\\\nonumber &
-3/2 \int S\varphi   S\varphi_x {\mathcal H} (S\varphi S\varphi_x)
+3/2 \int S\varphi   S\varphi_x {\mathcal H} S^2 (S\varphi S\varphi_x)
\\\nonumber &+ 1/2 \int  (S\varphi)^3 (S\varphi S\varphi_x)
- 1/2 \int  (S\varphi)^3 S^2  (S\varphi S\varphi_x)
.\end{align}
By using the property $\int v {\mathcal H} v=0$ we deduce
\begin{align}\nonumber ...&=
3/2 \int S\varphi {\mathcal H} (S\varphi_x)  (Id - S^2)  (S\varphi S\varphi_x)
+ 1/2 \int  (S\varphi)^3 (Id - S^2)(S\varphi S\varphi_x)
.\end{align}
We conclude by Proposition~\ref{spcaart}.
\end{proof}
\section{Invariance of $d\rho_{1,R}$}
The proof of the invariance of $d\rho_{1,R}$ follows via standard arguments (see e.g. \cite{TVimrn}) by the following proposition.
\begin{prop}\label{de}
Let $\sigma>0$ small enough and  $\bar t>0$ be fixed. Then for every compact set $A
\subset H^{1/2-\sigma}$ we have: 
\begin{equation}\label{monim}\int_{A} d\rho_{1,R} \leq  \int_{\Phi(\bar t) A} d\rho_{1, R}.
\end{equation}
\end{prop}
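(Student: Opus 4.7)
The plan is to transfer the known convergence $F_{N,R}^{\epsilon_0}d\mu_1\to d\rho_{1,R}$ of Proposition~\ref{approx}, the almost invariance of $F_{N,R}^\epsilon d\mu_1$ under $\Phi_N^\epsilon$ of Proposition~\ref{deng}, and the deterministic approximation $\Phi_N^\epsilon\to\Phi$ of Proposition~\ref{deterministic}, and chain them by a triangle-inequality argument. Only one inequality emerges because the deterministic comparison produces an inclusion $\Phi_N^\epsilon(\bar t)A\subset (\Phi(\bar t)A)_\eta$, where $(\cdot)_\eta$ denotes the open $\eta$-neighborhood in $H^{1/2-\sigma}$, rather than an equality of sets.

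First I would reduce to compact, regular data. By inner regularity of $\rho_{1,R}$, assume $A$ is compact in $H^{1/2-\sigma}$; fix $\sigma'\in(0,\sigma)$ and, for $M$ large, set $B_M=\{\varphi:\|\varphi\|_{H^{1/2-\sigma'}}\le M\}$. Since $\mu_1(B_M^c)\to 0$ as $M\to\infty$, Lemma~\ref{unifb} and H\"older's inequality give $\rho_{1,R}(B_M^c)\to 0$ together with the analogous bound for $F_{N,R}^\epsilon\mu_1$, uniformly in $N$. Hence I may prove the inequality for $A\cap B_M$ in place of $A$ and absorb the error as $M\to\infty$ at the end.

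Next, given $\delta>0$, I would let $\epsilon=\epsilon(\delta)$ and $N_0=N_0(\delta)$ be the parameters furnished by Proposition~\ref{deng}. With $\epsilon$ fixed, let $M'$ bound $\|\Phi(t)\varphi\|_{H^{1/2-\sigma'}}$ uniformly on $A\cap B_M$ for $|t|\le\bar t$ (Molinet's well-posedness on $H^{1/2-\sigma'}$ combined with compactness of $A\cap B_M$), and let $T=T(\epsilon,\sigma,\sigma',M'+1)$ be the local time from Proposition~\ref{deterministic}. Partitioning $[0,\bar t]$ into $K=\lceil\bar t/T\rceil$ subintervals and iterating Proposition~\ref{deterministic}, with an inductive check that $\Phi_N^\epsilon(jT)\varphi$ stays in $B_{M'+1}(H^{1/2-\sigma'})$ for $N$ large, one obtains
\[
\sup_{\varphi\in A\cap B_M}\|\Phi_N^\epsilon(\bar t)\varphi-\Phi(\bar t)\varphi\|_{H^{1/2-\sigma}}\le K\,C(\epsilon,M')\,N^{-\theta}\to 0,\qquad N\to\infty.
\]
Hence for every $\eta>0$, choosing $N$ large enough, $\Phi_N^\epsilon(\bar t)(A\cap B_M)\subset (\Phi(\bar t)A)_\eta$. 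Combining this inclusion with the almost invariance of Proposition~\ref{deng} and then passing $N\to\infty$ with $\epsilon$ fixed via Proposition~\ref{approx} yields
\[
\rho_{1,R}(A\cap B_M)\le \rho_{1,R}\bigl((\Phi(\bar t)A)_\eta\bigr)+\delta\bar t.
\]
Since $\Phi(\bar t)A$ is compact and hence closed in $H^{1/2-\sigma}$, letting $\eta\to 0$, then $\delta\to 0$, and finally $M\to\infty$ delivers the desired inequality.

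The main obstacle will be the iteration of Proposition~\ref{deterministic} over $K\sim\bar t/T(\epsilon)$ pieces, whose number diverges as the $\epsilon$ dictated by Proposition~\ref{deng} shrinks: the scheme only closes because the almost-invariance error in Proposition~\ref{deng} is linear in $t$, so that it stays bounded by $\delta\bar t$ on the full interval $[0,\bar t]$ no matter how fine the partition is. This is exactly the point the introduction emphasizes about not sacrificing any time integration. A secondary technical point is the inductive verification that $\Phi_N^\epsilon(jT)\varphi$ does not escape $B_{M'+1}(H^{1/2-\sigma'})$; this is delicate because no a priori $H^{1/2-\sigma'}$ bound on the approximate flow is available at this regularity, and it has to be extracted from the $H^{1/2-\sigma}$-closeness to the true orbit together with a small sacrifice of regularity at each step of the iteration.
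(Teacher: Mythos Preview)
Your plan assembles the same three ingredients (Propositions~\ref{approx}, \ref{deng}, \ref{deterministic}) and correctly identifies that the argument only closes because the almost-invariance error in Proposition~\ref{deng} is linear in $t$. The difference from the paper is in \emph{what gets iterated}. You iterate Proposition~\ref{deterministic} $K$ times to push the approximation $\Phi_N^\epsilon\approx\Phi$ from a short interval to all of $[0,\bar t]$, and then run the measure comparison once. The paper instead proves a one-step measure inequality on a short interval $[0,t_1]$,
\[
\int_A d\rho_{1,R}\le \int_{\Phi(t_1)A}d\rho_{1,R}+Ct_1/k,
\]
and then iterates \emph{this} along the compact sets $\Phi(jt_1)A$, which stay in a fixed ball of $H^{1/2-\sigma}$ by Molinet's global bound \eqref{Lchoice} on the \emph{true} flow. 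No property of the approximate flow beyond the single short-time estimate of Proposition~\ref{deterministic} is ever needed.

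This is exactly the point where your approach becomes delicate and the paper's does not. Your iteration requires that $\Phi_N^\epsilon(jT)\varphi$ remain in a ball of $H^{1/2-\sigma'}$, and you propose to extract this from the $H^{1/2-\sigma}$-closeness by sacrificing regularity at each of the $K$ steps. But $K=\lceil\bar t/T\rceil$ with $T=T(\epsilon,\sigma,\sigma',M)$, so if $T$ degenerates as the regularity gap $\sigma-\sigma'$ shrinks, the scheme is circular: the number of steps depends on the gap per step, which depends on the number of steps. A clean fix exists---Proposition~\ref{bootstrap} actually gives $\|\widetilde{u^N}\|_{U'}\lesssim_{\epsilon,M}1$, hence a uniform $H^{1/2-\sigma'}$ bound on the approximate flow over $[-T,T]$---but that is stronger than what Proposition~\ref{deterministic} states. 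The paper's ordering (iterate the measure inequality, not the deterministic estimate) sidesteps the issue entirely and also makes your preliminary measure-theoretic reduction to $A\cap B_M(H^{1/2-\sigma'})$ unnecessary.
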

\begin{proof} We fix $M>0$ such that $A\subset B_M(H^{1/2-\sigma})
$ and we choose $L>0$ such that
\begin{equation}\label{Lchoice}\Phi(t)(B_M(H^{1/2-\sigma})\subset B_L(H^{1/2-\sigma})
\end{equation}
for every $t\in [0, \bar t]$ (the existence of $L$ follows by \cite{M}).

Next we fix $k>0$ and by Proposition \ref{deng} we get
$N_k\in \N$ and $\epsilon_k>0$ such that:
\begin{equation}\label{1}
|\int_{A} F_{N,R}^{\epsilon_k} d\mu_1 - \int_{\Phi_{N}^{\epsilon_k}(t) A} 
F_{N,R}^{\epsilon_k} d\mu_1|\leq t/k, \quad \quad \forall N>N_k, \quad \forall t.
\end{equation}
On the other hand we have by Proposition \ref{deterministic} the existence of
$t_1=t_1(L,k)>0$ and $C=C(L,k)>0$ such that
$$\sup_{\substack{u\in B_L(H^{1/2-\sigma})\\ t\in [0, t_1]}}
 \|\Phi_{N}^{\epsilon_k}(t)u - \Phi(t)u\|_{H^{1/2-\sigma'}}\leq C N^{-\theta}$$
and hence 
\begin{equation}\label{2sta}\int_{\Phi_{N}^{\epsilon_k}(t) A} d\rho_{1,R} 
 \leq \int_{\Phi(t) A +B_{CN^{-\theta}}(H^{1/2-\sigma'})} 
 d\rho_{1,R}, \hbox{ } \forall t\in [0, t_1].
\end{equation}
In turn by combining \eqref{1} with Proposition \ref{approx} we get the existence of $\tilde N_k\in \N$ such that
\begin{equation}\label{651}|\int_{A} d\rho_{1,R} - \int_{\Phi_{N}^{\epsilon_k}(t) A} 
d\rho_{1,R}|\leq {3t_1}/k \quad \quad \forall N>\tilde N_k, \quad \forall t.
\end{equation}
By combining \eqref{2sta} with \eqref{651} we get 
\begin{equation}\label{2sh}\int_{A} d\rho_{1,R} 
 \leq \int_{\Phi(t) A +B_{CN^{-\theta}}(H^{1/2-\sigma' })} 
 d\rho_{1,R} +3 {t_1}/k, \hbox{ } \forall t\in [0, t_1]
\end{equation}
that by taking the limit as $N\rightarrow \infty$ gives:
$$\int_{A} d\rho_{1,R}  \leq \int_{\Phi(t) A} 
 d\rho_{1,R} +3t_1/k, \hbox{ } \forall t\in [0, t_1].$$
It is sufficient to iterate the bound at most
$[\bar t/t_1]+1$ times and to take the limit as $k\rightarrow \infty$
in order to get \eqref{monim} (notice that we can iterate thanks to 
\eqref{Lchoice}).
\end{proof}
\section{The measures $H_{N, R}^\epsilon d\mu_{3/2}$ and the invariance of $d\rho_{{3/2},R}$}
The proof of the invariance of $d\rho_{3/2, R}$ is similar to the proof of the invariance of $d\rho_{1,R}$, once we establish the following analogue
of Proposition \ref{deng}.
\begin{prop}\label{deng32}
Let $\sigma, R >0$ be fixed. Then for every $\delta>0$ there exists
$N=N(\delta)>0$ and $\epsilon=\epsilon(\delta)>0$ such that
$$|\int_A H_{N,R}^\epsilon d\mu_{3/2}- \int_{\Phi_{N}^\epsilon(t) A}
H_{N,R}^\epsilon d\mu_{3/2}|\leq \delta t$$
for every $A \in
{\mathcal B}(H^{1-\sigma})$ and for every $t$.
\end{prop}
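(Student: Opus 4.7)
The strategy parallels the proof of Proposition~\ref{deng}. By the analogue of Remark~\ref{deriv}, it suffices to establish
\[\sup_{t,A}\Big|\frac{d}{dt}\int_{\Phi_N^\epsilon(t)A} H_{N,R}^\epsilon\, d\mu_{3/2}\Big|\longrightarrow 0\qquad\text{as }\epsilon\to 0,\ N\to\infty.\]
Of the four factors in \eqref{oldrho}, the first two $\chi_R$ factors (involving $\|\pi_N u\|_{L^2}$ and the modified $E_{1/2}$) are exactly conserved by $\Phi_N^\epsilon$: this follows by a direct computation using that $S_N^\epsilon$ commutes with the linear flow, together with the identity $\int S^2(v^2)\,\partial_x(v^2)\,dx=0$ obtained by integration by parts. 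The third $\chi_R$ factor involving $E_N^\epsilon-\alpha_N$ is handled by Proposition~\ref{leY}, whose proof transfers to $L^2(d\mu_{3/2})$ with improved bounds, since the Gaussian series \eqref{series} now carries the heavier weight $|n|^{-3/2}$. Finally, rewriting the exponential as $\exp(\|u\|_{\dot H^{3/2}}^2-G_N^\epsilon(u))$, pairing it against the Gaussian density of $d\mu_{3/2}$ and invoking the Liouville theorem on the $(2N+1)$-dimensional phase space in the spirit of \cite{TVimrn}, reduces matters to the single estimate
\[\lim_{\epsilon\to 0}\limsup_{N\to\infty}\Big\|\frac{d}{dt}G_N^\epsilon\bigl(\pi_N\Phi_N^\epsilon(t)\varphi\bigr)_{t=0}\Big\|_{L^2(d\mu_{3/2}(\varphi))}=0.\]

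To prove this last bound, set $Su=S_N^\epsilon u$ and decompose $G_N^\epsilon(u)=E_{3/2}(Su)+\bigl(\|u\|_{\dot H^{3/2}}^2-\|Su\|_{\dot H^{3/2}}^2\bigr)$. The manipulations of \eqref{IIeq} applied to the $\dot H^{3/2}$ differences (using that $Su$ satisfies \eqref{bOM}) reduce, after integration by parts, to an expression involving the factor $(Id-S^2)(Su\cdot Su_x)$. For $\frac{d}{dt}E_{3/2}(Su)$, one uses that $E_{3/2}$ is an exact conservation law of the true Benjamin--Ono flow~\cite{Mats}, so that the whole contribution comes from the defect $(S^2-Id)(Su\cdot Su_x)$ in \eqref{bOM} paired against the variational derivative of $E_{3/2}$. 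Collecting all terms, the time derivative of $G_N^\epsilon$ decomposes into a finite sum of integrals of the schematic form
\[\int P_j(Su)\cdot(Id-S^2)\,\partial_x^{k_j}\bigl(Su\cdot Su_x\bigr)\,dx,\]
where $P_j$ is a polynomial in $Su$, $Su_x$, $\mathcal H(Su_x)$ of degree between $1$ and $4$, arising respectively from the cubic, quartic, and quintic monomials in $E_{3/2}$.

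Each such integral, expanded via \eqref{series}, becomes a $k$-fold Gaussian sum
\[\sum_{(n_1,\dots,n_k)\in\mathcal A_N(k)}\Gamma_N^\epsilon(n_1,\dots,n_k)\,m(n_1,\dots,n_k)\prod_{j=1}^{k}\frac{g_{n_j}}{|n_j|^{3/2}},\qquad k\in\{4,\dots,7\},\]
with $\Gamma_N^\epsilon$ carrying the high-frequency indicator $[1-\psi_\epsilon^2(\cdot/N)]$ on the sum of the two frequencies entering the $Su\cdot Su_x$ factor. Following the blueprint of Section~6, I split $\mathcal A_N(k)$ into $\tilde{\mathcal A}(k)$ and $\tilde{\mathcal A}^c(k)$, estimate the non-resonant part via Proposition~\ref{orth1} together with the improved decay from $|n|^{-3/2}$, and handle each resonant component either by showing it is empty under the high-frequency constraint (as in Lemmas~\ref{thir} and~\ref{thirdf56fght}), by sign-symmetry cancellation exploiting $g_ag_bg_cg_d=g_{-a}g_{-b}g_{-c}g_{-d}$ together with the evenness of $\psi_\epsilon$ (as in Lemma~\ref{thirdf}), or by a direct orthogonal estimate using Proposition~\ref{orth2}.

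The main obstacle is the combinatorial bookkeeping for the highest-degree contribution, originating from the quintic term $-\tfrac{1}{20}\int u^5$ in $E_{3/2}$ and producing a six-fold Gaussian product with one derivative landing on the $(Id-S^2)(Su\cdot Su_x)$ factor. Every admissible pair-identification $n_i=-n_j$ must be shown either to violate the high-frequency constraint, to cancel in pairs via sign symmetry, or to yield an orthogonal sum summable at the desired rate $O(\epsilon^\alpha)+O(N^{-\beta})$. This step is the $d\mu_{3/2}$-analogue of Proposition~\ref{spcaart} and, once completed, closes the proof of Proposition~\ref{deng32}.
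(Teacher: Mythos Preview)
Your reduction to Proposition~\ref{leYu} (via the Liouville argument of \cite{TVimrn} and the $d\mu_{3/2}$-version of Proposition~\ref{leY} for the cutoff on $E_N^\epsilon-\alpha_N$) matches the paper. Two points in your treatment of $\frac{d}{dt}G_N^\epsilon$, however, need correction.

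First, you omit the exact cancellation of the \emph{cubic} terms. The quadratic part of $E_{3/2}$ paired with the defect, together with $\frac{d}{dt}\bigl(\|u\|_{\dot H^{3/2}}^2-\|Su\|_{\dot H^{3/2}}^2\bigr)$, produces four cubic integrals each carrying three derivatives; none is individually bounded in $L^2(d\mu_{3/2})$ (on the constraint $|a|>N(1-\epsilon)$ the effective weight is $|a|^{3/2}|b|^{-3/2}|c|^{-1/2}$, which diverges), so the orthogonality and sign-symmetry scheme you describe cannot handle them. An explicit integration-by-parts computation in the proof of Proposition~\ref{leYu} shows that their sum vanishes identically. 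This is why the surviving orders are $4,5,6$ only; there is no seventh-order term. Second, you have the difficulty inverted. The sixth-order contribution $\int(S\varphi)^4(Id-S^2)(S\varphi\,S\varphi_x)$ is the \emph{easiest}: Proposition~\ref{sixth} dispatches it by Minkowski in two lines, since five of the six weights $|n_j|^{-3/2}$ are already summable and the constraint forces one large frequency. The real obstacle is the \emph{quartic} terms of Proposition~\ref{spca2}, whose weights $|a|^{-3/2}|b|^{-1/2}|c|^{-3/2}|d|^{-1/2}$ are not jointly summable and which require the new sign-cancellation Lemmas~\ref{thirsal}--\ref{thirdf56fghtsal}; the key identity there exploits the swap $(a,b,c,d)\leftrightarrow(c,d,a,b)$ when $b$ and $d$ have opposite signs, rather than the reflection $(a,b,c,d)\mapsto(-a,-b,-c,-d)$ that you invoke.
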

Its proof follows by the following analogue of Proposition \ref{leY}.
\begin{prop}\label{leYu}
We have the following estimate:
$$\lim_{\epsilon\rightarrow 0} \Big(\limsup_{N\rightarrow \infty} 
\big\|\frac d{dt} G^\epsilon_N(\pi_N \Phi_N^\epsilon(t)\varphi)_{t=0}\big\|_{L^2 (d\mu_{3/2}(\varphi))}\Big)=0,
$$
where $G_N^\epsilon(u)$ are defined in \eqref{FepsilonN}.
\end{prop}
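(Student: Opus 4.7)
The strategy is to follow the template of Proposition~\ref{leY}, starting from the algebraic identity
$G_N^\epsilon(u) = E_{3/2}(Su) + \bigl(\|u\|_{\dot H^{3/2}}^2 - \|Su\|_{\dot H^{3/2}}^2\bigr)$,
where $S = S_N^\epsilon$. With $u(t) = \pi_N\Phi_N^\epsilon(t)\varphi$, the function $v = Su$ satisfies equation \eqref{bOM}, i.e. $v_t + \mathcal{H} v_{xx} + v v_x = (Id - S^2)(v v_x)$. Since $E_{3/2}$ is conserved along the true BO flow, this yields
\begin{equation*}
\frac{d}{dt}E_{3/2}(Su)\Big|_{t=0} = \int \frac{\delta E_{3/2}}{\delta u}(S\varphi) \cdot (Id - S^2)(S\varphi \cdot S\varphi_x)\,dx,
\end{equation*}
with $\frac{\delta E_{3/2}}{\delta u}(u) = 2|D|^3 u + Q_2(u) + Q_3(u) + Q_4(u)$, where $Q_k$ denotes the $k$-homogeneous part arising from the variations of the cubic, quartic and quintic nonlinear terms of $E_{3/2}$.

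In parallel, using that $u$ solves \eqref{modify} and that the purely linear contribution $\int \mathcal{H}\varphi_{xx}(Id - S^2)|D|^3\varphi$ vanishes by parity of its Fourier symbol (odd in $n$, paired against the even quantity $|\hat\varphi(n)|^2$), a direct integration by parts yields
\begin{equation*}
\frac{d}{dt}\bigl(\|u\|_{\dot H^{3/2}}^2 - \|Su\|_{\dot H^{3/2}}^2\bigr)\Big|_{t=0} = -2\int (S\varphi \cdot S\varphi_x)(S - S^3)|D|^3\varphi\,dx.
\end{equation*}
Using self-adjointness together with the identity $(Id - S^2)S = S - S^3$, this expression exactly cancels the contribution of the leading $2|D|^3 S\varphi$ piece of $\frac{\delta E_{3/2}}{\delta u}(S\varphi)$ in the previous display. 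Therefore $\frac{d}{dt} G_N^\epsilon(\pi_N\Phi_N^\epsilon(t)\varphi)\big|_{t=0} = \mathcal{T}_2 + \mathcal{T}_3 + \mathcal{T}_4$, where
$\mathcal{T}_k := \int Q_k(S\varphi) \cdot (Id - S^2)(S\varphi \cdot S\varphi_x)\,dx$
is a Gaussian polynomial of total order $k+2$, whose Fourier kernel is localized to the set where the factor $(Id - S^2)$ forces at least one compound frequency $\gtrsim N(1-\epsilon)$.

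Each $\mathcal{T}_k$ is of precisely the type handled in Proposition~\ref{spcaart}, except that the Gaussian weights are now $1/|n|^{3/2}$ (coming from $d\mu_{3/2}$) rather than $1/|n|$, and the chaos orders reach $6$. For each $k \in \{2,3,4\}$, expanding in Fourier with \eqref{series}, I would isolate the resonance constraint $|n_{i_1} + \cdots + n_{i_r}| \gtrsim N(1-\epsilon)$ coming from the cutoff $1 - \psi_\epsilon^2$, and then split the summation set as $\tilde{\mathcal{A}}_N(k+2) \cup \tilde{\mathcal{A}}_N^c(k+2)$, just as in Section~\ref{orth}. On the nondegenerate stratum, Proposition~\ref{orth1} collapses the $L^2_\omega$ norm to an explicit diagonal sum which, thanks to the improved $3/2$-weights absorbing the extra derivatives carried by $Q_k$ and $(S\varphi \cdot S\varphi_x)$, is directly estimated to be of size $O(\sqrt\epsilon) + O(N^{-\alpha})$ for some $\alpha > 0$. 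On the complementary strata (indexed by the possible pairings $n_i = -n_j$), the sign-cancellation and symmetry arguments of Lemmas~\ref{thir}--\ref{thirdf5678} eliminate the boundary contributions, while Proposition~\ref{orth2} handles the residual resonant sub-sums.

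The principal technical obstacle is the order-$6$ contribution $\mathcal{T}_4$, coming from the quintic term $-\tfrac{1}{20}\int u^5$ of $E_{3/2}$: it requires a more delicate case analysis of the pairings in $\tilde{\mathcal{A}}_N^c(6)$ than the order-$5$ bound \eqref{chloe} already established in Section~5. One must organize the resonance constraint $|d+e| \gtrsim N(1-\epsilon)$ (and its permutations) according to which single or pair of frequencies is large, and combine the antisymmetrization mechanism of Lemma~\ref{thir} with the pairing identities of Lemma~\ref{thirdf} to kill the worst contributions. Once this is carried out for $\mathcal{T}_2$, $\mathcal{T}_3$ and $\mathcal{T}_4$, each term tends to $0$ in $L^2(d\mu_{3/2})$ first as $N \to \infty$ and then as $\epsilon \to 0$, establishing the proposition.
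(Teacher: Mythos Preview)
Your overall strategy is correct and matches the paper's proof: decompose $G_N^\epsilon$ via the identity $G_N^\epsilon(u)=E_{3/2}(Su)+(\|u\|_{\dot H^{3/2}}^2-\|Su\|_{\dot H^{3/2}}^2)$, show the cubic contributions cancel exactly, and estimate the quartic, quintic and sixth-order remainders separately using Gaussian orthogonality under the frequency constraint forced by $(Id-S^2)$.

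However, your assessment of where the difficulty lies is inverted. The sixth-order term $\mathcal{T}_4$ is in fact the \emph{easiest}: with five factors carrying weight $|n|^{-3/2}$ and only one carrying $|n|^{-1/2}$, the Fourier coefficients are absolutely summable, and the paper dispatches this term in Proposition~\ref{sixth} by a one-line Minkowski bound giving $O(N^{-1/2})$ uniformly in $\epsilon$. No orthogonality, no pairing analysis, and no case decomposition of $\tilde{\mathcal{A}}_N^c(6)$ is needed.

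The genuine work is in the quartic terms $\mathcal{T}_2$, which carry two derivatives and produce weights such as $|c+d|\,|a|^{-3/2}|b|^{-1/2}|c|^{-3/2}|d|^{-1/2}$ that are \emph{not} absolutely summable. Here the paper first uses an integration-by-parts identity you do not mention,
\[
2\int S\varphi\, S\varphi_x\, (Id-S^2)(S\varphi S\varphi_x)_x \;=\; \int \partial_x\bigl((Id-S^2)^{1/2}(S\varphi S\varphi_x)\bigr)^2 \;=\; 0,
\]
to kill one of the four quartic terms outright; the three survivors are then handled in Proposition~\ref{spca2} via a new set of lemmas (Lemmas~\ref{thirsal}--\ref{thirdf56fghtsal}) tailored to the $3/2$-weights, which are not mere transcriptions of Lemmas~\ref{thir}--\ref{thirdf5678}: the sign-cancellation mechanism in Lemma~\ref{thirsal} pairs $(a,b,c,d)$ with $(c,d,a,b)$ rather than permuting $c\leftrightarrow d$, and Lemma~\ref{thirdf56fghtsal} relies on a nontrivial exclusion argument specific to this kernel. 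The quintic terms likewise require their own detailed treatment (Proposition~\ref{fifth order}), with a full case analysis over the ten pairing sets $\tilde{\mathcal{A}}^c_{N,\cdot=\cdot}$. Your plan is sound, but the effort should be budgeted toward orders four and five, not six.
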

In turn we split the proof of this proposition in several steps.
\begin{prop}\label{spca2} We have the following estimates:
\begin{align}\label{321sla}
\|\int S \varphi ({\mathcal H}S\varphi_x) {\mathcal H} (Id - S^2) (S\varphi S\varphi_x)_x\|_{L^2 (d\mu_{3/2}(\varphi))}
=O(\sqrt{\frac{\ln N}{N}})+O(\epsilon),
\\\label{322sla}
\|\int (S\varphi _x)^2 (Id - S^2) (S\varphi S\varphi_x)  \|_{L^2 (d\mu_{3/2}(\varphi))}
=O(\sqrt{\frac{\ln^3 N}{N}})+O(\sqrt \epsilon),
\\\label{323sla}
\|\int ({\mathcal H} S\varphi_x)^2(Id - S^2) (S\varphi S\varphi_x) \|_{L^2 (d\mu_{3/2}(\varphi))}
=O(\sqrt{\frac{\ln^3 N}{N}})+O(\sqrt \epsilon),
\end{align} 
where $S=S^\epsilon_N$.
\end{prop}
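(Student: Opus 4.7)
The plan is to mirror the combinatorial strategy used in the proof of Proposition \ref{spcaart}. Each of the three integrals is a quartic form in the Gaussian variables, so I would first expand in Fourier to write, schematically,
\[
I^{(j)} \;=\; \sum_{(a,b,c,d)\in\mathcal{A}_N(4)} \Xi^{(j)}_{\epsilon,N}(a,b,c,d)\,\frac{g_a g_b g_c g_d}{|a|^{3/2}|b|^{3/2}|c|^{3/2}|d|^{3/2}},
\]
where $\Xi^{(j)}_{\epsilon,N}$ encodes the $\psi_\epsilon(\cdot/N)$ cutoffs, the $|n|$ or $n$ factors coming from $\partial_x$ and $\mathcal{H}$, the signs, and most importantly the factor $[1-\psi_\epsilon^2((c+d)/N)]$ arising from the $(\mathrm{Id}-S^2)$ inserted between the two quadratic blocks. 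That last factor localizes the sum to the region $|c+d|\ge (1-\epsilon)N$, which is the source of all smallness.

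The next step is to split $\mathcal{A}_N(4)$ in the same four-region pattern employed in Lemmas~\ref{thir}--\ref{thirdf5678}. First, the interior region in which all variables satisfy $|\cdot|\le (1-\epsilon)N$ contributes nothing by the same sign-cancellation argument as in Lemma~\ref{thir}: the constraint $a+b+c+d=0$ combined with $|c+d|>(1-\epsilon)N$ forces at least one of $|a|,|b|$ to be large, and one pairs the $(a,b,c,d)$ term with $(a,b,d,c)$, $(b,a,c,d)$, $(b,a,d,c)$ (or the odd analogues when $a=b$) to see the contribution vanishes termwise. Second, on the subset $\tilde{\mathcal{A}}(4)$ with no pairing, Proposition~\ref{orth1} reduces the $L^2_\omega$ norm to the $\ell^2$ norm of the multiplier divided by $(|a||b||c||d|)^{3/2}$; one inserts the derivative powers from $\Xi^{(j)}$ and uses $|c+d|\sim N$ plus the summation restrictions to bound each such sum by $O(\sqrt{\epsilon})$ or by a power of $N^{-1/2}$ up to logarithms. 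Third, on the pairing set $\tilde{\mathcal{A}}^c(4)$, either the symmetry $(a,b,c,d)\leftrightarrow (-a,-b,-c,-d)$ combined with the odd sign produced by $\mathcal{H}\partial_x$ kills the contribution outright (as in Lemma~\ref{thirdf} treating $\tilde{\mathcal{C}}_N^{\epsilon,c}$), or one is reduced to a two- or three-dimensional sum that can be estimated directly.

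For the bookkeeping of derivative powers: in \eqref{321sla} the multiplier carries a factor $|b|\cdot|c+d|\cdot|d|$ (one $|b|$ from $\mathcal{H}\partial_x$ on the first factor, and $|c+d|\cdot|d|$ from $\mathcal{H}\partial_x$ acting on $(S\varphi\, S\varphi_x)$), while in \eqref{322sla} and \eqref{323sla} the multiplier carries $|b|\cdot|c|\cdot|d|$ (two derivatives split across the first block and one inside the second). The enhanced Gaussian weight $|n|^{-3/2}$ of $\mu_{3/2}$ (versus $|n|^{-1}$ for $\mu_1$) exactly absorbs this extra derivative, so that the resulting $\ell^2$ sums have the same dimensional count as in Proposition~\ref{spcaart} — namely one expects the analogue of the $O(\sqrt\epsilon)+O(\ln N/\sqrt N)$ bound of \eqref{2y}.

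The main obstacle, and source of the logarithmic factors, is exactly the boundary pairing case treated in \eqref{butcol}: after pairing, say $a=-e$, the remaining sum has the form $\sum_{0<|e|\le N}\tfrac{1}{|e|}(\sum_{|e+d|>N(1-\epsilon),\,|d|\le N}\tfrac{1}{d^2})^{1/2}$ and, through the change of variables $a = \epsilon N + e$ used at the end of the proof of Proposition~\ref{spcaart}, yields $O(\ln N/\sqrt N)$. In \eqref{322sla} and \eqref{323sla} the symmetric position of the two derivatives on the same block forces one to apply this type of boundary estimate twice in nested form (once when reducing $\tilde{\mathcal{A}}^c_{N,a=-e}$ and once when estimating the arising inner sum with a further derivative loss), producing the $\sqrt{\ln^3 N/N}$ weight. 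Once these boundary sums are handled, combining everything gives the stated $O(\sqrt\epsilon)+O(\sqrt{\ln N/N})$ and $O(\sqrt\epsilon)+O(\sqrt{\ln^3 N/N})$ bounds, concluding the proof.
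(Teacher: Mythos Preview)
The overall shape—Fourier expansion, region splitting, orthogonality—is right, but the specific cancellation you invoke does not go through, and this is a genuine gap.

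For \eqref{321sla}, after absorbing the $|n|^{-3/2}$ weights the multiplier is
\[
\frac{|c+d|\,\mathrm{sign}(d)}{|a|^{3/2}|b|^{1/2}|c|^{3/2}|d|^{1/2}}\,\Delta_N^\epsilon(a,b,c,d).
\]
The pairings you propose, $(a,b,c,d)\leftrightarrow(a,b,d,c)$ and $(a,b,c,d)\leftrightarrow(b,a,c,d)$, do not cancel this: swapping $c\leftrightarrow d$ sends the denominator $|c|^{3/2}|d|^{1/2}$ to $|d|^{3/2}|c|^{1/2}$, and swapping $a\leftrightarrow b$ does the analogous thing on the other pair; neither produces a sign reversal with the same weight. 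The correct symmetry (Lemma~\ref{thirsal}) pairs $(a,b,c,d)$ with $(c,d,a,b)$: this preserves the denominator exactly and $|a+b|=|c+d|$ preserves the cutoff, while $\mathrm{sign}(d)$ becomes $\mathrm{sign}(b)$. That cancels when $\mathrm{sign}(b)\neq\mathrm{sign}(d)$; when $\mathrm{sign}(b)=\mathrm{sign}(d)$ one shows separately that $|c+d|<N(1-\epsilon)$, so the cutoff kills the term. The remaining regions (Lemmas~\ref{thirdfsal}, \ref{thirdf56fghtsal}) are also organized differently from Lemmas~\ref{thirdf}--\ref{thirdf5678}, because the asymmetry of the exponents forces a split according to $\max\{|a|,|c|\}$ versus $\max\{|b|,|d|\}$.

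For \eqref{322sla} and \eqref{323sla}, your claim that the interior region vanishes by sign cancellation is simply false: the multiplier
\[
\frac{\mathrm{sign}(a)\,\mathrm{sign}(b)\,\mathrm{sign}(d)}{|a|^{1/2}|b|^{1/2}|c|^{3/2}|d|^{1/2}}
\]
has no useful parity under any of the permutations you list. In fact the paper uses \emph{no} cancellation here at all. It splits only into $\tilde{\mathcal A}_N(4)$ and $\tilde{\mathcal A}_N^c(4)$; on the former, Proposition~\ref{orth1} reduces matters to the $\ell^2$ sum $\sum_{|c+d|>N(1-\epsilon)}\frac{1}{|a||b||c|^3|d|}$, which is bounded directly by $O(\ln^3 N/N)$ (two logarithms from $\sum 1/(|a||b|)$ and one from the $(c,d)$ sum via Lemma~3.3 of \cite{TVimrn}); on the latter, Minkowski suffices. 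So the $\ln^3 N$ has nothing to do with ``applying the boundary estimate \eqref{butcol} twice in nested form''—\eqref{butcol} and the change of variables $a=\epsilon N+e$ belong to the \emph{quintic} sum in the proof of \eqref{2y} and play no role in this quartic proposition.
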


We split the proof of \eqref{321sla} in several lemmas.
Notice that we have
$$\|\int S \varphi ({\mathcal H}S\varphi_x) {\mathcal H} (Id - S^2) (S\varphi S\varphi_x)_x\|_{L^2(d\mu_{3/2})}=
$$
$$\|\sum_{{(a,b,c,d)\in \mathcal A}_N(4)} \Delta_N^\epsilon (a,b,c,d)
\frac {|c+d|sign(d)}{|a|^{3/2}|b|^{1/2}|c|^{3/2}|d|^{1/2}} g_ag_{b}g_cg_{d}\|_{L^2_\omega},
$$
where $g_e$ are the Gaussian independent variables in 
\eqref{series},
\begin{equation}\label{Delta} 
\Delta_N^\epsilon (a,b,c,d)=\psi_\epsilon(\frac{a}N) \psi_\epsilon(\frac{b}N)\psi_\epsilon(\frac{c}N)
\psi_\epsilon(\frac{d}N) [1-\psi_\epsilon^2(\frac{c+d}N)]
\end{equation}
and
\begin{equation}\label{giglMIT}
{\mathcal A}_N(4)=\{(a,b,c,d)\in {\Z}^4|0<|a|, |b|, |c|, |d|\leq N, a+b+c+d=0\}.
\end{equation}
\begin{lem}\label{thirsal}
We have
$\sum_{{(a,b,c,d)\in \mathcal B}_N^\epsilon} 
\frac {|c+d|sign(d)}{|a|^{3/2}|b|^{1/2}|c|^{3/2}|d|^{1/2}}\Delta_N^\epsilon (a,b,c,d)
 g_ag_{b}g_cg_{d}=0$
where: 
$${\mathcal B}_N^\epsilon=\{(a,b,c,d)\in {\mathcal A}_N(4)| 0<|a|, |b|, |c|, |d| \leq N(1-\epsilon)\}$$
for any $N\in \N$, $\epsilon>0$.
\end{lem}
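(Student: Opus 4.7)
The plan is to reduce to a restricted support and then pair up the surviving tuples under an involution whose contributions cancel. First I would note that on $\mathcal{B}_N^\epsilon$ the factors $\psi_\epsilon(a/N),\psi_\epsilon(b/N),\psi_\epsilon(c/N),\psi_\epsilon(d/N)$ all equal $1$, so $\Delta_N^\epsilon(a,b,c,d)=1-\psi_\epsilon^2((c+d)/N)$, which is nonzero only if $|c+d|>N(1-\epsilon)$. Using $|a+b|=|c+d|$ together with $|a|,|b|\leq N(1-\epsilon)$, this condition forces $a$ and $b$ to share a sign (otherwise $|a+b|\leq\max(|a|,|b|)\leq N(1-\epsilon)$); the symmetric argument gives $c\cdot d>0$, and the zero-sum identity $a+b+c+d=0$ then forces $\{a,b\}$ and $\{c,d\}$ to carry opposite signs, leaving only two cases: (i) $a,b>0$ and $c,d<0$, or (ii) $a,b<0$ and $c,d>0$.

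Next I would introduce the involution $\tau:(a,b,c,d)\mapsto(c,d,a,b)$ on $\mathcal{B}_N^\epsilon$. It exchanges cases (i) and (ii), preserves the random monomial $g_ag_bg_cg_d$, and preserves $\Delta_N^\epsilon$ by evenness of $\psi_\epsilon$ together with $|a+b|=|c+d|$. Any fixed point of $\tau$ would satisfy $a=c$, $b=d$, whence the constraint $a+b+c+d=0$ yields $a=-b$, which is incompatible with $a$ and $b$ having a common sign. Hence $\tau$ partitions the relevant support into orbits of size two.

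The cancellation is then a short computation. Writing
\[
K(a,b,c,d)=\frac{|c+d|\,\mathrm{sign}(d)}{|a|^{3/2}|b|^{1/2}|c|^{3/2}|d|^{1/2}}\,\Delta_N^\epsilon(a,b,c,d),
\]
and using $|a+b|=|c+d|$ with the evenness of $\psi_\epsilon$, one obtains
\[
K(a,b,c,d)+K(c,d,a,b)=\frac{|c+d|\,(\mathrm{sign}(b)+\mathrm{sign}(d))}{|a|^{3/2}|b|^{1/2}|c|^{3/2}|d|^{1/2}}\,\Delta_N^\epsilon(a,b,c,d).
\]
In both cases (i) and (ii) the signs of $b$ and $d$ are opposite, so the bracket vanishes; grouping the sum in $\tau$-orbits then gives $0$.

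The main conceptual obstacle is to find the right symmetry. Lemma \ref{thir} relied on the Klein four-group of within-pair swaps because its weight $1/(|a||b|)$ is symmetric inside each pair; that approach fails here since $|a|^{-3/2}|b|^{-1/2}|c|^{-3/2}|d|^{-1/2}$ is not symmetric within either pair. The remedy is to swap the two pairs as a whole: under $\tau$ the $\mathrm{sign}(d)$ factor becomes $\mathrm{sign}(b)$, and the sign restrictions forced by the $1-\psi_\epsilon^2((c+d)/N)$ cutoff ensure that these contributions cancel exactly.
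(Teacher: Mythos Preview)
Your proof is correct and uses essentially the same mechanism as the paper: the involution $(a,b,c,d)\mapsto(c,d,a,b)$ together with $\mathrm{sign}(b)+\mathrm{sign}(d)=0$ on the relevant support. The only difference is organizational: the paper first splits according to the sign of $b\cdot d$ (using the involution when $b\cdot d<0$, and showing $\Delta_N^\epsilon=0$ when $b\cdot d>0$), whereas you first restrict to $\{\Delta_N^\epsilon\neq 0\}$ and derive the sign constraints there before applying the involution.
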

\begin{proof}
Notice that in the case  $sign (b)\cdot sign(d)<0$ we have:
$$\frac {|c+d|sign(d)}{|a|^{3/2}|b|^{1/2}|c|^{3/2}|d|^{1/2}}\Delta_N^\epsilon (a,b,c,d)
+ \frac {|a+b|sign(b)}{|a|^{3/2}|b|^{1/2}|c|^{3/2}|d|^{1/2}}\Delta_N^\epsilon (c,d,a,b)=0.$$
Hence we can assume that $sign(b)= sign (d)$.
By the condition $a+b+c+d=0$ we get that 
at least one of the following occurs: either
$sign(b)\neq sign(a)$ or $sign(c)\neq sign(d)$.
In any case we have $|a+b|=|c+d|<N(1-\epsilon)$ and hence we conclude by the cut--off properties of $\psi_\epsilon$ that $\Delta_N^\epsilon (a,b,c,d)=0$.

\end{proof}

\begin{lem}\label{thirdfsal}
We have
$$\|\sum_{(a,b,c,d)\in {\mathcal C}_N^\epsilon}
\frac {|c+d|sign(d)}{|a|^{3/2}|b|^{1/2}|c|^{3/2}|d|^{1/2}}\Delta_N^\epsilon (a,b,c,d)
g_ag_bg_cg_d\|_{L^2_\omega}=O(\sqrt {\frac{\ln N}N})+O(\epsilon),$$
where: 
$${\mathcal C}_N^\epsilon=\{(a,b,c,d)\in {\mathcal A}_N(4)|
\max\{|a|, |c|\}>N(1-\epsilon)\}$$
for every $N\in \N, \epsilon>0$.
\end{lem}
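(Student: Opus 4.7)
\noindent\emph{Proof proposal.}
The plan is to mimic the proof of Lemma~\ref{thirdf}. Split $\mathcal{C}_N^\epsilon = \tilde{\mathcal{C}}_N^\epsilon \cup \tilde{\mathcal{C}}_N^{\epsilon,c}$, where $\tilde{\mathcal{C}}_N^\epsilon = \mathcal{C}_N^\epsilon \cap \tilde{\mathcal{A}}(4)$ is the subset on which no pairing $x=-y$ occurs among $a,b,c,d$, and $\tilde{\mathcal{C}}_N^{\epsilon,c}$ is the complement. A useful preliminary observation is that the cutoff $\Delta_N^\epsilon$ vanishes unless $|c+d|>N(1-\epsilon)$, which already removes from the sum the pairing classes $\{(a,-a,c,-c)\}$ and $\{(a,-a,b,-b):|a|\neq|b|\}$ (in both $c+d=0$).

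For the surviving paired classes $\{(a,a,-a,-a)\}$, $\{(a,b,-a,-b):|a|\neq|b|\}$ and $\{(a,b,-b,-a):|a|\neq|b|\}$, I would apply the same negation trick as in Lemma~\ref{thirdf}. Each class is invariant under $(a,b,c,d)\mapsto(-a,-b,-c,-d)$; the Gaussian monomial $g_ag_bg_cg_d$ is a product of $|g_k|^2$ factors and is therefore real and unchanged by this map; whereas the coefficient $\frac{|c+d|\,\mathrm{sign}(d)}{|a|^{3/2}|b|^{1/2}|c|^{3/2}|d|^{1/2}}\Delta_N^\epsilon(a,b,c,d)$ is even in $(a,b,c)$ and odd in $d$ (and $\Delta_N^\epsilon$ itself is even in every argument), hence flips sign. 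Pairing the two conjugate tuples within each class produces exact cancellation, so the contribution of $\tilde{\mathcal{C}}_N^{\epsilon,c}$ vanishes.

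For the unpaired part $\tilde{\mathcal{C}}_N^\epsilon$, orthogonality (Proposition~\ref{orth1}) reduces the $L^2_\omega$ norm squared to the $\ell^2$ sum of the coefficients,
\[\sum_{\tilde{\mathcal{C}}_N^\epsilon}\frac{|c+d|^2}{|a|^3|b||c|^3|d|},\]
with the support restrictions $|c+d|>N(1-\epsilon)$ and $\max(|a|,|c|)>N(1-\epsilon)$. By the $a\leftrightarrow c$ symmetry the two max-cases are treated identically; consider $|a|>N(1-\epsilon)$. After the change of variables $s=c+d$ (so $d=s-c$ and $b=-a-s$), I would iterate the summation in the order: first in $c$, where $\sum_{c}\frac{1}{|c|^3|s-c|}\lesssim\frac{1}{|s|}\sim\frac{1}{N}$ for $|s|\sim N$ (by splitting $|c|\le|s|/2$ versus $|c|>|s|/2$); then in $s$, where $\sum_{|s|\sim N}\frac{|s|}{|a+s|}\lesssim N\ln N$ (the logarithm arising from the regime $s\approx -a$); finally in $a$, where $\sum_{|a|>N(1-\epsilon)}|a|^{-3}\lesssim \epsilon/N^2$. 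Combining yields $O(\epsilon\ln N/N)$ for the $\ell^2$ sum, hence $O(\sqrt{\epsilon\ln N/N})$ for the $L^2_\omega$ norm, which is controlled by $O(\sqrt{\ln N/N})+O(\epsilon)$ via AM--GM.

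The main obstacle is organizing that last combinatorial estimate: the heterogeneous exponents $3/2,1/2$ in the denominator, the extra factor $|c+d|$ in the numerator, and the constraint $a+b+c+d=0$ (which ties $b$ to the other variables through $|a+s|$) mean that naively decomposing in the wrong order costs logarithms. The critical step is to feed the support restriction $|c+d|>N(1-\epsilon)$ into the innermost $c$-sum to obtain $1/|s|$ decay, and to reserve the $a$-sum for last so that its restriction $|a|>N(1-\epsilon)$ provides the $\epsilon$ gain; only with this ordering does one land on $\epsilon\ln N/N$ rather than on something qualitatively weaker.
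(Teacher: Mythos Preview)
Your proposal is correct and arrives at the stated bound, but the route differs from the paper's in a couple of instructive ways.

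For the paired part $\tilde{\mathcal{C}}_N^{\epsilon,c}$, you invoke the full negation symmetry $(a,b,c,d)\mapsto(-a,-b,-c,-d)$ and obtain exact cancellation for all three surviving classes. This is cleaner than the paper, which only exploits this cancellation on the class $\{(a,b,-a,-b)\}$ and then bounds the remaining classes $\{(a,a,-a,-a)\}$ and $\{(a,b,-b,-a)\}$ by Minkowski, arriving at a non-trivial contribution $O(\ln N/N)+O(\epsilon)$. Your observation that the coefficient is odd under global negation while the paired Gaussian monomial $|g_a|^2|g_b|^2$ is real and invariant is exactly the mechanism used in Lemma~\ref{thirdf}; the paper simply did not reuse it in full here.

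For the unpaired part $\tilde{\mathcal{C}}_N^\epsilon$, the paper writes $|c+d|^2\lesssim|c|^2+|d|^2$, substitutes $d=-a-b-c$, and splits into four elementary sums over $(a,b,c)$; you instead change variables to $s=c+d$ and iterate the summation in the order $c\to s\to a$. Both give $O(\ln N/N)$ for the $\ell^2$ sum (yours in fact yields the sharper $O(\epsilon\ln N/N)$, which you then relax).

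One small imprecision: the ``$a\leftrightarrow c$ symmetry'' you invoke to reduce to the case $|a|>N(1-\epsilon)$ is not a literal symmetry of the summand, since the numerator $|c+d|$ and the cutoff $\Delta_N^\epsilon$ single out the pair $(c,d)$. The relevant symmetry is $(a,b)\leftrightarrow(c,d)$ (using $|a+b|=|c+d|$), under which the two cases are swapped; alternatively, your iterated-sum argument runs verbatim for $|c|>N(1-\epsilon)$ by summing first in $a$ rather than in $c$. This does not affect correctness.
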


\begin{proof}
We will only treat the case $|a|>N(1-\epsilon)$ (for $|c|>N(1-\epsilon)$
the same argument works). Notice that by the cut--off property of $\psi_\epsilon$ 
we can work on the set ${\mathcal C}_N^\epsilon\cap \{|a|>N(1-\epsilon)\}\cap \{|a+b|=|c+d|>N(1-\epsilon)\}$.\\
We argue as in Lemma
\ref{thirdf} and we split ${\mathcal C}_N^\epsilon=\tilde {\mathcal C}_N^\epsilon\cup \tilde 
{\mathcal C}_N^{\epsilon,c}$.
When we sum on the set $\tilde {\mathcal C}_N^\epsilon$ then we can combine 
an orthogonality argument with Proposition \ref{orth1} and 
with the identity $d=-a-b-c$, and we are reduced to:
\begin{align*}&\sum_{\substack{|a|>N(1-\epsilon)\\ 0<|b|, |c|\leq N}} \frac{1}{|a|^3|b||c|} +
\sum_{\substack{|a|>N(1-\epsilon)\\ 0<|b|, |c|\leq N}} \frac{|a+b+c|}{|a|^3|b||c|^3}
\leq  \sum_{\substack{|a|>N(1-\epsilon)\\ 0<|b|, |c|\leq N}} \frac{1}{|a|^3|b||c|} \\\nonumber
&+
\sum_{\substack{|a|>N(1-\epsilon)\\ 0<|b|, |c|\leq N}} \frac{1}{|a|^2|b||c|^3}
+ \sum_{\substack{|a|>N(1-\epsilon)\\ 0<|b|, |c|\leq N}} \frac{1}{|a|^3|c|^3}
+\sum_{\substack{|a|>N(1-\epsilon)\\ 0<|b|, |c|\leq N}} \frac{1}{|a|^3|b||c|^2}
\\\nonumber&=O(\frac{\ln N}{N}).\end{align*}
Concerning the sum on ${\mathcal C}_N^{\epsilon,c}$
we work on the set:
$$\{(a,a,-a,-a)||a|\neq 0\}
\cup \{(a, b, -a, -b), |a|\neq |b|\}\cup \{(a,b,-b,-a), |a|\neq |b|\}
\big ).$$
Notice that
$$\frac {|a+b|sign(-b)}{|a|^{3/2}|b|^{1/2}|a|^{3/2}|b|^{1/2}}\Delta_N^\epsilon (a,b,-a,-b)
+ \frac {|a+b|sign(b)}{|a|^{3/2}|b|^{1/2}|c|^{3/2}|d|^{1/2}}\Delta_N^\epsilon (-a,-b,a,b)=0$$
and hence we have to consider the sum on the set:
$$\{(a,a,-a,-a)||a|\neq 0\}
\cup \{(a,b,-b,-a), |a|\neq |b|\}.
$$
We can conclude by combining the Minkowski inequality
with the following estimates:
$$\sum_{2|a|>N(1-\epsilon)} \frac{1}{|a|^3} + \sum_{\substack{0<|a|,|b|\leq N\\
|a|>N(1-\epsilon)\\|a+b|>N(1-\epsilon)}}
\frac{|a+b|}{|a|^2|b|^2}$$
$$\leq O(\frac 1{N^2}) + \sum_{\substack{
0<|b|\leq N\\
N(1-\epsilon)<|a|\leq N}}
\frac{1}{|a||b|^2}+\sum_{\substack{
0<|b|\leq N\\
N(1-\epsilon)<|a|\leq N}}
\frac{1}{|a|^2|b|} =O(\frac {\ln N}{N}) + O(\epsilon).$$

\end{proof}

\begin{lem}\label{thirdf56fghtsal}
We have
$$ \sum_{(a,b,c,d)\in {\mathcal D}_N^\epsilon} \frac {|c+d|sign(d)}{|a|^{3/2}|b|^{1/2}|c|^{3/2}|d|^{1/2}}
\Delta_N^\epsilon (a,b,c,d)
 g_ag_{b}g_cg_{d}=0,$$
where:
$${\mathcal D}_N^\epsilon=\{(a,b,c,d)\in {\mathcal A}_N(4)| 0<|a|, |c|\leq N(1-\epsilon),
\max\{|b|, |d|\}>N(1-\epsilon)\}
$$ for every $N\in \N$, $\epsilon>0$.
\end{lem}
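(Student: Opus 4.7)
The plan is to prove Lemma~\ref{thirdf56fghtsal} by exhibiting a fixed-point-free involution on ${\mathcal D}_N^\epsilon\cap\{\Delta_N^\epsilon\neq 0\}$ that pairs each term with one of equal magnitude and opposite sign, forcing the sum to vanish identically. In the spirit of Lemma~\ref{thirsal}, the right involution is the double swap
\[
\tau\colon (a,b,c,d)\longmapsto (c,d,a,b).
\]
The zero-sum condition $a+b+c+d=0$ is preserved, and ${\mathcal D}_N^\epsilon$ is invariant under $\tau$ since its defining conditions involve $\{|a|,|c|\}$ and $\{|b|,|d|\}$ only as unordered pairs. Using $|a+b|=|c+d|$ and the evenness of $\psi_\epsilon$ one has $\Delta_N^\epsilon(c,d,a,b)=\Delta_N^\epsilon(a,b,c,d)$; combined with $g_ag_bg_cg_d=g_cg_dg_ag_b$, the contribution of the pair $\{(a,b,c,d),\tau(a,b,c,d)\}$ to the sum is
\[
\frac{|c+d|\,(\mathrm{sign}(b)+\mathrm{sign}(d))\,\Delta_N^\epsilon(a,b,c,d)}{|a|^{3/2}|b|^{1/2}|c|^{3/2}|d|^{1/2}}\,g_ag_bg_cg_d,
\]
so the cancellation will follow once we establish that $\mathrm{sign}(b)\cdot\mathrm{sign}(d)<0$ on the support of the sum.

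The heart of the argument is therefore the claim that every $(a,b,c,d)\in{\mathcal D}_N^\epsilon$ with $\Delta_N^\epsilon(a,b,c,d)\neq 0$ satisfies $\mathrm{sign}(b)=-\mathrm{sign}(d)$. I would argue by contradiction: suppose $\mathrm{sign}(b)=\mathrm{sign}(d)$ and, by the obvious sign symmetry, take $b,d>0$. From $\max\{|b|,|d|\}>N(1-\epsilon)$ we get $b+d>N(1-\epsilon)$, and $a+c=-(b+d)$ forces $|a+c|=b+d>N(1-\epsilon)$. If $c>0$, then $a=-c-(b+d)$ is negative and $|a|=c+b+d>N(1-\epsilon)$, contradicting $|a|\leq N(1-\epsilon)$. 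If $c<0$, write $c=-c_1$ with $0<c_1\leq N(1-\epsilon)$; the nontriviality of $\Delta_N^\epsilon$ imposes $|c+d|=|d-c_1|>N(1-\epsilon)$, so either $d>N(1-\epsilon)+c_1$, in which case $|a|=b+d-c_1>N(1-\epsilon)$ is again a contradiction, or $d<c_1-N(1-\epsilon)\leq 0$, contradicting $d>0$.

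Once the sign claim is established, fixed points of $\tau$ are automatically excluded on the support of the sum (a fixed point would satisfy $a=c$, $b=d$, and in particular $\mathrm{sign}(b)=\mathrm{sign}(d)$), and the pairing argument collapses the whole sum to zero. The only non-routine step is the sign claim, where one must simultaneously use the zero-sum relation, the bounds $|a|,|c|\leq N(1-\epsilon)$, the threshold $\max\{|b|,|d|\}>N(1-\epsilon)$, and the cut-off condition $|c+d|>N(1-\epsilon)$; the other ingredients amount to a direct algebraic cancellation entirely parallel to the arguments in Lemmas~\ref{thir}, \ref{thirsal}, and \ref{thirdf56fght}.
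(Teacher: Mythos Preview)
Your proof is correct and follows essentially the same route as the paper: the involution $(a,b,c,d)\mapsto(c,d,a,b)$ produces the cancellation $\mathrm{sign}(b)+\mathrm{sign}(d)$, and one shows that ${\mathcal D}_N^\epsilon\cap\{\Delta_N^\epsilon\neq0\}\cap\{\mathrm{sign}(b)=\mathrm{sign}(d)\}=\emptyset$. The paper argues the emptiness claim by first observing $\min\{b,d\}\leq N(1-\epsilon)$, then forcing $a>0$ from $|a+b|>N(1-\epsilon)$, hence $c<0$ with $|c|=a+b+d>N(1-\epsilon)$; your case split on $\mathrm{sign}(c)$ reaches the same contradiction with a slightly different bookkeeping, and is equally valid.
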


\begin{proof}
Arguing as in Lemma \ref{thirsal} we can assume that $sign(b)=sign(d)$, and also
by the cut-off property of $\psi_{\epsilon}$ we can restrict to the set
$|a+b|=|c+d|>N(1-\epsilon)$.
We claim that ${\mathcal D}_N^\epsilon\cap \{sign(b)=sign(d)\}
\cap \{|a+b|>N(1-\epsilon)\}=\emptyset$,
and it will conclude the proof.
We can assume $b, d>0$ (the case $b,d<0$ can be treated in a similar way).
This implies that $0<\min\{b, d\}\leq N(1-\epsilon)$. In fact in case it is not true then we get
$b+d>2N(1-\epsilon)$ which implies
$|a+c|=b+d>2N(1-\epsilon)$, and it is in contradiction with $0<|a|, |c|\leq N(1-\epsilon)$.
We assume for simplicity $0<b\leq N(1-\epsilon)$.
Moreover by the condition
$|a+b|>N(1-\epsilon)$ we get (since $|a|<N(1-\epsilon)$
and $0<b\leq N(1-\epsilon)$) $a>0$.
By combining $a+b+c+d=0$ and $a, b, d>0$, we get $c<0$.
Hence we have
$|c|=a+b+d$ and it is absurd since $0<|c|\leq N(1-\epsilon)$ and
$a+b+d>d\geq N(1-\epsilon)$.


\end{proof}

\noindent{\em Proof of Proposition \ref{spca2}.}
The proof of \eqref{321sla} follows by combining Lemma \ref{thirsal},
\ref{thirdfsal} and \ref{thirdf56fghtsal}.
Concerning the proof of \eqref{322sla}
first notice that
\begin{align}\label{sumps}\|\int (S\varphi_x)^2 (Id - S^2)(S\varphi S\varphi_x)\|_{L^2(d\mu_{3/2})}^2
\\\nonumber=\|\sum_{{(a,b,c,d)\in \mathcal A}_N(4)} \Delta_N^\epsilon (a,b,c,d)
\frac {sign(a)sign(b)sign(d)}{|a|^{1/2}|b|^{1/2}|c|^{3/2}|d|^{1/2}} g_ag_{b}g_cg_{d}\|^2_{L^2_\omega},
\end{align}
where $\Delta_N^\epsilon (a,b,c,d)
$ and ${\mathcal A}_N(4)$ are defined in \eqref{Delta} and \eqref{giglMIT}.
Next (following Section~\ref{orth}) we split ${\mathcal A}_N(4)=\tilde {\mathcal A}_N(4)\cup \tilde {\mathcal A}_N^{c}(4)$,
where:
$$\tilde {\mathcal A}_N(4)=
\{(a,b,c,d)\in {\mathcal A}_N(4)| a\neq -b,
a\neq -c,a\neq-d, b\neq -c, b\neq -d, c\neq -d \}$$
and
\begin{multline*}
\tilde {\mathcal A}_N^{c}(4)
=\{(a,a,-a,-a)||a|\neq 0\}
\\
\cup \{(a, b, -a, -b), |a|\neq |b|\}
\\
\cup \{(a,b,-b,-a), |a|\neq |b|\}
\big),
\end{multline*}
(notice that we have excluded the vectors $(a,-a,b,-b)$ that in 
principle belong to ${\mathcal A}_N(4)$, but due to the cut-off property of $\psi_{\epsilon}$
they give a trivial contribution in the sum in \eqref{sumps}).
When we consider the sum on 
$\tilde {\mathcal A}_N(4)$, and we recall that $\Delta_N^\epsilon (a,b,c,d)=0$ 
when $|c+d|\leq N(1-\epsilon)$, then by orthogonality and Proposition~\ref{orth1}
we are reduced to:
$$\sum_{\substack{0<|a|, |b|, |c|, |d|\leq N\\|a+b|=|c+d|>N(1-\epsilon)}} \frac{1}{|a||b||c|^3|d|}
\leq O(\ln^2 N) \sum_{\substack{0<|c|, |d|\leq N(1-\epsilon)\\|c+d|>N(1-\epsilon)}} \frac{1}{|c|^3|d|}$$$$+ O(\ln^2 N)\sum_{\substack{0<|c|, |d|\leq N\\|c|>N(1-\epsilon)\\
|c+d|>N(1-\epsilon)}} \frac{1}{|c|^3|d|}+\sum_{\substack{0<|c|, |d|\leq N\\|d|>N(1-\epsilon)\\
|a+b|=|c+d|>N(1-\epsilon)}} \frac{1}{|a||b||c|^3|d|}.$$
The first and second terms on the r.h.s. are $O(\frac{\ln^3 N}{N})$
(in particular the first term is estimated by Lemma 3.3 in \cite{TVimrn}).
Concerning the third sum we can estimate it by
$$\sum_{n>N(1-\epsilon)} \big (\sum_{\substack{0<|a|, |b|<N\\|a+b|=n}} 
\frac 1{|a||b|}\big ) \cdot \big (\sum_{\substack{0<|c|\leq N\\ N(1-\epsilon)< |d|\leq 
N\\|c+d|=n}} 
\frac 1{|c|^3|d|}\big ), 
$$
and since by elementary considerations
$$\sum_{\substack{0<|a|, |b|<N\\|a+b|=n}} 
\frac 1{|a||b|} \lesssim \frac Nn$$,
we can continue the estimate above:
\begin{align*}...\lesssim \sum_{n>N(1-\epsilon)} \sum_{\substack{0<|c|\leq N\\ N(1-\epsilon)< |d|\leq 
N\\|c+d|=n}} 
\frac 1{|c|^3|d|}\lesssim \sum_{\substack{0<|c|\leq N\\ N(1-\epsilon)< |d|\leq 
N\\|c+d|>N(1-\epsilon)}} \frac 1{|c|^3|d|}\\\nonumber
\lesssim \sum_{\substack{0<|c|\leq N\\ N(1-\epsilon)< |d|\leq 
N}} \frac 1{|c|^3|d|}= O(\epsilon).
\end{align*}
Concerning the sum on $\tilde {\mathcal A}_N^{c}(4)$ we can apply the Minkowski inequality
and we get:
$$
\sum_{2|a|>N(1-\epsilon)} \frac 1{|a|^3} + \sum_{\substack{0<|a|, |b|\leq N\\|a+b|>N(1-\epsilon)}} \frac{1}{|a||b|^2}=
O(\frac{\ln N}{N})+ O(\epsilon),
$$
where the estimate of the second term is obtained as follows:
\begin{align}\label{adtil}\sum_{\substack{0<|a|, |b|\leq N\\|a+b|>N(1-\epsilon)}} \frac{1}{|a||b|^2}
\leq \sum_{\substack{0<|a|, |b|\leq N(1-\epsilon)\\|a+b|>N(1-\epsilon)}} \frac{1}{|a||b|^2}
+\sum_{\substack{0<|a|, |b|\leq N\\|b|>N(1-\epsilon)\\|a+b|>N(1-\epsilon)}} \frac{1}{|a||b|^2}
\\\nonumber+\sum_{\substack{0<|b|\leq N\\N(1-\epsilon)<|a|\leq N\\|a+b|>N(1-\epsilon)}} \frac{1}{|a||b|^2}
=O(\frac{\ln N}{N})+O(\epsilon).\end{align}
Here we have used Lemma 3.3 in \cite{TVimrn} to estimate the first term on the r.h.s.
The proof of \eqref{323sla} is similar.

\hfill$\Box$

We shall also need the following estimates.
\begin{prop}\label{fifth order} We have the following estimates:
\begin{align*}
&\|\int (S\varphi)^2 ({\mathcal H} S\varphi_x) [(Id - S^2) (S\varphi S\varphi_x)]\|_{L^2(d\mu_{3/2})}= O(\sqrt{\frac{\ln N}{N}})+O(\sqrt \epsilon),
\\\nonumber
&\|\int (S\varphi)^3 
{\mathcal H}[(Id - S^2) (S\varphi S\varphi_x)_x]\|_{L^2(d\mu_{3/2})}
= O(\sqrt{\frac{\ln N}{N}})+O(\sqrt \epsilon),
  \\\nonumber
& \| \int (S\varphi){\mathcal H}(S\varphi S \varphi_x) [(Id - S^2) (S\varphi S\varphi_x) ]\|_{L^2(d\mu_{3/2})}
= O(\sqrt{\frac{\ln N}{N}})+O(\sqrt \epsilon),
\\\nonumber
&\|\int (S\varphi)^2{\mathcal H}[S\varphi_x (Id - S^2) (S\varphi S\varphi_x)] \|_{L^2(d\mu_{3/2})}= O(\sqrt{\frac{\ln N}{N}})+O(\sqrt \epsilon)
\\\nonumber
&\|\int (S\varphi)^2{\mathcal H}[S\varphi (Id - S^2) (S\varphi S\varphi_x)_x]\|_{L^2(d\mu_{3/2})}= O(\sqrt{\frac{\ln N}{N}})+O(\sqrt \epsilon).
\end{align*}
\end{prop}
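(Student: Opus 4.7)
The plan is to mimic the derivation of \eqref{2y} in Proposition \ref{spcaart} and the proof of Proposition \ref{spca2}. Each of the five integrals is a quintic form in the Gaussian variables $g_n$ from \eqref{series}. After Fourier--expanding the five $S\varphi$--factors, the $x$--integration forces $a+b+c+d+e=0$, and the inner $(Id-S^2)$ produces the cut--off $1-\psi_\ep^2((d+e)/N)$, which vanishes unless $|d+e|>N(1-\ep)$, equivalently $|a+b+c|>N(1-\ep)$. The remaining $\de_x$'s and Hilbert transforms contribute symbols $\pm in$ and $-i\,\mathrm{sign}(n)$, so each integral takes the form
\begin{equation*}
J_\ell = \sum_{(a,b,c,d,e)\in \mathcal{A}_N(5)} \Gamma^\ep_N(a,b,c,d,e)\, K_\ell(a,b,c,d,e)\, g_a g_b g_c g_d g_e,
\end{equation*}
with $\Gamma^\ep_N$ as in \eqref{Gamma} (after a harmless relabelling in the cases where $(Id-S^2)$ is not acting on the last two variables) and $|K_\ell|$ bounded by an explicit rational weight. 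Because we now work with $d\mu_{3/2}$, at least three of the five variables carry the heavier decay $|n|^{-3/2}$; this is the key structural gain over the $d\mu_1$ analysis of Proposition \ref{spcaart}.

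Next I would decompose $\mathcal{A}_N(5) = \tilde{\mathcal{A}}_N(5) \cup \tilde{\mathcal{A}}_N^c(5)$ exactly as in \eqref{splitprim}. On $\tilde{\mathcal{A}}_N(5)$ Proposition \ref{orth1} produces orthogonality of the monomials $g_a g_b g_c g_d g_e$ modulo permutation, so $\|J_\ell\|_{L^2_\omega}^2$ reduces to the diagonal sum of $|K_\ell|^2$ over $\tilde{\mathcal{A}}_N(5)\cap\{|a+b+c|>N(1-\ep)\}$. Dropping the zero--sum constraint, this factorises into one--variable sums $\sum_{0<|n|\leq N}|n|^{-\ga}$, $\ga\geq 1$, multiplied by a single two--variable tail
\begin{equation*}
\sum_{\substack{0<|d|,|e|\leq N\\ |d+e|>N(1-\ep)}} \frac{1}{|d|^{\al}|e|^{\be}} = O\!\left(\frac{\ln N}{N}\right) + O(\ep),
\end{equation*}
obtained exactly as in Lemma 3.3 of \cite{TVimrn} and the estimate \eqref{adtil}. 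On $\tilde{\mathcal{A}}_N^c(5)$ I would subdivide according to which pair of indices is opposite (the ten subsets appearing in \eqref{Aort} and its analogues) and apply Proposition \ref{orth2}; each subset is then controlled, after Minkowski, by an elementary tail sum of the same type as above. Summing yields $\|J_\ell\|_{L^2_\omega}^2=O(\ln N/N)+O(\ep)$ for every $\ell$, and taking square roots produces the claimed bounds.

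The main technical obstacle is that in the third through fifth integrals the Hilbert transform appears \emph{inside} the integrand, acting on a product whose Fourier variable is itself a sum of summation indices. This produces a sign--of--a--partial--sum factor in $K_\ell$ that is harmless for orthogonality but breaks the cheap symmetrisation cancellation used in Lemmas \ref{thir} and \ref{thirsal} to annihilate the interior contribution $\mathcal{B}_N^\ep$ at fourth order. One is therefore forced to take absolute values in each sub--sum and rely entirely on the heavier $|n|^{-3/2}$ weights from $d\mu_{3/2}$; the bookkeeping over the ten pairing sub--cases of $\tilde{\mathcal{A}}_N^c(5)$ for each of the five integrals is tedious but routine, and the extra decay ensures every resulting elementary sum closes at the required rate $O(\sqrt{\ln N/N})+O(\sqrt\ep)$.
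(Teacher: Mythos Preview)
Your proposal is correct and follows essentially the same route as the paper: Fourier expansion into a quintic Gaussian chaos with the weight coming from $d\mu_{3/2}$, the decomposition $\mathcal{A}_N(5)=\tilde{\mathcal{A}}_N(5)\cup\tilde{\mathcal{A}}_N^c(5)$, orthogonality via Propositions~\ref{orth1} and~\ref{orth2}, and reduction to elementary tail sums of the type handled by Lemma~3.3 of \cite{TVimrn}. The paper carries out in detail only the first integral (with weight $|a|^{-3/2}|b|^{-3/2}|c|^{-1/2}|d|^{-3/2}|e|^{-1/2}$) and treats each of the nine nontrivial pairing subcases of $\tilde{\mathcal{A}}_N^c(5)$ individually, which is exactly the ``tedious but routine'' bookkeeping you anticipate.
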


\begin{proof} We focus on the first estimate (the others can be treated by the same arguments as below). We have to prove 
\begin{align}\label{chloe5}&\|\sum_{(a,b,c,d,e)\in {\mathcal A}_N(5)}
\frac{sign(e)\Gamma^\epsilon_N(a,b,c,d,e) }{|a|^{3/2}|b|^{3/2}|c|^{1/2}|d|^{3/2}|e|^{1/2}}
g_ag_bg_cg_dg_e
\|_{L^2_\omega}\\\nonumber&=O(\sqrt{\frac{\ln N}{N}})+O(\sqrt \epsilon),
\end{align}
where
$\Gamma^\epsilon_N(a,b,c,d,e)$ and 
${\mathcal A}_N(5)$ are defined in \eqref{Gamma} and \eqref{Antilde}.
Next we split ${\mathcal A}_N(5)=\tilde {\mathcal A}_N(5)\cup \tilde {\mathcal A}_N^{c}(5)$ (see \eqref{splitprim})
and we get by Proposition \ref{orth1}
\begin{align}\label{impfan}&\|\sum_{(a,b,c,d,e)\in \tilde {\mathcal A}_N(5)}
\frac{sign(e) \Gamma^\epsilon_N(a,b,c,d,e) }{|a|^{3/2}|b|^{3/2}|c|^{1/2}|d|^{3/2}|e|^{1/2}}
g_ag_bg_cg_dg_e
\|_{L^2_\omega}^2
\\\nonumber
& \leq \sum_{\substack{0<|a|,|b|, |c|, |d| , |e|\leq N
\\|d+e|>N(1-\epsilon)}}
\frac{1}{|a|^3|b|^3|c||d|^3|e|}
\\\nonumber &\lesssim  \big( \sum_{\substack{|a| , |b|, |c|\leq N
\\|a+b+c|>N(1-\epsilon)}}
\frac{1}{|a|^3|b|^3 |c|}\big) \cdot \big( \sum_{\substack{|d| , |e|\leq N
\\|d+e|>N(1-\epsilon)}}
\frac{1}{|d|^3|e|}\big)
\end{align}
(here the constraint $|d+e|>N(1-\epsilon)$ comes from the cut--off $\psi_\epsilon$).
Next notice that
\begin{align}\label{casaver}&\sum_{\substack{|d| , |e|\leq N
\\|d+e|>N(1-\epsilon)}}
\frac{1}{|d|^3|e|}\\\nonumber
&\leq \sum_{\substack{0<|d| , |e|\leq N(1-\epsilon)
\\|d+e|>N(1-\epsilon)}}
\frac{1}{|d|^3|e|} +  \sum_{\substack{|d| \geq N(1-\epsilon)
\\|d+e|>N(1-\epsilon)}}
\frac{1}{|d|^3|e|}+ \sum_{\substack{N(1-\epsilon)\leq |e| \leq N
\\|d+e|>N(1-\epsilon)}}
\frac{1}{|d|^3|e|}\\\nonumber&=O(\frac{\ln N}{N}) + O(\epsilon)\end{align}
where we have used Lemma 3.3 of \cite{TVimrn} to estimate the first term
on the r.h.s.
By a similar argument
$$ \sum_{\substack{|a| , |b|, |c|\leq N
\\|a+b+c|>N(1-\epsilon)}}
\frac{1}{|a|^3|b|^3 |c|}=O(\frac{\ln N}{N}) + O(\epsilon).$$ 
Hence  the l.h.s. in \eqref{impfan} can be estimated by 
$O(\frac{\ln^2 N}{N^2}) + O(\epsilon^2)$.
Next we consider the set
\begin{equation}\label{Aortult}
 \tilde {\mathcal A}^{c}_{N,a=-b}= \{(a,b,c,d,e)\in {\mathcal A}_N(5)|a=-b\}.
 \end{equation}
By Proposition \ref{orth2} we can estimate the sum on the set 
$\tilde {\mathcal A}^{c}_{N,a=-b}\cap \{(a,b,c,d,e)|
|a+b+c|>N(1-\epsilon)\}$ by
$$\sum_{0<|a|\leq N} \frac 1{|a|^3} \big( \sum_{\substack{|d+e|>N(1-\epsilon), \\0<|d|,|e|\leq N, N(1-\epsilon)<|c|\leq N}} \frac{1}{|c||d|^3|e|}\big )^\frac 12\lesssim \sqrt \epsilon \big( \sum_{\substack{|d+e|>N(1-\epsilon), \\0<|d|,|e|\leq N}} \frac{1}{|d|^3|e|}\big )^\frac 12
$$
(the condition $|c|>N(1-\epsilon)$ comes from $a+b+c=c$ on the set $a=-b$),
and by \eqref{casaver} we can continue the estimate 
$$...\leq O(\sqrt \epsilon)[O(\sqrt {\frac{\ln N}{N}}) +O(\sqrt {\epsilon})].$$
We can treat the sum on $\tilde {\mathcal A}^{c}_{N,a=-c}$
by the following estimates:
$$\sum_{0<|a|\leq N} \frac 1{|a|^2} \big( \sum_{\substack{|d+e|>N(1-\epsilon), \\0<|d|,|e|\leq N}} \frac{1}{|b|^3|d|^3|e|}\big )^\frac 12\lesssim  \big( \sum_{\substack{|d+e|>N(1-\epsilon), \\0<|d|,|e|\leq N}} \frac{1}{|d|^3|e|}\big )^\frac 12
$$$$=O(\sqrt {\frac{\ln N}{N}}) +O(\sqrt {\epsilon}),$$
where we used again \eqref{casaver}.
Concerning the sum on $\tilde {\mathcal A}^{c}_{N,a=-d}$, 
we are reduced to the estimate:
\begin{align}\label{butcolnew} &\sum_{0<|a|\leq N} \frac 1{|a|^3} 
\big (\sum_{\substack{|a+b+c|>N(1-\epsilon)\\
0<|d|\leq N}} \frac 1{|b|^3|c||e|}\big )^{\frac 12}\\\nonumber \leq 
\sum_{0<|a|\leq N} \frac 1{|a|} 
[\big (\sum_{\substack{|a+b+c|>N(1-\epsilon)\\
|e+d|>N(1-\epsilon)\\0<|e|,|d|\leq N(1-\epsilon)}} &\frac 1{|b|^3|c||d|^4|e|}\big )^{\frac 12}
+\big (\sum_{\substack{|a+b+c|>N(1-\epsilon)\\
|e+d|>N(1-\epsilon)\\|d|\geq N(1-\epsilon)}} \frac 1{|b|^3|c||d|^4|e|}\big )^{1/2}]
\\
\nonumber &
+\sum_{0<|a|\leq N} \frac 1{|a|} \big (\sum_{\substack{|a+b+c|>N(1-\epsilon)\\
|e+d|>N(1-\epsilon)\\N(1-\epsilon)\leq |e|\leq N}} \frac 1{|b|^3|c||d|^4|e|}\big )^{1/2}.
\end{align}
Notice that the first two terms on the r.h.s. give a contribution 
$O(\frac{\ln^2 N}{\sqrt N})$ (in particular to estimate the first term we used
Lemma 3.3 in \cite{TVimrn}).
Concerning the last term, due to the fact $a=-d$ and $|-d+b+c|>N(1-\epsilon)$,
it can be controlled by
$$\sum_{0<|a|\leq N} \frac 1{|a|^2} \big (\sum_{\substack{|a+b+c|>N(1-\epsilon)\\
|e+d|>N(1-\epsilon)\\N(1-\epsilon)$$$$\leq |e|\leq N}} \frac 1{|b|^3|c||d|^2|e|}\big )^{1/2}
$$$$\leq \sum_{0<|a|\leq N} \frac 1{|a|^2} \sqrt \epsilon \big (\sum_{\substack{|-d+b+c|>N(1-\epsilon)}} \frac 1{|b|^3|c||d|^2}\big )^{1/2} =O(\sqrt \epsilon)[O(\sqrt {\frac{\ln N}N})+O(\sqrt \epsilon)].$$
At the last step we have used the estimate 
$$
\sum_{\substack{|-d+b+c|>N(1-\epsilon)}} \frac 1{|b|^3|c||d|^2}=O(\epsilon)+ O(\frac{\ln N}N),
$$
that in turn follows by splitting the constraint in four subdomains given by:
$$\{|-d+b+c|>N(1-\epsilon), |d|,|b|, |c|\leq N(1-\epsilon)\},
\{|-d+b+c|>N(1-\epsilon), |d| > N(1-\epsilon)\},$$
$$\{|-d+b+c|>N(1-\epsilon), |b| > N(1-\epsilon)\},
\{|-d+b+c|>N(1-\epsilon), |c| > N(1-\epsilon)\}.$$
The sum on the first constraint can be estimated by Lemma 3.2 in \cite{TVjmpa},
the estimate on the second and third one are trivial, and the last one gives a contribution $O(\epsilon)$.\\
Concerning the sum on $\tilde {\mathcal A}^{c}_{N,a=-e}$, notice that
the constraint $|d+e|>N(1-\epsilon)$ is equivalent to $|d-a|>N(1-\epsilon)$ and
hence we are reduced to treat:
\begin{align}\label{butcolnew345} &\sum_{0<|a|\leq N} \frac 1{|a|^2} 
\big (\sum_{\substack{|d-a|>N(1-\epsilon)\\
0<|b|, |c|, |d|\leq N}} \frac 1{|b|^3|c||d|^3}\big )^{\frac 12}\\\nonumber
\leq \sqrt {\ln N}
\big (\sum_{|a|\geq N(1-\epsilon)/2} \frac 1{|a|^2} 
&+ 
\sum_{|a|\leq N(1-\epsilon)/2} \frac 1{|a|^2} (\sum_{|d|\geq N(1-\epsilon)/2} 
\frac 1{|d|^3})^\frac 12\big )=O(\frac{\sqrt {\ln N}}N).
\end{align}
On the set $\tilde {\mathcal A}^{c}_{N,b=-c}$ 
the constraint $|a+b+c|>N(1-\epsilon)$ becomes $|a|>N(1-\epsilon)$
and hence the estimate follows by
$$\sum_{0<|b|\leq N} \frac 1{|b|^2} 
\big (\sum_{\substack{|a|>N(1-\epsilon)\\
0<|d|, |e|\leq N}} \frac 1{|a|^3|d|^3|e|}\big )^{\frac 12}=O(\frac{\sqrt {\ln N}}N).
$$
Notice that the sum on the sets 
$\tilde {\mathcal A}^{c}_{N,b=-d}$ and $\tilde {\mathcal A}^{c}_{N,b=-e}$
are similar to the sum on $\tilde {\mathcal A}^{c}_{N,a=-d}$ and $\tilde {\mathcal A}^{c}_{N,a=-e}$, that have been already treated.
Next we focus on the sum on the set  $\tilde {\mathcal A}^{c}_{N,c=-d}$.
In this case the constraint $|a+b+c|>N(1-\epsilon)$ becomes
$|a+b-d|>N(1-\epsilon)$ and we
are reduced to estimate
\begin{align*}&\sum_{0<|d|\leq N} \frac 1{|d|^2} 
\big (\sum_{\substack{|a+b-d|>N(1-\epsilon)\\
0<|d|, |e|\leq N}} \frac 1{|a|^3|b|^3|e|}\big )^{\frac 12}
\\\nonumber
&\leq \sqrt {\ln N}
\big (\sum_{|d|\geq N(1-\epsilon)/2} \frac 1{|d|^2} 
+ 
\sum_{|a+b|\geq N(1-\epsilon)/2} \frac 1{|a|^3 |b|^3}\big )^\frac 12=O(\frac{\sqrt {\ln N}}N).\end{align*}
The last sum to be considered is on the set  $\tilde {\mathcal A}^{c}_{N,c=-e}$,
where the constraint 
$|d+e|>N(1-\epsilon)$ can be rewritten as
$|d-c|>N(1-\epsilon)$,
hence we conclude by the following estimates:
\begin{align*}&\sum_{0<|c|\leq N} \frac 1{|c|} 
\big (\sum_{\substack{
0<|a|, |b|, |c|, |d|\leq N\\|d-c|>N(1-\epsilon)}} \frac 1{|a|^3|b|^3|d|^3}\big )^{\frac 12}
\leq \sum_{0<|c| \leq N(1-\epsilon)}
\big (\sum_{\substack{
0<|d| \leq N\\|d-c|>N(1-\epsilon)}} \frac 1{|c|^2|d|^3}\big )^{\frac 12}
\\\nonumber
& + \sum_{N(1-\epsilon)\leq |c| \leq N} \frac 1{|c|}
\leq \sum_{0<|c| \leq N(1-\epsilon)}
\big (\sum_{\substack{
0<|d| \leq N(1-\epsilon)\\|d-c|>N(1-\epsilon)}} \frac 1{|c|^2|d|^3}\big )^{\frac 12} 
\\\nonumber
&+ \sum_{
0<|c| \leq N(1-\epsilon)}
\big (\sum_{\substack{
|d| \geq N(1-\epsilon)\\|d-c|>N(1-\epsilon)}} \frac 1{|c|^2|d|^3}\big )^{\frac 12}+O(\epsilon)
=O(\frac 1{\sqrt N}) + O(\epsilon),\end{align*}
where we used Lemma 3.3 in \cite{TVjmpa} to estimate the first sum on the r.h.s.

\end{proof}

\begin{prop}\label{sixth}
We have the following estimates:
\begin{align*}
\|\int (S\varphi)^4 (Id - S^2) (S\varphi S\varphi_x)
\|_{L^2(d\mu_{3/2})}=O(\frac 1{\sqrt N}).
\end{align*}
\end{prop}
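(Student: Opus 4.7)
The plan is to follow the same overall scheme used for Propositions \ref{spca2} and \ref{fifth order}: expand $\int (S\varphi)^4(Id-S^2)(S\varphi \, S\varphi_x)\,dx$ in Fourier and in the Gaussians $g_n$ of \eqref{series}, and bound the resulting sixth-order chaos in $L^2_\omega$. Setting $n_1,\dots,n_4$ to index the four copies of $S\varphi$, $n_5$ the $S\varphi$ factor inside $S\varphi \, S\varphi_x$ and $n_6$ the $S\varphi_x$ factor, the integral becomes
\[
\sum_{(n_1,\dots,n_6)\in\mathcal{A}_N(6)}\Theta_N^\epsilon(n_1,\dots,n_6)\,\frac{sign(n_6)}{|n_1|^{3/2}|n_2|^{3/2}|n_3|^{3/2}|n_4|^{3/2}|n_5|^{3/2}|n_6|^{1/2}}\prod_{i=1}^6 g_{n_i},
\]
where $\mathcal{A}_N(6)=\{(n_1,\dots,n_6)\in\Z^6: 0<|n_i|\le N,\ \sum n_i=0\}$ and $\Theta_N^\epsilon=\prod_i\psi_\epsilon(n_i/N)\cdot[1-\psi_\epsilon^2((n_5+n_6)/N)]$. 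The decisive constraint coming from the cut-off is $|n_5+n_6|>N(1-\epsilon)$, equivalently $|n_1+n_2+n_3+n_4|>N(1-\epsilon)$.

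As in the proof of \eqref{chloe5}, I would split $\mathcal{A}_N(6)=\tilde{\mathcal{A}}_N(6)\cup\tilde{\mathcal{A}}_N^c(6)$ in the sense of Section \ref{orth}. On $\tilde{\mathcal{A}}_N(6)$ the Gaussian products are pairwise orthogonal by (the sixth-order analogue of) Proposition \ref{orth1}, so Minkowski together with orthogonality reduces the $L^2_\omega$-norm squared to the deterministic sum
\[
\sum_{\substack{0<|n_i|\le N\\ n_1+\cdots+n_6=0\\ |n_5+n_6|>N(1-\epsilon)}}\frac{1}{|n_1|^3|n_2|^3|n_3|^3|n_4|^3|n_5|^3|n_6|}.
\]
Setting $k=n_5+n_6=-(n_1+n_2+n_3+n_4)$, and factorising as $\sum_{|k|>N(1-\epsilon)}T_4(-k)\,T_2(k)$, the convolution bounds $T_4(k)=O(\langle k\rangle^{-3})$ (from four convolutions of $|n|^{-3}$) and $T_2(k)=O(N^{-1}\ln N)$ for $|k|\sim N$ (treating separately $|n_5|\gtrsim N$ and $|n_6|\gtrsim N$) give a total of size $O(N^{-3})$, which is much better than what is required.

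The main technical point is the complementary set $\tilde{\mathcal{A}}_N^c(6)$, which I would decompose into the pieces $\tilde{\mathcal{A}}^c_{N,n_i=-n_j}$ for each pair $i\neq j$, exactly as in \eqref{Aortult}. For each such piece, Proposition \ref{orth2} (applied at level six) reduces the norm to a sum over four free indices, and in every case the constraint $|n_5+n_6|>N(1-\epsilon)$ forces either one of the remaining indices or one of the free partial sums to exceed $N(1-\epsilon)$. This produces a loss of a single factor of $N^{-1}$ (coming either from $\sum_{|n|\gtrsim N}|n|^{-2}$ or from Lemma 3.3 of \cite{TVimrn}), while the remaining weights $|n|^{-3}$ are absolutely summable. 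Summing these contributions bounds the $L^2_\omega$-norm by $O(N^{-1/2})$; the pair $n_5=-n_6$ is excluded by the cut-off, so it produces no contribution. The main obstacle is the careful bookkeeping of the ten cases in $\tilde{\mathcal{A}}_N^c(6)$, particularly the pairs involving $n_5$ or $n_6$ where one of the weights is only $|n|^{-1}$ instead of $|n|^{-3}$; here the constraint $|n_5+n_6|>N(1-\epsilon)$ must be used sharply, as in the derivation of \eqref{butcol}, to extract the decay.
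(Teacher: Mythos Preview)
Your approach is correct in spirit but works much harder than necessary. The paper's proof bypasses the decomposition $\tilde{\mathcal A}_N(6)\cup\tilde{\mathcal A}_N^c(6)$ entirely and uses only the Minkowski inequality: since $\|g_{n_1}\cdots g_{n_6}\|_{L^2_\omega}\lesssim 1$ for each fixed sextuple, the $L^2_\omega$-norm is bounded by
\[
\sum_{\substack{(n_1,\dots,n_6)\in\mathcal A_N(6)\\|n_1+n_2+n_3+n_4|>N(1-\epsilon)}}\frac{1}{|n_1|^{3/2}|n_2|^{3/2}|n_3|^{3/2}|n_4|^{3/2}|n_5|^{3/2}|n_6|^{1/2}}.
\]
Bounding $|n_6|^{-1/2}\le 1$ and dropping the constraint $\sum n_i=0$, one is left with a free sum over $n_1,\dots,n_5$; since $|n_1+n_2+n_3+n_4|>N(1-\epsilon)$ forces $\max_{1\le i\le 4}|n_i|\gtrsim N$, one of the factors $|n_i|^{-3/2}$ contributes $\sum_{|n|\gtrsim N}|n|^{-3/2}=O(N^{-1/2})$ while the remaining four are absolutely summable. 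This gives the claim in one line.

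The reason your orthogonality machinery is superfluous here is that, unlike in the quartic and quintic terms, five of the six weights are already in $\ell^1$; the single non-summable factor $|n_6|^{-1/2}$ is disposed of trivially by $|n_6|\ge 1$. Your route would eventually close as well --- modulo formulating a six-variable analogue of Proposition~\ref{orth2} and tracking the double-pair subcases of $\tilde{\mathcal A}_N^c(6)$, which you do not address --- and it even yields a sharper power on the generic piece, but none of that extra structure is needed for the stated $O(N^{-1/2})$ bound.
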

\begin{proof}
We have to show 
\begin{align}\label{andreanah}\|\sum_{(a,b,c,d,e,f)\in {\mathcal A}_N(6)}
\frac{sign(f)\Lambda^\epsilon_N(a,b,c,d,e,f) g_ag_bg_cg_dg_e g_f}{|a|^{3/2}|b|^{3/2}|c|^{3/2}|d|^{3/2}|e|^{3/2}
|f|^{1/2}}&\|_{L^2_\omega}= O(\frac 1{\sqrt N}), \end{align}
where:
$$\Lambda^\epsilon_N(a,b,c,d,e,f)=\psi(\frac aN)_\epsilon
\psi_\epsilon(\frac bN)\psi_\epsilon(\frac cN)\psi_\epsilon(\frac dN)\psi_\epsilon(\frac eN)\psi_\epsilon(\frac fN)[1- \psi_\epsilon^2(\frac{|e+f|}{N})]$$
and
\begin{equation*}{\mathcal A}_N(6)=\{(a,b,c,d,e,f)\in {\Z}^6|0<|a|, |b|, |c|, |d|,
|e|, |f|
\leq N, a+b+c+d+e+f=0\}.\end{equation*}
By the Minkowski inequality and by the cut--off property of $\psi_\epsilon$ 
we can estimate the l.h.s. in \eqref{andreanah} by 
\begin{align}\sum_{\substack{(a,b,c,d,e,f)\in {\mathcal A}_N(6)\\|a+b+c+d|>N(1-\epsilon)} }
\frac{1}{|a|^{3/2}|b|^{3/2}|c|^{3/2}|d|^{3/2}|e|^{3/2}
|f|^{1/2}}\\\nonumber
\leq \sum_{\substack{0<|a|,|b|,|c|,|d|,|e|\leq N\\|a+b+c+d|>N(1-\epsilon)} }
\frac{1}{|a|^{3/2}|b|^{3/2}|c|^{3/2}|d|^{3/2}|e|^{3/2}}=O(\frac 1{\sqrt N})
\end{align}
where we used
$|a+b+c+d|>N(1-\epsilon)$ implies $\max\{|a|, |b|, |c|, |d|\}\geq N/4(1-\epsilon)$.
\end{proof}

\noindent {\em Proof of Proposition \ref{leYu}.} For simplicity we denote $\tilde G=G_N^\epsilon$ and $S=S^\epsilon_N$, then
we get for a general function $v$ 
\begin{align}\label{ide}&\tilde G(v)= E_{3/2}(Sv) + (\|v\|_{\dot H^{3/2}}^2 - \|Sv\|_{\dot H^{3/2}}^2),
\end{align}
where
\begin{align}\label{Epri}E_{3/2}(u)= \|u\|^2_{\dot H^{3/2}} - (\int \frac 32 u u_x^2 +\frac 12 u (Hu_x)^2) \\\nonumber
- \int (\frac 13 u^3 Hu_x +\frac 14 u^2H(uu_x)) - \frac1{20} \int u^5.\end{align}
Arguing as in Proposition~\ref{leY} and by using the notation $u(t)=\pi_N\Phi_N^\epsilon(t) \varphi$ we get:
\begin{align} \label{32I}&\frac d{dt} E_{3/2}(Su)_{t=0}=
[\int Su_t {\mathcal H}S\varphi_{xxx} + \int S\varphi {\mathcal H} Su_{txxx} \\\nonumber -(\int \frac 32 & S u_t (S\varphi_x)^2 +\frac 12 S u_t ({\mathcal H}S\varphi_x)^2) - (\int 3 S \varphi 
S\varphi_xSu_{tx} + S \varphi ({\mathcal H}S\varphi_x) {\mathcal H}Su_{tx})\\\nonumber
&- \int ( S \varphi)^2 Su_t {\mathcal H}S \varphi_x - \int \frac 13 (S \varphi)^3 {\mathcal H}S u_{tx} \\
\nonumber & - \frac 12 \int (S\varphi) S u_t {\mathcal H}(S\varphi S\varphi_x) 
- \frac 14 \int (S\varphi)^2{\mathcal H}(Su_t S\varphi_x) \\
\nonumber &- \frac 14 \int (S\varphi)^2{\mathcal H}(S\varphi Su_{tx}) - \frac1{4} \int (S\varphi)^4 Su_t ]_{S u_t= (Id - S^2) S\varphi S\varphi_x}.
\end{align}
Moreover by using the equation solved by $u(t,x)$ we get:
\begin{align}\nonumber
&\frac{d}{dt} (\|u\|_{\dot H^{3/2}}^2 - \|Su\|_{\dot H^{3/2}}^2)_{t=0} \\
\nonumber &=\int  (- {\mathcal H} \varphi_{xx} - S(S\varphi S\varphi_x)) {\mathcal H}\varphi_{xxx}  
+  \int  \varphi {\mathcal H}(- {\mathcal H} \varphi_{xx} - S(S\varphi S \varphi_x))_{xxx} \\
\nonumber & +\int  ( {\mathcal H} S\varphi_{xx} + S^2(S\varphi S\varphi_x)) {\mathcal H}S \varphi_{xxx}  
+  \int  S \varphi {\mathcal H}({\mathcal H} S\varphi_{xx}  + S^2(S\varphi S\varphi_x))_{xxx}.
\end{align}
By properties of ${\mathcal H}$ and integration by parts we get:
\begin{align}\label{32II}
&\frac{d}{dt} (\|u\|_{\dot H^{3/2}}^2 - \|Su\|_{\dot H^{3/2}}^2)_{t=0} \\
\nonumber &=\int  (- S(S\varphi S\varphi_x)) {\mathcal H}\varphi_{xxx}  
+  \int  \varphi {\mathcal H}(- S(S\varphi S \varphi_x))_{xxx} \\
\nonumber & +\int  (S^2(S\varphi S\varphi_x)) {\mathcal H}S \varphi_{xxx}  
+  \int  S \varphi {\mathcal H}(S^2(S\varphi S\varphi_x))_{xxx}\\\nonumber 
&= 2 \int  \varphi {\mathcal H}(- S(S\varphi S \varphi_x))_{xxx} +2 \int  (S^2(S\varphi S\varphi_x)) {\mathcal H}S \varphi_{xxx}. 
\end{align}
By combining \eqref{32I}, \eqref{32II} with \eqref{ide} we deduce that the terms of order three
in the expression $\frac d{dt} (\tilde G (\pi_N \Phi_N^\epsilon(t) \varphi)_{t=0}$  give a trivial  contribution, as the following computation shows:
\begin{align*}&\hbox{ cubic terms in \eqref{32I}+ \eqref{32II} }
\\\nonumber &=2 \int  \varphi {\mathcal H}(- S(S\varphi S \varphi_x))_{xxx} +2 \int  (S^2(S\varphi S\varphi_x)) {\mathcal H} S \varphi_{xxx}  
\\\nonumber &+\int (Id - S^2) (S\varphi S\varphi_x) {\mathcal H}S\varphi_{xxx} + \int S\varphi {\mathcal H} (Id - S^2) (S\varphi S\varphi_x)_{xxx}\\\nonumber
&= -2 \int  \varphi {\mathcal H} (S(S\varphi S \varphi_x))_{xxx} +2 \int  (S^2(S\varphi S\varphi_x)) {\mathcal H}S \varphi_{xxx}  
\\\nonumber & +\int (S\varphi S\varphi_x) {\mathcal H}S\varphi_{xxx} + \int S\varphi {\mathcal H} (S\varphi S\varphi_x)_{xxx}
\\\nonumber & -\int S^2 (S\varphi S\varphi_x) {\mathcal H}S\varphi_{xxx} - \int S\varphi {\mathcal H} S^2 (S\varphi S\varphi_x)_{xxx}=0.
\end{align*}
Next by combining again \eqref{32I}, \eqref{32II} with \eqref{ide} we compute the contribution to $\frac d{dt} (\tilde G (\pi_N \Phi_N^\epsilon(t) \varphi)_{t=0}$
given by terms of order four:
\begin{align*}&\hbox{ quartic terms in \eqref{32I}+ \eqref{32II} }
\\\nonumber &=
- \int \frac 32  (Id - S^2) (S\varphi S\varphi_x) (S\varphi _x)^2 - 3 S \varphi S\varphi_x
(Id - S^2) (S\varphi S\varphi_x)_x
\\\nonumber &- \int \frac 12 (Id - S^2) (S\varphi S\varphi_x) ({\mathcal H}S\varphi_x)^2 -
S \varphi ({\mathcal H} S\varphi_x) {\mathcal H} (Id - S^2) (S\varphi S\varphi_x)_x\\\nonumber
&=
- \int \frac 32  (Id - S^2) (S\varphi S\varphi_x) (S\varphi _x)^2 
- \int \frac 12 (Id - S^2) (S\varphi S\varphi_x) ({\mathcal H} S\varphi_x)^2 \\\nonumber &-
S \varphi ({\mathcal H} S\varphi_x) {\mathcal H} (Id - S^2) (S\varphi S\varphi_x)_x,
\end{align*}
where we used
$$2 \int  S \varphi S\varphi_x
(Id - S^2) (S\varphi S\varphi_x)_x
= \int \partial_x ((Id - S^2)^{1/2} (S\varphi S\varphi_x))^2 =0.
$$
By Proposition~\ref{spca2} we can estimate the terms above.
Next we focus on the quintic terms:
\begin{align*} 
\hbox{ quintic terms in \eqref{32I}+ \eqref{32II} }
\\\nonumber =- \int (S\varphi)^2 ({\mathcal H} S\varphi_x) [(Id - S^2) (S\varphi S\varphi_x)]
& - \frac 13 \int (S\varphi)^3 
{\mathcal H}[(Id - S^2) (S\varphi S\varphi_x)_x]  \\\nonumber
 - \frac 12 \int (S\varphi){\mathcal H}(S\varphi S \varphi_x) &[(Id - S^2) (S\varphi S\varphi_x) ]
\\\nonumber - \frac 14 \int (S\varphi)^2{\mathcal H}[S\varphi_x (Id - S^2) (S\varphi S\varphi_x)]
&- \frac 14 \int (S\varphi)^2{\mathcal H}[S\varphi (Id - S^2) (S\varphi S\varphi_x)_x]
\end{align*}
Notice that those terms can be controlled by Proposition \ref{fifth order}.
Next we notice that 
\begin{align*}&\hbox{ sixth order terms in \eqref{32I}+ \eqref{32II} }
\\\nonumber =
& 
- \frac1{4} \int (S\varphi)^4 (Id - S^2) (S\varphi S\varphi_x)
\end{align*}
and they can be controlled thanks to Proposition \ref{sixth}.

\hfill$\Box$


\end{document}